\newcommand{\eps}{\varepsilon}
\newcommand{\ssup}[1] {{\scriptscriptstyle{({#1}})}}
\newcommand{\one}{{\mathsf 1}}
\newcommand{\R}{\mathbb R}
\newcommand{\1}{\bf{1}}
\newcommand{\E}{\mathbb E}
\renewcommand{\P}{\mathbb P}
\newcommand*{\dbar}[1]{\bar{\bar{#1}}}
\renewcommand{\phi}{\varphi}
\newcommand{\N}{\mathbb N}
\renewcommand{\P}{\mathbb P}
\newcommand{\F}{\mathfrak{F}}
\newcommand{\Tex} {T_{\rm ext}}
\newcommand{\U}{\mathcal U}
\newcommand{\St}{\mathscr S}
\newcommand{\Co}{\mathscr C}
\newcommand{\heap}[2]  {\genfrac{}{}{0pt}{}{#1}{#2}}
\newcommand{\sfrac}[2] {\mbox{$\frac{#1}{#2}$}}
\newtheorem{theorem}{Theorem}
\newtheorem{definition}{Definition}
\newtheorem{lemma}{Lemma}
\newtheorem{proposition}{Proposition}
\newtheorem{remark}{Remark}
\begin{document}

\title[The contact process on evolving scale-free networks] 
{Metastability of the contact process\\ on fast evolving scale-free networks} 

\author[Emmanuel Jacob, Amitai Linker and Peter M\"orters]{Emmanuel Jacob, Amitai Linker and Peter M\"orters}

\maketitle




\begin{quote}
{\small {\bf Abstract:} }
We study the contact process in the regime of small infection rates 
on finite scale-free networks with stationary dynamics based on simultaneous updating of all connections of a vertex. We allow the update rates of individual vertices to increase with the  strength of a vertex, leading to a fast evolution of the network. We first develop an approach for inhomogeneous networks with general kernel and then focus on two canonical cases, 
the factor kernel and the pre\-ferential attachment kernel.  For these specific networks we identify and analyse four possible strategies how the infection can survive for a long time. We show that there is fast extinction of the infection when neither of the strategies is successful, otherwise there is slow extinction and the most successful strategy determines the asymptotics of the metastable density as the infection rate goes to zero.  We identify the domains in which these strategies dominate in terms of phase diagrams for the exponent describing the decay of the metastable density.
\end{quote}

\vspace{0.2cm}

{\footnotesize
\vspace{0.2cm}
\noindent\emph{MSc Classification:} Primary 05C82; Secondary 82C22.

\noindent\emph{Keywords:}  Phase transitions, metastable density, evolving network, temporal network, dynamic network, inhomogeneous random graph, preferential attachment network, network dynamics, SIS infection.}

\setcounter{tocdepth}{1} 
\tableofcontents

\pagebreak[3]

\section{Introduction}

The spread of disease, information or opinion on networks has been one of the most studied problems in mathematical network science over the past decade. There has been tremendous progress related to a variety of spreading processes and underlying network models. For the vast majority of these studies the network has been assumed to be fixed -- at least on the time scales of the processes running on the network. Real networks however undergo change and this change is often on a similar time scale as the spreading processes running on the networks. The problem of temporal variability of the networks, and how this variability can interfere with processes on the network has received little attention so far in the mathematical literature. The aim of this paper is therefore to investigate the possible effects of stationary dynamics of a network on the spread of an infection by offering an extensive case study based on the following basic assumptions:
\begin{itemize}
\item {\bf Scale-free network model.} We look at the class of sparse \emph{inhomogeneous random graphs}. For this class vertices are labelled by indices from $\{1,\ldots, N\}$  and edges exist independently with the probability of
an edge $\{i,j\}$ given as $\frac1N \,p(i/N, j/N) \wedge 1$ for a suitable kernel $p\colon (0,1]\times(0,1] \to (0,\infty)$. We focus on two universal types of kernel, which produce scale-free networks. The factor kernel reproduces the  asymptotic connection probabilities of most standard scale-free network models without significant correlations when the vertices are ordered by decreasing strength. This includes the Chung-Lu~\cite{CL06}, Norros-Reittu~\cite{NR06} and configuration models~\cite{MR95}. The preferential attachment kernel reproduces the connection  probabilities of various  preferential attachment models~\cite{BA99, DM11}, see~\cite{vdH17} for a recent survey of static network models.\\[-2mm]

\item {\bf Fast network evolution.} We focus on \emph{fast dynamics}, 
which arise, for example, as a rough approximation of migration effects in networks where links correspond to physical proximity. In our network dynamics all edges adjacent to a vertex are updated simultaneously at a rate that may depend increasingly on the vertex strength, so that the most relevant vertices can update relatively quickly. Upon updating a vertex loses all its connections and new connections are built independently. The connection probabilities of a vertex remain the same before and after the update, so that the network evolution is stationary. 
\\[-2mm]

\item {\bf SIS type epidemic process on the network.}
We investigate the \emph{contact process}, or SIS infection. The key feature which makes this process interesting from our point of view is that in order to survive the infection travels many times along individual edges, so that temporal changes in the status of  edges become relevant for the behaviour of the infection. For the contact process on scale-free networks with the factor kernel this feature leads to different qualitative behaviour of the static model and its classical mean-field approximation, as explained in~\cite{CD09}, see also~\cite{BB+, GMT05}. It is therefore a natural question to ask how the contact process behaves for dynamic models that interpolate between the static and the mean-field case. \\[-2mm]
\end{itemize}

\pagebreak[3]

\noindent
An \emph{evolving network} is a (random) family~\smash{$(\mathscr G^{\ssup N}_t \colon t\geq0, N\in\N)$}
of  graphs, where $\mathscr G^{\ssup N}_t$ has vertex set $\{1,\ldots,N\}$.  Conditionally on this network evolution, the \emph{contact process} on  
\smash{$(\mathscr G^{\ssup N}_t \colon t\geq0)$}  is a time-inhomogeneous Markov process that can be defined as follows: Every vertex $v$ may be healthy or infected; if infected, every adjacent healthy vertex gets infected with rate~$\lambda$ up until the recovery of $v$, which happens at rate one. When a vertex recovers it is again susceptible to infection. We write $X_t(v)=1$ if the vertex  $v\in\{1,\ldots,N\}$ is infected at time~$t$ and $X_t(v)=0$ otherwise. 
\medskip

The state when every vertex is healthy is absorbing and can be reached at any time from every other state in finite time with positive probability. Hence there exists a finite time $\Tex$, called the \emph{extinction time},  which is the infimum over all times where the contact process is in the absorbing state. 
%
If the evolving network is itself a (time-homogeneous) Markov process, then $(\mathscr G^{\ssup N}_t\!\!, X_t)_{t\ge 0}$ is also a (time-homogeneous) Markov process, and we will work within this context. More precisely, the evolving network we consider is a stationary Markov process, and unless otherwise specified, we start the process with the network distributed according to the stationary measure, and the contact process with every vertex infected. 
Our interest is in the size of the extinction time in that case.
\medskip

We say that the system experiences \emph{fast extinction} if, for some sufficiently small  infection rate $\lambda>0$, the expected extinction time is bounded by a power of the network size. We say that we have  \emph{slow extinction} if, for every infection rate $\lambda>0$, the expected extinction time 
is at least exponential in the network size with high probability, more precisely there exists a positive constant~$c$ such that, uniformly in $N>0$, we have
$$\mathbb{P}(\Tex \le e^{cN}) \le e^{-c N}.$$
 \pagebreak[3]
Slow extinction is a phenomenon of \emph{metastability}, a physical system reaching its equilibrium very slowly because it spends a lot of time in states which are local energy minima, the so-called
metastable states. Metastability in our model suggests, informally, that starting from all vertices infected the density of  infected vertices is likely to decrease rapidly to a \emph{metastable density}, and stay close to this density up to the exponential survival time of the infection. Metastable densities for the contact process
have been studied in the case of static networks by Mountford et al in~\cite{MVY13}. Our interest in metastable densities stems from the fact that, when seen as a function of small $\lambda$, they reflect which is the optimal survival strategy for the infection. As we shall see, the optimal survival strategies changes as we vary the network parameters, defining phase transitions.
\medskip

To understand the mechanisms behind slow extinction we follow~\cite{BB+} and first
look at a \emph{star graph}, ie a single central vertex connected to $k$ neighbouring vertices of degree one.  
If only the centre is initially infected, at the time of its first recovery it has on average $\frac{\lambda k}{\lambda+1}\sim \lambda k$ infected neighbours. The probabiliy that none of these neighbours reinfects
the centre is therefore approximately
$$\big(\sfrac1{1+\lambda}\big)^{\lambda k} \sim e^{-\lambda^2 k}.$$
Hence the infection survives for a long time on the star graph if $k \gg \lambda^{-2}$ 
and in this case the survival time is exponential  in~$\lambda^2k$. 
\pagebreak[3]
\medskip

If now the central vertex in the star graph updates at fixed rate~$\kappa$, and upon updating is connected to $k$ uninfected vertices, at the time~$r$  of the first recovery we have on average order~$\lambda k$ infected neighbours. The probability that none of them reinfects the centre before it updates is
$$\frac{\kappa}{\lambda^2k + \kappa}$$
and in this case we call $r$ a {\sl true recovery} (as opposed to {\sl simple recovery}) since then (and only then) the infection becomes extinct. Again the infection survives 
for a long time on the star graph if $k \gg \kappa \lambda^{-2}$ but now the survival time in this
case is of order~$\lambda^2k$, ie linear in the degree of the central vertex as
opposed to exponential as in the case of the star graph without updating.
\medskip

To understand the survival of infections on a (static or evolving)  inhomogeneous random graph we classify vertices as {\sl stars} and {\sl connectors} where stars have large degree and connectors do not. Assuming that the kernel~$p$ is decreasing in every component, we use a function $a(\lambda)\downarrow0$ to perform such a classification, where the set of stars $\St$ is
$$\St:=\big\{1,\ldots, \lfloor a(\lambda) N \rfloor \big\},$$ 
and its elements have degree asymptotically bounded from below by \smash{$\int_0^1 p(a(\lambda),x)\, dx$}. We think of stars acting locally like the centres in a star graph (hence the name) with most of their neighbours in the complementary set $\Co$ of connectors. 
In particular, an individual star can hold the infection
for a long time if $$\int_0^1 p\big(a(\lambda),x\big)\, dx \gg \lambda^{-2}.$$
Slow extinction of the infection is based on a collective strategy such that, given that a positive proportion of 
vertices in the  set $\St$  is infected, up to an exponentially small error probability a positive proportion 
of vertices in $\St$ will again be infected after a time span given by the recovery cycle of the stars. 
The existence of such a strategy ensures that the infection is kept alive on $\St$ for an exponentially long time, making this set the \emph{skeleton} of the infection. To obtain the metastable density associated to any given {\sl survival strategy} we find first a maximal function $a(\lambda)\downarrow 0$ (which defines $\St$) such that the strategy holds, and  obtain the density as the number of infected direct neighbours of $\St$ divided by the total number~$N$ of vertices.
\medskip

We have identified \emph{four relevant survival strategies} for the infection:\\[-3mm]

\begin{itemize}
\item[(i)] \emph{Quick direct spreading}\\[-4mm]

\noindent
Stars directly infect sufficiently many other stars before simple recoveries, so that the infection can be kept 
alive for a long time on the subgraph of stars alone. The connectors play no role for the survival of the infection.\\[-2mm]

\item[(ii)] \emph{Delayed direct spreading}\\[-4mm]

\noindent
As described for the star graph above, in this mechanism a star can retain an infection on a longer time scale if the lower bound on its degree is of larger order than 
$\lambda^{-2}$.  Operating on this longer time-scale stars spread the infection directly to other stars and keep the infection alive.\\[-2mm]\pagebreak[3]

\item[(iii)] \emph{Quick indirect spreading}\\[-4mm]

\noindent
Stars infect a large number of their neighbours before a simple recovery, and these neighbours then pass on the infection to other stars. In this way stars indirectly infect sufficiently many other stars keeping the infection alive. \\[-2mm]

\item[(iv)] \emph{Delayed indirect spreading}\\[-4mm]

\noindent As described for the delayed direct mechanism, a star retains the infection on a longer time scale if the lower bound on its degree is of larger order than 
$\lambda^{-2}$. On this time-scale stars pass the infection to other stars via their infected neighbours, as in the quick indirect mechanism. 
\end{itemize}
\pagebreak[3]
\medskip

Assume now that \smash{$\mathscr G^{\ssup N}_0$} is an inhomogeneous random graph with a kernel~$p$ 
and suppose that in the evolving network
\smash{$(\mathscr G^{\ssup N}_t \colon t\geq 0)$} every vertex updates with rate $\kappa$ 
and upon updating it receives a new set of adjacent edges with the same probability as before, 
given by the kernel~$p$. We now formulate and explain heuristically our results for the case of updating with constant rate~$\kappa$ in the case of the factor and preferential attachment kernel. Results for more general kernel and update rules will be formulated in the next section when we present our main results. 
\medskip

Define
$$I_N(t):= \frac1N \, \E\Big[\sum_{v=1}^N X_t(v) \Big],$$ 
to be the expected density of infected vertices at time $t$. Using the self-duality of the contact process~\cite[Chapter~VI]{L85} we get
\begin{equation}\label{dual}
I_N(t)= \frac1N \, \sum_{v=1}^N \P_v\big( \Tex>t \big),
\end{equation}
where $\P_v$ refers to the contact process started with only vertex~$v$ infected.
We say that the contact process
has \emph{lower metastable density} $\rho^-(\lambda)$ and \emph{upper metastable density} $\rho^+(\lambda)$ if, whenever $t_N$ is going to infinity slower than exponentially, we have\footnote{Actually, a more precise and slightly stronger metastability definition is given in next section.}
$$ 0<\rho^-(\lambda)=\liminf_{N \to \infty} I_N(t_N) \le \limsup_{N \to \infty} I_N(t_N)=\rho^+(\lambda).$$
We say that $\xi$ is the \emph{metastability exponent} of the process if the lower and upper metastability
densities exist for sufficiently small $\lambda>0$ and satisfy
$$\xi := \lim_{\lambda\downarrow 0} \frac{\log \rho^-(\lambda)}{\log \lambda}
= \lim_{\lambda\downarrow 0} \frac{\log \rho^+(\lambda)}{\log \lambda}.$$
Loosely speaking the metastability exponent measures the rate of decay of the metastable density as the infection rate~$\lambda$ approaches the critical value zero.
\medskip

We first look at the \emph{factor kernel}
$$p(x,y)=\beta x^{-\gamma} y^{-\gamma},
\qquad \mbox{ for some $\beta>0$ and $0<\gamma<1$. }$$
It is easy to see that the inhomogeneous networks with kernel~$p$ are 
scale free with \emph{power-law exponent} $\tau=1+\frac1\gamma$.
Our first result shows that in the case of factor kernels
there are two phase transitions in the behaviour of the contact process 
with small infection rates.
\medskip

\pagebreak[3]

\begin{proposition}\label{metastability_prop}
Suppose $p$ is a factor kernel with parameter $0<\gamma<1$. \\[-2mm]

\begin{itemize}
\item[(a)] If $0<\gamma<\frac13$ we have fast extinction, and if
$\frac13<\gamma<1$ we have slow extinction. \\[-2mm]
\item[(b)]
If $\frac13<\gamma<1$ the metastability exponent exists and equals
$$\xi=\left\{\begin{array}{lcl}{\frac 2 {3\gamma-1}}&\mbox{ if }&\frac13< \gamma<\frac 23 ,\\[1mm]
{\frac \gamma {2\gamma-1}}&\mbox{ if }&\frac23<\gamma<1\end{array}\right.$$
\end{itemize}
\end{proposition}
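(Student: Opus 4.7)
The plan is to read Proposition~\ref{metastability_prop} off the four survival strategies (i)--(iv) of the introduction: for each strategy I determine the largest star-threshold $a(\lambda)\downarrow 0$ at which the strategy is self-sustaining, compute the candidate metastable density $\rho(\lambda)\asymp\lambda\,a^{1-\gamma}$, obtained as $\lambda$ times the mean total degree of $\St$ divided by $N$, and take the strategy maximising this density, with fast extinction corresponding to the regime in which no strategy is viable.

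For the factor kernel a star $i\le aN$ has mean degree of order $a^{-\gamma}$ and mean number of star neighbours of order $a^{1-2\gamma}$. The \emph{quick direct} strategy (i) demands at least one star-to-star reinfection per simple-recovery cycle, that is $\lambda a^{1-2\gamma}\gtrsim 1$; this is possible only for $\gamma>\tfrac12$ and permits $a^\star\asymp\lambda^{1/(2\gamma-1)}$, yielding density exponent $\gamma/(2\gamma-1)$. The \emph{delayed direct} strategy (ii) activates once the degree of the star exceeds $\lambda^{-2}$, i.e.\ $a\lesssim\lambda^{2/\gamma}$; during its holding time $\lambda^2 a^{-\gamma}$ the star produces of the order of $\lambda^3 a^{1-3\gamma}$ star reinfections, so the branching condition $\lambda^3 a^{1-3\gamma}\gtrsim 1$ requires $\gamma>\tfrac13$ and permits $a^\star\asymp\min(\lambda^{2/\gamma},\lambda^{3/(3\gamma-1)})$; the second argument of the minimum binds for $\tfrac13<\gamma<\tfrac23$, giving density exponent $2/(3\gamma-1)$, while the first binds for $\tfrac23<\gamma<1$, giving $(2-\gamma)/\gamma$. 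Comparing the two, for $\tfrac13<\gamma<\tfrac23$ the delayed strategy dominates with $\xi=2/(3\gamma-1)$, and for $\tfrac23<\gamma<1$ the quick strategy takes over with $\xi=\gamma/(2\gamma-1)$; the indirect strategies (iii) and (iv) are ruled out because, in the fast-evolution regime, every extra transmission through a connector (of label-dependent degree $O(1)$ and update rate $\kappa$) costs an additional factor $\lambda$ which is never recouped.

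To turn the heuristic into a proof I would establish the lower bound on $\Tex$ by a renewal argument on the stars. Fixing $a$ just below the critical value, I build a skeleton coupling on $\St$ in which one simple or true recovery cycle of each star is analysed at a time: starting from a positive proportion of infected stars, with failure probability $e^{-c|\St|}$ a positive proportion is again infected after one cycle, and iteration then yields $\Tex\ge e^{cN}$ with high probability. The delayed mechanism requires a preliminary ``updating star graph'' lemma giving the true-recovery time of an isolated central vertex of degree $k\gg\lambda^{-2}$ whose edges are resampled at rate $\kappa$, of order $\lambda^2 k/\kappa$ in line with the heuristic of the introduction. The matching upper bound on $\rho^+(\lambda)$, and fast extinction for $\gamma<\tfrac13$, come from dominating the contact process by a subcritical multitype branching process whose mean offspring is the quantity $\lambda a^{1-2\gamma}$ or $\lambda^3 a^{1-3\gamma}$ computed above and which can be shown to be subcritical for every admissible choice of $a\in(0,1)$ in the relevant regime.

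The hardest step will be to control the dependencies created by the simultaneous update of all edges at a vertex: the infection statuses of the neighbours of a single star at different times are non-trivially correlated through the shared update clock, so that a naive comparison with independent branching fails. I would handle this by a two-scale decomposition that first conditions on the update times of the stars and then uses concentration of the number of infected neighbours on the short scale, reducing the long-time behaviour of the skeleton to an approximate Markov chain on the proportion of infected stars whose drift reproduces the offspring expectations above.
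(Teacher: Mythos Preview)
Your lower-bound programme is essentially the paper's: compute, for each of the four strategies, the maximal threshold $a(\lambda)$ making the strategy self-sustaining, take the best one, and turn the heuristic into a proof by showing that the proportion of infected stars evolves like a supercritical Markov chain on $\{0,\ldots,\lfloor aN\rfloor\}$ (the paper formalises this through Lemmas~\ref{discreteMC} and~\ref{discreteMC2}). Your ``updating star graph'' lemma and the renewal argument match the paper's notions of $T$-stability, $[L,T]$-susceptibility and $[L,T]$-infectedness. One small point: in the delayed direct strategy the degree constraint $a\lesssim\lambda^{2/\gamma}$ is automatically satisfied once $\lambda^3 a^{1-3\gamma}\gtrsim 1$, so you do not need to track it separately.

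The genuine gap is the upper bound. You propose to dominate the contact process by a subcritical multitype branching process with mean offspring $\lambda a^{1-2\gamma}$ or $\lambda^3 a^{1-3\gamma}$, but this does not work: these quantities are the offspring means \emph{along the specific strategies you have identified}, and an upper bound has to rule out \emph{every} possible survival mechanism, not just the four you listed. The dependencies you flag in the last paragraph are precisely the obstruction---an infected star together with its evolving neighbourhood is a highly non-Markovian object on the natural time scale, and a branching comparison with the ``right'' offspring would already require knowing that the four strategies are exhaustive, which is circular.

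The paper's route is quite different. It first couples the contact process on the evolving graph to a dominating ``wait-and-see'' process $Y$ in which edges are either \emph{revealed} (known to be present) or \emph{unrevealed} (resampled at the next use), so that the awkward simultaneous-update correlations are absorbed into a Markovian description. It then introduces a scoring function $S(x)=T_\lambda(x)x^{-\gamma}$ and a vertex score $m_t(x)$ depending on the infection status and the number of revealed neighbours, and proves that $M_t^\delta+\delta\omega t$ is a positive supermartingale for suitable $\delta,\omega$. Optional stopping then gives both fast extinction for $\gamma<\tfrac13$ (take $a_1=0$) and the upper bound on $\rho^+(\lambda)$ for $\gamma>\tfrac13$ (take $a_1(\lambda)$ of the right order). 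The choice of $S$ is exactly what encodes, in a strategy-free way, the time a vertex can hold the infection times its capacity to transmit; your branching heuristic recovers this only along the optimal strategy, not as a uniform domination.
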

\smallskip

(a)~is the main result of Jacob and M\"orters~\cite{JM15}. 
\medskip

We argue now informally that in the regime $1/3< \gamma<2/3$ the strategy of delayed direct spreading prevails, whereas for $\gamma>2/3$ it is quick direct spreading that is most successful. For $\gamma<1/3$ none of the strategies succeed. 
\medskip

Under quick direct spreading the infection can be sustained on $\St$ if $a(\lambda)$ satisfies
$$\int_0^{a(\lambda)} \int_0^{a(\lambda)} \lambda p(x,y) \, dx \, dy \approx a(\lambda),$$
which arises from equating the initial amount of infected stars with the vertices in $\St$ infected by those stars before one unit time, which is the average time it takes to have simple recoveries. This equation yields \smash{$a(\lambda) \approx \lambda^{\frac{1}{2\gamma-1}}$} which is admissible if $\gamma>\frac12$. 
We hence get a lower bound for the lower metastable density
$$\rho^-(\lambda) 
\approx
\int_0^{a(\lambda)}\int_0^1  \lambda p(x,y) \, dx \, dy \approx  \lambda a(\lambda)^{1-\gamma}
\approx \lambda^{\frac\gamma{2\gamma-1}}.$$ 
For the delayed mechanism, on the other hand, we note that the lower bound on the expected degree of a star
is $a(\lambda)^{-\gamma}$ and hence the infection can be held at a star on a time scale of
$$T(\lambda)=\lambda^2 a(\lambda)^{-\gamma},$$
which the average time until a true recovery. Now by the same principle as in the quick mechanism $a(\lambda)$ has to satisfy
$$T(\lambda) \int_0^{a(\lambda)} \int_0^{a(\lambda)} \lambda p(x,y) \, dx \, dy \approx a(\lambda),$$
hence \smash{$a(\lambda) \approx \lambda^{\frac{3}{3\gamma-1}}$} which is admissible if $\gamma>\frac13$. This yields a lower bound of the form
$$\rho^-(\lambda) 
\approx \lambda a(\lambda)^{1-\gamma} \approx 
\lambda^{\frac2{3\gamma-1}}.$$ 
Comparing both densities, the delayed strategy therefore wins if $\frac13< \gamma< \frac23$, but if $\gamma>\frac23$ the quick strategy wins. The other two strategies we have identified turn out to be inferior in any case. If $\gamma<\frac13$ none of the strategies succeeds, ie gives an admissible value of $a(\lambda)$.
\bigskip

The situation is quite different for \emph{preferential attachment kernels} given by
$$p(x,y)=\beta (x\wedge y)^{-\gamma} (x \vee y)^{\gamma-1},
\qquad \mbox{ for some $\beta>0$ and $0<\gamma<1$. }$$
As before the networks with kernel~$p$ are easily seen to be
scale free with the same power-law exponent~\smash{$\tau=1+\frac1\gamma$.}
\medskip
\pagebreak[3]

\begin{proposition}\label{metastability_prop_2}
Suppose $p$ is a preferential attachment kernel with parameter $0<\gamma<1$. 
\begin{itemize}
\item[(a)] For all $0<\gamma<1$ there is slow extinction. \\[-2mm]
\item[(b)] The metastability exponent exists and equals
$$\xi=\left\{\begin{array}{lcl}\frac{3-2\gamma}{\gamma} &\mbox{ if }& \gamma< \frac35,\\[1mm]
{\frac{3-\gamma}{3\gamma-1}}&\mbox{ if }&\gamma> \frac35 .\end{array}\right.$$
\end{itemize}
\end{proposition}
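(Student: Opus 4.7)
\medskip

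The plan is to mirror, for the preferential attachment kernel, the heuristic programme carried out above for the factor kernel, and then to invoke the forthcoming general framework to convert the heuristics into matching rigorous bounds on the metastable density. The basic integrals that are needed are $\int_0^1 p(x,y)\,\de y\approx x^{-\gamma}$, so that a star of strength $a$ has typical degree of order $a^{-\gamma}$ as in the factor case, and $\int_0^a\!\int_0^a p(x,y)\,\de x\,\de y\approx a$. The latter immediately shows that quick direct spreading is not viable, since the balance $\lambda\int_0^a\!\int_0^a p(x,y)\,\de x\,\de y\approx a$ would force $\lambda\approx 1$. The preferential attachment kernel however concentrates edge mass on star--connector pairs, so the indirect strategies must be examined.

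\medskip

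For delayed direct spreading, with $T(\lambda)=\lambda^2 a^{-\gamma}$ as before, the balance $T(\lambda)\,\lambda\int_0^a\!\int_0^a p(x,y)\,\de x\,\de y\approx a$ yields $a(\lambda)\approx\lambda^{3/\gamma}$, which is admissible for every $\gamma\in(0,1)$, and density $\rho\approx\lambda a^{1-\gamma}\approx\lambda^{(3-2\gamma)/\gamma}$; this already establishes the slow extinction claimed in part~(a). For the indirect strategies the key quantity is the rate at which an infected star of strength~$x$ generates star--connector--star paths:
\[
\int_0^a\!\int_a^1 \lambda^2\,p(x,y)\,p(x',y)\,\de y\,\de x'\approx \lambda^2 x^{-\gamma} a^{1-\gamma}\int_a^1 y^{2\gamma-2}\,\de y,
\]
and the final integral is of order $1$ when $\gamma>\tfrac12$ and of order $a^{2\gamma-1}$ otherwise. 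In the regime $\gamma>\tfrac12$ the delayed indirect balance then gives $a(\lambda)\approx\lambda^{4/(3\gamma-1)}$ and density $\rho\approx\lambda^{(3-\gamma)/(3\gamma-1)}$, while quick indirect spreading and, for $\gamma\le\tfrac12$, delayed indirect spreading yield strictly larger exponents. Comparing $(3-2\gamma)/\gamma$ with $(3-\gamma)/(3\gamma-1)$, which agree precisely at $\gamma=\tfrac35$, then identifies the dominant strategy in each regime and yields the claimed value of $\xi$.

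\medskip

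The hard part is converting these heuristics into matching rigorous bounds. For the lower bound one would build a coarse-grained block process on the star set that implements the relevant strategy and dominate it by a supercritical branching process; the main obstacle is the indirect mechanism, where the intermediate connector couples the two halves of each infection path, so a sprinkling or truncation argument restricting to connectors of strength bounded away from the star scale will be needed to decouple the two steps. For the upper bound the delicate point is to show, uniformly in the choice of the star threshold $a(\lambda)$, that no hybrid mechanism can outperform the four strategies listed above.
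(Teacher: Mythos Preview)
Your heuristic computations are correct and match the paper's own informal discussion following the proposition: delayed direct spreading gives $a(\lambda)\approx\lambda^{3/\gamma}$ and exponent $(3-2\gamma)/\gamma$, delayed indirect spreading (for $\gamma>\tfrac12$) gives $a(\lambda)\approx\lambda^{4/(3\gamma-1)}$ and exponent $(3-\gamma)/(3\gamma-1)$, and the crossover is at $\gamma=\tfrac35$. The paper indeed derives Proposition~2 as the special case $\eta=0$ of its general Theorem~\ref{teofinal}, so deferring to ``the forthcoming general framework'' is also what the paper does.

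The gap is in your last paragraph. Your sketch of the rigorous lower bound (block process dominated by a supercritical branching process, with sprinkling to decouple the two legs of indirect paths) is in the right spirit but does not match the paper's machinery: the paper tracks the number $S_k$ of infected stars at multiples of a time scale $T=T(a,\lambda)$, proves delicate properties ($T$-stability, $[L,T]$-susceptibility, $T$-infectiousness) that let a star retain and transmit the infection over this scale, and then feeds these into a tailored Markov-chain lemma (Lemma~\ref{discreteMC2}) rather than a branching comparison. More seriously, your upper bound sketch is essentially empty. The paper's method is not a strategy-by-strategy exclusion; it couples the contact process to a ``wait-and-see'' process and builds a supermartingale out of a scoring function $S$ on $(0,1]$ satisfying an integral inequality of Fredholm type. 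For the preferential attachment kernel the right choice,
\[
S(x)=T_\lambda(x)\bigl(x^{\gamma-1}+\rho\lambda\,x^{-\gamma}\bigr)\quad(\gamma>\tfrac12),\qquad
S(x)=T_\lambda(x)\bigl(x^{-\gamma}+\lambda\,x^{\gamma-1}\bigr)\quad(\gamma<\tfrac12),
\]
is far from obvious and the verification that it yields the matching $a_1(\lambda)$ is the most delicate analytic work in the paper. Nothing in your outline anticipates this technique, so as it stands the upper bound---and hence the existence of the metastability exponent---is not proved.
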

\smallskip

We now explain heuristically that in the regime $\gamma<3/5$ the strategy of delayed direct spreading prevails, whereas for $\gamma>3/5$ it is delayed indirect spreading that is most successful. 
\medskip

For delayed direct spreading  $a(\lambda)$ again has to satisfy
$$T(\lambda) \int_0^{a(\lambda)} \int_0^{a(\lambda)} \lambda p(x,y) \, dx \, dy \approx a(\lambda),$$
for the time scale $T(\lambda)=\lambda^2 a(\lambda)^{-\gamma}.$
For the preferential attachment kernel this  gives \smash{$a(\lambda) \approx \lambda^{3/\gamma}$}, 
which is always admissible. This mechanism then yields
$$\rho^-(\lambda) 
\approx \lambda a(\lambda)^{1-\gamma} \approx 
\lambda^{\frac{3-2\gamma}{\gamma}}.$$ 
For the indirect mechanism the equation for $a(\lambda)$ changes to
$$T(\lambda) \int_0^{a(\lambda)} \!\!\int_{a(\lambda)}^1 \,\int_0^{a(\lambda)} \lambda^2 p(x,y)p(y,z) \, dx \, dy \, dz \approx a(\lambda),$$
where the term on the left represents the amount of stars infected by connectors that where in turn infected by the initially infected stars in a time-scale of order $T(\lambda)$. This gives \smash{$ a(\lambda) \approx\lambda^{\frac4{3\gamma-1}}$} for $\gamma>1/2$, which is admissible 
and yields 
$$\rho^-(\lambda) 
\approx \lambda a(\lambda)^{1-\gamma} \approx 
\lambda^{\frac{3-\gamma}{3\gamma-1}}.$$ 
Comparing once again the resulting densities, the indirect strategy therefore wins if $\gamma>3/5$, otherwise the direct strategy wins.
\pagebreak[3]\medskip

To better understand the metastability phenomenon and explore the full range of possible optimal strategies we move in the next section to a model where update rates can depend on the vertex strength. A rich and beautiful picture emerges from this.

\section{Statement of the main results}

Recall that for 
$N\in\N$ the inhomogeneous random graph~$\mathscr G^{\ssup N}$ 
has vertex set~$\{1,\ldots,N\}$ and every edge $\{i,j\}$ exists independently with probability
$$p_{i,j}:=\sfrac{1}{N} \, p\big(\sfrac{i}{N},\sfrac{j}{N}\big) \wedge 1,$$
where $p\colon (0,1]\times(0,1]\rightarrow (0,\infty)$ is a kernel for which we make the following assumptions:
\begin{enumerate}
\item $p$ is symmetric, continuous and decreasing in both parameters,
\item there is some $\gamma\in(0,1)$ and constants $0<c_1<c_2$ such that for all $a\in(0,1)$,
\begin{equation}\label{condp}
c_1 a^{-\gamma}\le p(a,1) \le \int_0^1 p(a,s) ds<c_2 a^{-\gamma}.
\end{equation}
\end{enumerate}
Observe that for every $f\colon(0,1] \to(0,\infty)$ decreasing, continuous and integrable, the kernel $p(s,t)= (s\wedge t)^{-\gamma} f(s\vee t)$ satisfies conditions (1) and (2). The choices $f(x)= \beta x^{-\gamma}$ and $f(x)=\beta x^{\gamma -1}$ give the factor and preferential attachment kernels, respectively.\medskip

We take $\mathscr G^{\ssup N}_0=\mathscr G^{\ssup N}$ and
obtain the evolving network~$(\mathscr G^{\ssup N}_t \colon t\geq 0)$ using the following dynamics:
Each vertex~$i$ updates independently with rate 
$$\kappa_i\;=\;\kappa_0\left(\frac{N}{i}\right)^{\gamma\eta}
\qquad \mbox{ for } i\in\{1,\dots,N\},$$
where $\eta\in\R$ and $\kappa_0>0$ are fixed constants. When vertex~$i$ updates, 
every unordered pair $\{i,j\}$, for $j\not= i$ forms an edge with probability $p_{i,j}$,
independently of its previous state and of all other edges. The remaining edges $\{k,l\}$ with $k, l\neq i$ remain unchanged. 
\medskip

Observe that this evolution is stationary. 
The expected degree of vertex~$i$ does not depend on time and is of order $(N/i)^{\gamma}$ so $\kappa_i$ is proportional to its degree raised to the power~$\eta$. If $\eta>0$ powerful vertices update more quickly and as $\eta$ passes from zero to $\infty$ we interpolate between the evolving networks with fixed update rates and the mean field model in which no memory of edges present is retained. We call this a \emph{fast evolving dynamics}. Conversely  if $\eta<0$ powerful vertices update slowly and we can consider the connection between them as fixed during long periods of time. As $\eta$ passes to $-\infty$ we interpolate between evolving networks with fixed update rates and the static model. In this work we  only consider the fast evolving case~$\eta\geq0$ 
as the slowly evolving case requires additional techniques.
\medskip

We say the contact process on the evolving graphs $(\mathscr G_t^{(N)})$ exhibits
\begin{itemize}
\item \emph{metastability} if there there exists $\eps>0$ such that
\begin{itemize}
\item whenever $t_N$ is going to infinity slower than $e^{\eps N}$, we have
$$ \liminf_{N\to\infty} I_N(t_N)>0.$$
\item whenever $s_N$ and $t_N$ are going to infinity slower than $e^{\eps N}$, we have 
$$I_N(s_N)-I_N(t_N) \underset{N\to \infty} \longrightarrow 0.$$
\end{itemize}
In that case, we can unambiguously define the \emph{lower metastable density} $\rho^-(\lambda)=\liminf I_N(t_N)>0$ and the \emph{upper metastable density} $\rho^+(\lambda)  =\limsup I_N(t_N)$.\\[-2mm]
\item \emph{a metastable density} $\rho(\lambda)$ if there is metastability and $\rho^-(\lambda)=\rho^+(\lambda)=\rho(\lambda)$. Equivalently, whenever $t_N$ is going to infinity slower than $e^{\eps N}$, we have
$$ \lim I_N(t_N)=\rho(\lambda)>0.$$
\end{itemize}

%

\pagebreak[3]

In the following theorem we  identify conditions on the kernel~$p$ for the four survival strategies identified in the first section to successfully sustain the infection. We deduce slow extinction and metastability and derive lower bounds on the lower metastable densities in each case. We also believe that there is a metastable density as soon as there is slow extinction, but we do not prove this.

\begin{theorem}\label{teolower} 
Define $\theta = \exp(- 2(1+\kappa_0 2^{\gamma \eta}))$. 
For $a,\lambda>0$ define $T=T(a,\lambda)$ by
\begin{equation}\label{T(a,lambda)}
T\log^2(T)=\sfrac{c_1 \theta}{20 \kappa_0^2}\, \lambda^2 a^{-\gamma(1-2\eta)}, 
\end{equation}
where $c_1$ is as in~\eqref{condp}. There exist positive and finite universal constants $M_{(i)}$, $M_{(ii)}$, $M_{(iii)}$, $M_{(iv)}$, such that slow extinction and metastability for all $\lambda \in (0,1)$ are guaranteed as soon as one can find $a=a(\lambda) \in (0,1/2)$ satisfying at least one of the following conditions:
\begin{itemize}
\item[(i)] {\bf (Quick Direct Spreading)}
$$\lambda ap(a,a)>M_{(i)}.$$

\item[(ii)] {\bf (Quick Indirect Spreading)}
\begin{equation}\;\lambda^2a p(a,1)^2>M_{(ii)}.\notag
\end{equation}

\item[(iii)] {\bf (Delayed Direct Spreading)} 
\begin{equation}T(a,\lambda)>M_{(iii)}\;\;\mbox{ and }\;\;\lambda a T(a,\lambda) \, p(a,a)>M_{(iii)}.\notag
\end{equation}

\item[(iv)] {\bf (Delayed Indirect Spreading)} 
\begin{equation}
T(a, \lambda) >M_{(iv)}\;\;\mbox{ and }\;\;\;\lambda^2 a T(a, \lambda) p(a,1)^2>M_{(iv)}.\notag
\end{equation}
\end{itemize}
Moreover, in each of these cases we have 
\begin{equation}
\label{lowdensityetapos}
\rho^-(\lambda)\;\geq\;c'(\lambda a p(a,1)\wedge 1),
\end{equation}
where $c'>0$ is a universal constant (independent of $\lambda$).
\end{theorem}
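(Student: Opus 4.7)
The plan is to show that, assuming one of (i)--(iv) holds for some $a=a(\lambda)\in(0,1/2)$, a positive fraction of the skeleton $\St=\{1,\ldots,\lfloor aN\rfloor\}$ can be kept infected through renewal cycles for time exponential in $N$, implying both metastability and the density bound \eqref{lowdensityetapos}. The first step is a single-star analysis: for $v\in\St$ the expected degree $d_v=\sum_{j\neq v}p_{v,j}$ satisfies $d_v\asymp a^{-\gamma}$ by \eqref{condp}, and the update rate is $\kappa_v\asymp \kappa_0 a^{-\gamma\eta}$. Elaborating the star-graph heuristic from the introduction, I would prove that a freshly infected star survives until a genuine \emph{true recovery} for a time that stochastically dominates an exponential of mean of order $T(a,\lambda)$, up to a multiplicative constant involving only $\theta$. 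In the quick regimes (i)--(ii) one then works with cycles of constant length, while in the delayed regimes (iii)--(iv) one works with cycles of length $\Theta(T)$, and the hypotheses $T>M_{(iii)}$ and $T>M_{(iv)}$ guarantee that these cycles are long enough for the comparison to be meaningful.

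The second step is the spreading analysis. Conditioning on at least $\alpha aN$ stars being infected at the start of a cycle, for each such star $v$ I would lower bound, via star-graph couplings and first-moment calculations against the kernel, the probability that $v$ transmits the infection to a given target star $w\in\St$ during the cycle, either directly across the edge $\{v,w\}$ or indirectly through a connector $u\in\Co$ that $v$ infects. Summing over targets $w$ yields expected numbers of reinfections respectively of order $\lambda a p(a,a)$, $\lambda^2 a p(a,1)^2$, $T\lambda a p(a,a)$ and $T\lambda^2 a p(a,1)^2$ in cases (i)--(iv); each hypothesis then forces the corresponding expectation to exceed the universal constant $M_{(\cdot)}$, which I would choose large enough to dominate all absolute constants hidden in the comparisons.

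The main obstacle is to upgrade these first-moment estimates to a propagation statement with failure probability at most $e^{-caN}$. I would handle the joint dependencies by (a) conditioning on the trajectories of the initially infected stars and of their updates, and sparsifying the set of potential targets so that after this conditioning the reinfection indicators at distinct targets become conditionally independent; (b) applying a Chernoff bound to the resulting sum of $\Theta(aN)$ indicators to conclude that at least $\alpha aN/2$ stars are reinfected at the end of the cycle, with failure probability at most $e^{-caN}$. In the indirect cases (ii) and (iv) extra care is needed to decorrelate the two-step paths $v\to u\to w$; restricting to paths through connectors touched by a single $v$ in the cycle achieves this at a constant cost in the first moment. Iterating this renewal over $e^{c'N}$ cycles and taking a union bound gives $\mathbb{P}(\Tex\leq e^{cN})\leq e^{-cN}$, establishing slow extinction and the first part of the metastability definition.

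Finally, for the density bound, once at least $\alpha aN$ stars are infected, each such star has expected number of connector neighbours $\gtrsim p(a,1)$, and a two-vertex coupling shows that each such neighbour is infected at the observation time with probability $\gtrsim \lambda$. Summing the corresponding Bernoulli variables and applying Bernstein's inequality yields $\sum_v X_t(v)\gtrsim (\lambda a p(a,1)\wedge 1)N$ with probability tending to one, hence $\rho^-(\lambda)\ge c'(\lambda a p(a,1)\wedge 1)$, which is \eqref{lowdensityetapos}. The remaining condition $I_N(s_N)-I_N(t_N)\to 0$ then follows from the duality identity \eqref{dual} combined with stationarity of the network dynamics, because on the event of slow extinction the probabilities $\mathbb{P}_v(\Tex>s_N)$ and $\mathbb{P}_v(\Tex>t_N)$ coincide up to $o(1)$ uniformly over $s_N,t_N$ below $e^{\eps N}$.
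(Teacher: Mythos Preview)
Your overall architecture matches the paper's: partition into stars and connectors, run renewal cycles of constant length (quick strategies) or length $\Theta(T)$ (delayed strategies), show a positive fraction of $\St$ persists via a binomial/Chernoff argument, and read off the density bound from the neighbours of infected stars. The paper organises this via two abstract lemmas about Markov chains $(M_k)$ with binomial one-step transitions: if the product of the success probabilities exceeds $1$, then (a) the chain reaches level $rn$ with probability at least $1-e^{-lM_0}$ from any starting point, and (b) once there it stays above $rn$ for time $e^{\eps n}$. Part~(b) is essentially your Chernoff iteration. Part~(a) is proved by showing $e^{-lM_{k\wedge K}}$ is a supermartingale, and this is the piece your proposal is missing.

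The concrete gap is in your last paragraph. Writing $I_N(t)-I_N(t_N)=\frac1N\sum_v \P_v(t<\Tex<t_N)$ via duality is correct, but the assertion that these probabilities are $o(1)$ ``on the event of slow extinction'' is not justified by what you have done: slow extinction was proved only from the fully infected start, not from a single vertex $v$. What is actually needed is a dichotomy for $\P_v$: either the infection dies quickly, or the number of infected stars climbs to level $raN$ and then survives exponentially long. The paper obtains this via a three-step argument, the nontrivial step being that once $S_k\ge n$ for some $k$, one reaches $S_k\ge raN$ with probability $\to 1$ as $n\to\infty$ uniformly in $N$ and in the configuration at time $k$. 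Your Chernoff bound only controls fluctuations of order $\sqrt{S_k}$ and does not by itself give this subcritical-to-supercritical climb from bounded $n$; you need a branching/supermartingale argument of the type above.

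Two further points of comparison. First, your claim that a freshly infected star's survival time stochastically dominates an exponential of mean $\Theta(T)$ is stronger than what the paper proves and harder to establish cleanly in the evolving-network setting; the paper instead introduces deterministic ``$T$-stable'' and ``$[L,T]$-susceptible'' conditions on $(\mathcal U^x,\mathcal R^x,\mathcal C^x)$, shows these hold with fixed probability $q_1$, and then proves the star is infected at a grid of checkpoint times with probability $q_2$. This is weaker but suffices for the binomial comparison. Second, the independence you need across targets is obtained in the paper not by sparsifying the target set but by forcing an update of the relevant vertex in a preliminary subinterval, so that the edge status at the time of the first potential infection is independent of the time-$k$ network. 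Your conditioning idea may work, but the update trick is what makes the conditional law depend only on $\St_k$ and not on $\mathscr G_k^{\ssup N}$.
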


\bigskip
While the lower bounds above can be verified by investigating each of the four explicit survival strategies separately, upper bounds require a general, more implicit, method that yields information independent of any chosen strategy. Our approach is a supermartingale technique which gives upper bounds based on the choice of a scoring function. By carefully selecting a proper scoring function 
the technique will produce upper bounds which match the lower bounds in each of the cases investigated here. 

\bigskip

\begin{theorem}
\label{teoupper}
Fix $\lambda>0$ small and define the time-scale function $T_{\lambda}\colon (0,1)\rightarrow (0,\infty)$ as $$T_{\lambda}(x)=\max\big\{\lambda^{2}x^{-\gamma(1-2\eta)},1\big\}.$$
Fix $D>0$ such that
\begin{equation}
\label{condD}
D\;\leq\;\min\left\{\frac{\kappa_0}{4},\frac{\kappa_0^2(1-\gamma)}{64\beta},\frac{1}{16}\right\}.
\end{equation}
\begin{enumerate}
\item Suppose there is some function $S\colon (0,1]\rightarrow(0,\infty)$ with  $\int_0^1S(x)\, dx<\infty$ and
\begin{equation}
\label{defiT}
T_{\lambda}(x)\leq\mathfrak{c}S(x)^{\delta},
\end{equation}
for some $\mathfrak{c}>0$ and $0<\delta<1$. Suppose further that, for $N$ sufficiently large, 
\begin{equation}\label{condS1}
\lambda T_{\lambda}\left(\sfrac{x}{N}\right)\sum_{y=1}^{N} p_{x,y}\, S\left(\sfrac{y}{N}\right)\;\leq\;DS\left(\sfrac{x}{N}\right) \quad \mbox{ for all~$x\in\{1,\dots,N\}$.}
\end{equation}
Then, for sufficiently small $\lambda>0$, there is some $\omega'=\omega'(\lambda)$ such that, for 
large~$N$,
\begin{equation}\label{resextime}
\E\big[T_{\rm ext}\big]\;\leq\; \omega' N^{\delta}, \notag
\end{equation}
and in particular there is fast extinction.\\

\item If there exists some $a=a(\lambda)>0$ and some continuous function $S:[a,1]\rightarrow (0,\infty)$ such that, for all $N$ large,
\begin{equation}\label{condS2}
\lambda T_{\lambda}\left(\sfrac{x}{N}\right)\sum_{y=1}^{N} p_{x,y}\, S\left(\sfrac{y\vee \lceil aN\rceil}{N}\right)\;\leq\;DS\left(\sfrac{x}{N}\right) \mbox{ for all $x\in\{\lfloor aN\rfloor+1,\dots,N\}$, }
\end{equation}
then there exists $\omega>0$ 
and a function $\eps=\eps(N)$ converging to 0 as $N\uparrow\infty$ such that, for all $N$, $\delta>0$ 
and all $t\ge0$, we have
\begin{equation}
\label{resdens}
I_N(t)\;\leq\;a+\frac{1}{S(a)}\int_a^1 S(y)\, dy + \frac 1 {\delta \omega t} \int_a^1 S(y)^\delta\, dy+\eps(N).
\end{equation}
In particular, if there is metastability, then the upper metastable density satisfies
\begin{equation}
\label{upperdens}
 \rho^+(\lambda) \le a(\lambda)+\frac{1}{S(a(\lambda))}\int_{a(\lambda)}^1 S(y)\, dy.
\end{equation}
\end{enumerate}
\end{theorem}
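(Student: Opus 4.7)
The proof should be based on a supermartingale method in which each infected vertex carries a score $S(v/N)$. For part~(2) I would introduce the potential
$$\Phi_t\;:=\;\sum_{v=\lfloor aN\rfloor+1}^N X_t(v)\,S(v/N),$$
and for part~(1) the analogous potential summed over all vertices $v\in\{1,\ldots,N\}$. On a fixed realisation of the graph the generator of $\Phi_t$ loses $X_t(v)\,S(v/N)$ upon the recovery of~$v$ and gains at most $\lambda\,p_{v,w}S(w/N)$ upon each infection along an edge~$\{v,w\}$. The naive instantaneous supermartingale condition $\lambda\sum_w p_{v,w}S(w/N)\le S(v/N)$ is strictly stronger than~\eqref{condS1}--\eqref{condS2}, which are weaker by the factor $T_\lambda(v/N)$; this factor reflects the star-graph heuristic in the introduction, by which a strong vertex holds the infection for a time of order $T_\lambda(v/N)$ through re-infection before a true recovery.

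The core of the proof is to close this gap by time-averaging over windows of length $T_\lambda(v/N)$. The update rates $\kappa_v$ scaling like $(N/v)^{\gamma\eta}$ are chosen precisely so that such a window contains many updates of~$v$, fully refreshing its neighbourhood, and the expected number of \emph{true} recoveries per window is of order one rather than $T_\lambda(v/N)$. I would implement this either by coupling~$v$ to an auxiliary process with a true-recovery clock of rate $T_\lambda(v/N)^{-1}$, or by a direct star-graph computation bounding the ratio of simple to true recoveries, leading to an effective drift inequality of the form
$$\mathcal L_{\rm eff}\Phi_t\;\leq\;(D-1)\sum_v X_t(v)\,\frac{S(v/N)}{T_\lambda(v/N)}\;\leq\;0,$$
where the constant $1-D$ together with the quantitative gap~\eqref{condD} absorb the various error terms. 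This averaging step is where I expect the main technical obstacle to sit: one must control short-time fluctuations in the graph dynamics, deal with the $(1-X_t(w))$ factor ignored in the naive bound, and propagate estimates uniformly in the configuration. The hypothesis $\eta\ge0$ is used precisely to ensure that stronger vertices update fast enough for this averaging to succeed.

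Granted the (approximate) supermartingale, part~(2) follows by optional stopping: $\mathbb E[\Phi_t]\le\mathbb E[\Phi_0]\le N\int_a^1 S(y)\,dy+o(N)$ by a Riemann-sum estimate starting from all vertices infected. Since~\eqref{upperdens} is trivial unless $S$ may be taken non-decreasing on~$[a,1]$, we obtain $\sum_{v>aN}X_t(v)\le \Phi_t/S(a)$, and adding the trivial contribution~$a$ from the stars produces~\eqref{upperdens}. The transient correction $\tfrac{1}{\delta\omega t}\int_a^1 S(y)^\delta\, dy$ in~\eqref{resdens} should come from a refined $L^\delta$ estimate: applying the supermartingale reasoning to $\Phi^\delta$ generates, through concavity, an additional integrated negative drift which, when averaged over $[0,t]$, yields the $1/t$ correction. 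For part~(1), the hypothesis $T_\lambda\le\mathfrak cS^\delta$ converts the drift inequality into one of the form $\mathcal L\Phi\le -c'\Phi^{1-\delta}$ after a Jensen-type manipulation; integrating this ODE-type bound from $\Phi_0=O(N)$ down to extinction gives $\mathbb E[\Tex]=O(N^\delta)$, which is precisely~\eqref{resextime}.
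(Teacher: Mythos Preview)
Your overall strategy---a weighted potential plus optional stopping---is right, but two concrete steps do not go through as stated.

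\textbf{The time-averaging gap.} You correctly identify that the naive drift of $\Phi_t=\sum X_t(v)S(v/N)$ misses the factor $T_\lambda(v/N)$, and you propose to recover it by averaging over windows of that length. The paper does \emph{not} do this, and I do not see how to make your averaging rigorous: you would need to control the joint law of recoveries, updates and re-infections over a macroscopic window, uniformly in the environment, which is exactly the hard part. The paper's device is different and much cleaner. It first couples $X$ to a dominating ``wait-and-see'' process $Y$ that, in addition to infection status, tracks which edges are \emph{revealed} (an edge becomes revealed when an infection crosses it, and unrevealed again at the next update of an endpoint). The score $m_t(x)$ then depends on both $Y_t(x)$ and the number $N_t(x)$ of revealed neighbours, interpolating between $0$ and $s(x)+t(x)$ where $t(x)\asymp s(x)/(T_\lambda(x/N)\kappa_x)$. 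The $T_\lambda$ factor enters through $t(x)$, not through time-averaging: a healthy vertex with many revealed neighbours already carries almost full score, so its re-infection contributes little drift, and the genuine negative drift comes from updates at rate $\kappa_x$ wiping out revealed edges. A direct case analysis on $(Y_t(x),N_t(x))$ gives $\frac{1}{dt}\E[dM_t\mid\mathcal F_t]\le -\omega M_t^{1-\delta}$ instantaneously, with no window needed.

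\textbf{The density bound in part (2).} Your plan is to start from everything infected, bound $\E[\Phi_t]\le \E[\Phi_0]$, and then divide by $S(a)$, which requires $S$ non-decreasing. In every application $S$ is \emph{decreasing} (e.g.\ $S(x)=T_\lambda(x)x^{-\gamma}$ for the factor kernel), so this step fails. The paper instead uses self-duality to write $I_N(t)=\frac1N\sum_x \P_x(T_{\rm ext}>t)$ and analyses the process started from a \emph{single} vertex $x>aN$. With $T_{\rm hit}$ the first time a star is infected and $T=T_{\rm ext}\wedge T_{\rm hit}$, optional stopping for the stopped supermartingale $M_{t\wedge T}$ gives $\P_x(T_{\rm hit}<T_{\rm ext})\le s(x)/s(\lceil aN\rceil)$ (here $M_0=s(x)$ and $M_{T_{\rm hit}}\ge s(\lceil aN\rceil)$), while optional stopping for $Z_{t\wedge T}=M_{t\wedge T}^\delta+\delta\omega(t\wedge T)$ gives $\P_x(t<T)\le s(x)^\delta/(\delta\omega t)$. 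Summing over $x$ and converting to integrals yields \eqref{resdens}. So the $1/S(a)$ factor arises from a hitting-probability estimate, not from a pointwise lower bound on $S$.
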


\medskip
Applying these two theorems to the kernels considered yields our main result.

\begin{theorem}
\label{teofinal}\ \\[-4mm]
\begin{itemize}
\item[(a)]
Suppose $p$ is the factor kernel. 
\begin{itemize}
\item[(i)] If $\eta<\frac12$ and \smash{$\gamma<\frac{1}{3-2\eta}$}, or if $\eta\ge \frac12$ and \smash{$\gamma<\frac12$}, there is fast extinction.
\item[(ii)]
 if $\eta<\frac12$ and \smash{$\gamma>\frac{1}{3-2\eta}$}, or if $\eta\ge \frac12$ and \smash{$\gamma>\frac12$}, there is slow extinction and metastability, and the metastability exponent satisfies
\begin{equation}
\label{dens1}
\xi \:=\;\left\{\begin{array}{cc}\frac{2-2\gamma\eta}{3\gamma-2\gamma\eta-1}&\mbox{ if }\;\gamma<\frac{2}{3+2\eta},\\[-2mm]
&\\\frac{\gamma}{2\gamma-1}&\mbox{ if }\;\gamma>\frac{2}{3+2\eta}.\end{array}\right.
\end{equation}\end{itemize}
\medskip

\item[(b)] Suppose $p$ is the preferential attachment kernel.  
\begin{itemize}
\item[(i)] 
If $\eta\ge \frac12$ and \smash{$\gamma<\frac12$}, there is fast extinction.
\item[(ii)]
If $\eta<\frac12$, or if $\eta\ge \frac12$ and \smash{$\gamma>\frac12$}, there is slow extinction and metastability, and the metastability exponent satisfies
\begin{equation}
\label{dens2}
\xi\:=\;\left\{\begin{array}{ccl}\frac{3-2\gamma-2\gamma\eta}{\gamma-2\gamma\eta}&\mbox{ if }& \eta<\frac12\ \mbox{ and }\ 0<\gamma<\frac{3}{5+2\eta},\\[-2mm]&\\\frac{3-\gamma-2\gamma\eta}{3\gamma-2\gamma\eta -1}&\mbox{ if }&\eta<\frac12\ \mbox{ and }\ \frac{3}{5+2\eta}<\gamma<\frac{1}{1+2\eta},\\[-2mm]&\\ \frac{1}{2\gamma-1}&\mbox{ if }&\frac{1}{1+2\eta}<\gamma. \end{array}\right.
\end{equation}
\end{itemize}
\end{itemize}
\end{theorem}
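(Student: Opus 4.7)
The proof of Theorem~\ref{teofinal} is a case-by-case application of Theorems~\ref{teolower} (lower bounds on $\rho^-(\lambda)$) and~\ref{teoupper} (matching upper bounds and fast-extinction statements), with explicit computations for the factor and preferential attachment kernels. For each kernel and each $(\gamma,\eta)$ regime I first identify which of the four survival strategies of Theorem~\ref{teolower} is admissible and yields the largest $\rho^-$, then construct a scoring function of power-law type $S(x)=x^{-\alpha}$ whose output from Theorem~\ref{teoupper} matches this lower bound.

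For the lower bounds I evaluate the admissibility thresholds explicitly. For the factor kernel, $p(a,a)/p(a,1)=a^{-\gamma}$ is large, so the direct strategies (i) and (iii) dominate the indirect ones. Strategy (i) is admissible precisely when $\gamma>1/2$ and yields $a(\lambda)\sim\lambda^{1/(2\gamma-1)}$; strategy (iii) requires $\eta<1/2$ so that $T(a,\lambda)\sim\lambda^{2}a^{-\gamma(1-2\eta)}$ can diverge as $a\to 0$, and its second condition then forces $\gamma>1/(3-2\eta)$, giving $a(\lambda)\sim\lambda^{3/(3\gamma-2\gamma\eta-1)}$. Inserting each into $\rho^-(\lambda)\geq c'\lambda a\,p(a,1)\sim\lambda a^{1-\gamma}$ and taking the better exponent produces the phase diagram of part~(a) with crossover at $\gamma=2/(3+2\eta)$. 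For the preferential attachment kernel, $p(a,a)=\beta a^{-1}$ kills strategy (i) since $\lambda a p(a,a)=\lambda\beta$ no longer depends on $a$, so the indirect strategies lead: strategy (iii) is admissible for all $\eta<1/2$; strategy (iv) additionally requires $\gamma>1/(3-2\eta)$; and strategy (ii) is admissible when $\gamma>1/(1+2\eta)$. Comparing the three resulting densities produces the two crossovers at $\gamma=3/(5+2\eta)$ and $\gamma=1/(1+2\eta)$ of~\eqref{dens2}.

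For the upper bounds I apply Theorem~\ref{teoupper}(2) with $S(y)=y^{-\alpha}$ and $a=a(\lambda)$ equal to the optimal value from the winning strategy. The sum on the left of~\eqref{condS2} is replaced by an integral $\int_0^1 p(x/N,y)(y\vee a)^{-\alpha}\,dy$, which splits into contributions from $y<a$ (where $(y\vee a)^{-\alpha}=a^{-\alpha}$) and $y>a$ (power-law). The time-scale function $T_\lambda(x/N)=\max\{\lambda^{2}(x/N)^{-\gamma(1-2\eta)},1\}$ further splits the range of $x$ into a high-$x$ region where $T_\lambda=1$ and a low-$x$ region where it is a power; $\alpha$ must be chosen so that~\eqref{condS2} holds on both sub-regions simultaneously, after which explicit evaluation of $a+S(a)^{-1}\int_a^1 S(y)\,dy$ via~\eqref{upperdens} matches the lower bound up to constants. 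For the fast-extinction regimes in (a)(i) and (b)(i) I instead use Theorem~\ref{teoupper}(1), choosing $\alpha<1$ so that $\int_0^1 S<\infty$ and $\delta<1$ so that~\eqref{defiT} and~\eqref{condS1} hold; the feasibility of such a pair $(\alpha,\delta)$ turns out to be equivalent to the stated parameter range. The main obstacle is the middle PA regime $3/(5+2\eta)<\gamma<1/(1+2\eta)$, where the optimal mechanism is delayed indirect spreading: the piecewise structure of the PA kernel forces one to estimate $\int p(x,y)(y\vee a)^{-\alpha}\,dy$ on the three sub-regions $y<a$, $a<y<x$ and $y>x$ separately, and $\alpha$ has to be calibrated so that the middle contribution (representing the star--connector--star two-hop path) balances against $T_\lambda(x/N)S(x/N)$ without demanding that $\int_0^1 S$ be finite.
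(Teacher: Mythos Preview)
Your lower-bound analysis follows the paper's approach exactly: for each kernel, identify which strategy in Theorem~\ref{teolower} yields the largest admissible $a(\lambda)$, then read off $\rho^-(\lambda)\gtrsim\lambda a(\lambda)^{1-\gamma}$ from~\eqref{lowdensityetapos}. One small correction: for the preferential attachment kernel, quick indirect spreading (ii) is admissible as soon as $\gamma>\tfrac12$ (since $\lambda^2 a\,p(a,1)^2=\lambda^2 a^{1-2\gamma}$); the threshold $\gamma=\tfrac1{1+2\eta}$ is where (ii) begins to \emph{dominate} the delayed strategies, not where it first becomes available.

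Your upper-bound approach, however, differs substantially from the paper's. You propose a pure power-law scoring function $S(x)=x^{-\alpha}$ throughout, with $\alpha$ tuned per regime. The paper instead uses $\lambda$-dependent scoring functions: $S(x)=T_\lambda(x)\,x^{-\gamma}$ for the factor kernel, and for the preferential attachment kernel a two-term ansatz $S(x)=T_\lambda(x)\bigl(x^{\gamma-1}+\rho\lambda x^{-\gamma}\bigr)$ when $\gamma>\tfrac12$, or $S(x)=T_\lambda(x)\bigl(x^{-\gamma}+\lambda x^{\gamma-1}\bigr)$ when $\gamma<\tfrac12$ and $\eta<\tfrac12$. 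These are crude approximations to the eigenfunctions of the Fredholm equation~\eqref{freddie}; the factor $T_\lambda(x)$ encodes the survival timescale of a vertex, and the $\lambda$-weighted sum of two powers encodes the direct and indirect channels by which a star passes the infection. With these choices~\eqref{int_condS2} reduces cleanly to the right constraint on $a$.

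For the factor kernel a single power law can in fact be made to work, because the left side of~\eqref{int_condS2} factorises as $x^{-\gamma}T_\lambda(x)$ times an $x$-independent integral; likewise in the fast-extinction regime of the preferential attachment kernel the paper itself uses $S(x)=x^{-\gamma'}$. But in the slow-extinction regimes of the preferential attachment kernel your sketch is incomplete: you do not exhibit $\alpha$, and verifying that \emph{some} $\alpha$ simultaneously handles the three sub-integrals (over $y<a$, $a<y<x$, $y>x$) across both $T_\lambda$-regimes while forcing $a_1$ to the correct order is the heart of the argument. A single power does appear to succeed after careful tuning---for instance $\alpha=\tfrac12$ in the quick-indirect phase and $\alpha=(3-\gamma(1+2\eta))/4$ in the delayed-indirect phase---but establishing this demands regime-by-regime computation of comparable length to the paper's, which your proposal does not carry out. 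Also note that in the delayed phases the lower and upper bounds match only up to logarithmic factors (stemming from the $T\log^2 T$ in~\eqref{T(a,lambda)}), not up to constants as you claim.
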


\begin{figure}[h!]
    \centering
    {{\includegraphics[width=7.3cm]{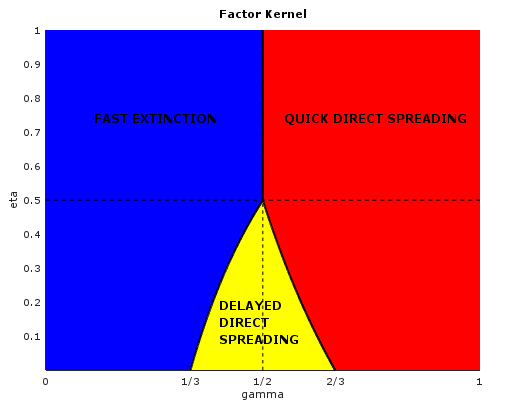} }}%
    \quad
    {{\includegraphics[width=7.3cm]{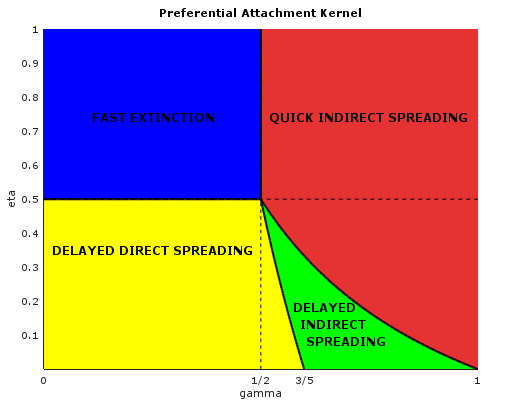} }}
\caption{The figures summarise Theorem~\ref{teofinal} in the form of phase diagrams for the factor kernel (left) and the preferential attachment kernel (right).}
\end{figure}

\begin{remark}
For both kernels, when $\eta>\frac12$ and $\gamma<\frac12$ we have 
$T_\lambda(x)\sim1$ and hence \eqref{defiT} holds for any $\delta>0$. 
By Theorem~\ref{teoupper}\,(1) we then get that $\E[T_{\rm ext}]$ is even 
subpolynomial in~$N$.
\end{remark}

\begin{remark}
The different exponents for the metastable densities in Theorem~\ref{teofinal} 
are indicative of different survival strategies for the infection, as  indicated in Figure~1.
Propositions~1 and~2 follow from Theorem~\ref{teofinal}  by letting $\eta=0$.
\end{remark}

\begin{remark}
In the cases of slow extinction, our results are actually slightly more precise than stated in Theorem~\ref{teofinal}. In particular, the upper metastable density always satisfies
\smash{$\rho^+(\lambda)\le c \lambda^{\xi}$} for some constant $c$. In the phases when quick direct/indirect spreading prevails, the lower metastable density also satisfies 
\smash{$\rho^-(\lambda)\ge c \lambda^{\xi}$} for some $c>0$, so in these phases we obtain the metastable densities up to a bounded multiplicative factor. 
\end{remark}



\bigskip
\pagebreak[3]

The rest of this paper is organized as follows; In Section~3 we introduce a graphical representation of the evolving network and contact process which allows us to define the process rigorously, yielding at the same time useful properties such as self-duality and monotonicity. In Sections~4 and~5 we give the proofs of Theorems~\ref{teolower} and \ref{teoupper} respectively, and finally in Sections~6 and~7 we apply those theorems to deduce Theorem~\ref{teofinal}. 


\section{Graphical representation}

The evolving network model $({\mathscr G}^{\ssup N}_t \colon t\geq 0, N\in \N)$ is represented 
with the help of the following independent random variables;
\begin{enumerate}
\item[(1)] For each $x\in\N$, a Poisson point process \smash{$\mathcal U^x=(U^x_n)_{n\ge 1}$} of intensity $\kappa_x$, describing the updating times of the vertex~$x$. Given $x\not=y$ we also write 
\smash{$\mathcal U^{x,y}=(U^{x,y}_n)_{n\ge 1}$} for the union $\mathcal U^x \cup \mathcal U^y$, 
which is a Poisson point process of intensity $\kappa_x+\kappa_y$, describing the updating times of the potential edge $\{x,y\}$. \smallskip
\item[(2)] For each $\{x,y\}$ with $x\not=y$ and $x,y\leq N$, a sequence of independent random variables $(C^{x,y}_n)_{n\ge0}$, all Bernoulli with parameter $p_{x,y}$, 
describing the presence/absence of the edge in the network after the successive updating times of the potential edge $\{x,y\}$. More precisely, if $t\ge 0$ then 
$\{x,y\}$ is an edge in~\smash{${\mathscr G}^{\ssup N}_t$} if and only if  $C^{x,y}_n=1$ for $n= \vert[0,t]\cap \U^{x,y}\vert$. We denote ${\mathcal C}^x:=(C^{x,y}_n\colon y\leq N, n\in\N)$.\smallskip
\end{enumerate}
Given the network we represent the infection by means of the following set of
independent random variables;
\begin{enumerate}
\item[(3)] For each $x\in \N$, a Poisson point process $\mathcal R^x=(R^x_n)_{n\ge 1}$ of intensity one describing the recovery times of~$x$.\smallskip
\item[(4)] For each $\{x,y\}$ with $x\not=y$, a Poisson point process $\mathcal {\mathcal I}_0^{x,y}$ with intensity $\lambda$ describing the infection  times along the edge $\{x,y\}$. Only the trace $\mathcal I^{x,y}$ of this process on the~set
$$\bigcup_{n=0}^\infty \{[U^{x,y}_n, U^{x,y}_{n+1}) \colon  C^{x,y}_n=1\} \subset [0,\infty)$$
can actually cause infections. Write $(I^{x,y}_n)_{n\ge1}$ for the ordered points of $\mathcal I^{x,y}$.
If just before time $I^{x,y}_n$ vertex $x$ is infected and $y$ is healthy, then $x$ infects $y$ at time  $I^{x,y}_n$. If $y$ is infected and $x$
healthy, then $y$ infects $x$.  Otherwise, nothing happens.\smallskip
\end{enumerate}
The infection is now described by a process $(X_t(x), x\in \{1,\ldots,N\}  \colon t\geq 0)$ with values 
in~\smash{$\{0,1\}^N$}, such that $X_t(x)=1$ if $x$ is 
infected at time~$t$, and $X_t(x)=0$ if $x$ is healthy at time $t$.  
More formally, the infection process 
associated to this 
graphical representation and to a starting set $A_0$ of infected vertices, is the c\`adl\`ag process with $X_0(x)=\one_{A_0}(x)$ 
evolving only at times $t\in \mathcal R^x\cup \bigcup_{n=1}^\infty I_n^{x,y}$, according to the following rules:
\begin{itemize}
\item If $t\in \mathcal R^x$, then $X_t(x)=0$ (whatever $X_{t-}(x)$). 
\item If $t  \in \mathcal I^{x,y}$, then 
$$
(X_t(x),X_t(y))=
\left\{
  \begin{array}{rl}
    (0,0) & \mbox{ if } (X_{t-}(x),X_{t-}(y))=(0,0). \\
    (1,1) & \mbox{ otherwise.} \\
  \end{array}
\right.
$$
\end{itemize}
The process $({\mathscr G}^{\ssup N}_t, X_t \colon t\geq 0)$ is a Markov process describing the simultaneous evolution of the network and of the infection. We call $(\F_t\colon t\ge 0)$ its canonical filtration.


Using the graphical representation we obtain monotonicity and duality properties of the contact
process on the evolving graph. The proof of the following proposition
is standard within the context of the contact process, see~\cite{L85}, and therefore omitted here.

\begin{proposition}\label{dualprop}\ \\[-3mm]
\begin{enumerate}
\item {\bf Monotonicity.} 
If $(X^1_t \colon t\geq 0)$, $(X^2_t\colon t\geq 0)$ are processes constructed as above with $X^1_0\leq X^2_0$ and infection rates $\lambda_1\leq \lambda_2$, then $X^1_t\leq X^2_t$ stochastically.\smallskip
\item {\bf Self-Duality.} If $X^A, X^B$ correspond to the process with initial condition $X_0={\bf 1}_{A}$ and $X_0={\bf 1}_{B}$ respectively, then for all $t>0$,
$$\P\big(\exists x\in A,\;X^B_t(x)=1\big)\;=\;\P\big(\exists x\in B,\;X^A_t(x)=1\big).$$
\end{enumerate}
\end{proposition}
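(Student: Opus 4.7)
The plan is to derive both properties from pathwise couplings using the graphical representation just described. The random inputs split into a \emph{network environment}, namely the updating times $\mathcal{U}^x$ and Bernoulli arrays $\mathcal{C}^x$, and an \emph{infection environment}, namely the recovery processes $\mathcal{R}^x$ and infection attempt processes $\mathcal{I}_0^{x,y}$. Throughout I would keep the network environment fixed and only manipulate the infection environment.

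For monotonicity, I would construct $X^1$ and $X^2$ on the same probability space using the same realisation of the network environment and the same recovery processes, and couple the infection processes by writing $\mathcal{I}_0^{x,y,1}$ as an independent Bernoulli$(\lambda_1/\lambda_2)$ thinning of $\mathcal{I}_0^{x,y,2}$, which yields two Poisson processes of the required intensities with $\mathcal{I}_0^{x,y,1} \subset \mathcal{I}_0^{x,y,2}$. Then I would verify by induction on the (a.s.\ locally finite) sequence of event times that coordinatewise $X^1_t \le X^2_t$ is preserved at every event: at a recovery time both processes set the corresponding coordinate to $0$; at an infection time present in both processes, the update rule maps the pair $(X_{t-}(x), X_{t-}(y))$ monotonically, since its only effect is to replace a $0$ by a $1$ when the other coordinate is already $1$, so $X^1_{t-} \le X^2_{t-}$ implies $X^1_t \le X^2_t$; at an infection time present only in $\mathcal{I}_0^{x,y,2}$, the process $X^1$ does not change while $X^2$ may acquire a new $1$, which again preserves the order. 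Starting from $X^1_0 \le X^2_0$ this yields the claim pathwise and hence stochastically.

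For self-duality I would apply time reversal. Fix $t > 0$ and replace every Poisson process involved by its image under the reflection $s \mapsto t - s$. Homogeneous Poisson processes are invariant in distribution under this reflection, so the reversed recovery and infection-attempt processes have the same law as the originals. The network evolution, being a time-homogeneous Markov process started from its invariant measure, is reversible as a process in distribution, so the edge-indicator at time $t-s$ has the same joint law as the edge-indicator at time $s$; in particular the random set of \emph{valid} infection times (those lying in a time interval when $\{x,y\}$ is an edge) has the same law. The key structural point is that the edge infection rule is undirected, so an active infection path $(x_0, 0) \to (x_1, s_1) \to \cdots \to (x_k, t)$ in the original realisation corresponds one-to-one to an active path $(x_k, 0) \to (x_{k-1}, t - s_{k-1}) \to \cdots \to (x_0, t)$ in the reversed realisation. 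Writing the event $\{\exists x \in A : X^B_t(x) = 1\}$ as the existence of an active path from $B$ at time $0$ to $A$ at time $t$, the event $\{\exists x \in B : X^A_t(x) = 1\}$ becomes exactly its image under the time reversal, and equality of the probabilities follows from the distributional invariance established above.

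The only delicate step is the joint time-reversal invariance of the environment used for self-duality. For the recovery and infection Poisson processes this is immediate from time-homogeneity, while for the pair $(\mathcal{U}^x, \mathcal{C}^x)$ encoding the edge-presence processes it reduces to the reversibility of the stationary two-state Markov chain describing each edge, which is forced by the stationary initial condition together with independence across edges. Once this is in place, both claims reduce to the standard bookkeeping of active paths in the graphical representation, which is why the authors appeal to~\cite{L85} rather than writing out the details.
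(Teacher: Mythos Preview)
Your proposal is correct and follows exactly the standard route the paper alludes to (the paper omits the proof entirely, citing Liggett, and only remarks that the subtlety is combining the contact-process duality with that of the network dynamics). One small imprecision worth tightening: you justify the joint time-reversal invariance of the network environment via ``independence across edges'', but edges $\{x,y\}$ and $\{x,z\}$ share the update clock $\mathcal{U}^x$ and are therefore not independent as processes; what actually gives the joint invariance is that the independent Poisson clocks $(\mathcal{U}^x)_x$ are each reversal-invariant on $[0,t]$, and conditionally on them the Bernoulli sequences $(C^{x,y}_n)_n$ are i.i.d.\ and independent across pairs, so reversing the index order preserves the conditional law. With this adjustment your argument is complete and matches the intended one.
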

The only added subtlety in the proof of the proposition above when compared to~\cite{L85} is that the duality property combines both the duality property of the contact process, and that of the network dynamics.

\section{Slow extinction and lower bounds}

In this section we prove Theorem~\ref{teolower} by showing four different {\sl survival strategies} which can sustain the infection exponentially long. All these strategies are based on a division between powerful and weak vertices given by a parameter $a=a(\lambda)\in (0,1/2)$ 
as
$$\St\;:=\;\{1,2,\ldots,\lfloor aN\rfloor\},\;\;\;\;\;\Co\;:=\;\{\lfloor aN\rfloor+1,\ldots, N\}.$$
The elements of $\St$ are called \emph{stars} and the elements of $\Co$ are called \emph{connectors}.
Notice that when $\lambda$ decreases, any vertex with fixed degree has a lower chance of infecting its neighbours. Our definition of a star changes accordingly, that is, $a(\lambda)\downarrow0$ as $\lambda\downarrow0$. We denote by $\St_0=\{x\in \St, X_0(x)=1\}$ the set of initially infected stars, and by $S_0=|\St_0|$ its cardinality.
We start the proof with a relatively simple lemma, which already contains the flavour of the kind of inequalities we will use throughout the proof.
\medskip

\begin{lemma}\label{lemmalower}
Fix $r>0$ and suppose one is given an initial condition $(X_0,\mathscr{G}_0)$ such that $S_0\ge r a N$. Then there exists a constant $C>0$ (independent of $\lambda$, $a$, $N$) such that, for all~$t\in[2,3]$,
\begin{equation}
\label{lemma1r}
 \E\Big[\sum_{v=1}^N X_t(v) \Big| X_0,\mathscr{G}_0 \Big] \;\geq\; C (\lambda a p(a,1) \wedge 1) r N.
 \end{equation}
\end{lemma}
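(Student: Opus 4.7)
The plan is to lower bound $\E[\sum_v X_t(v)\mid X_0,\mathscr G_0]$ by counting connectors $v\in\Co$ that get infected at some $s\in[0,1/2]$ via direct transmission from an initially infected star and then survive to time $t$ by not recovering. For each pair $(x,v)$ with $x\in\St_0$, $v\in\Co$, define the success event
$$A^*_{x,v}\;:=\;\{\mathcal R^x\cap[0,1/2]=\emptyset\}\cap\{\mathcal I^{x,v}\cap[0,1/2]\neq\emptyset\},$$
which guarantees that $x$ stays infected throughout $[0,1/2]$ and transmits to $v$ at some time in that interval. Setting $Z_v:=\sum_{x\in\St_0}\one_{A^*_{x,v}}$, the event $\{Z_v\ge 1\}$ depends only on $(\mathcal R^x,\mathcal I^{x,v},\mathcal U^x,\mathcal U^v,\mathcal C^{x,v})_{x\in\St_0}$ and is therefore independent of the recovery process $\mathcal R^v$. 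On the intersection with the independent event $\{\mathcal R^v\cap[0,t]=\emptyset\}$, which has probability $e^{-t}\ge e^{-3}$, one has $X_t(v)=1$, and hence
$$\P(X_t(v)=1\mid X_0,\mathscr G_0)\;\ge\;e^{-3}\,\P(Z_v\ge 1\mid X_0,\mathscr G_0).$$

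\textbf{Estimating $\P(Z_v\ge 1)$.} Conditionally on the graph dynamics, $\mathcal I^{x,v}\cap[0,1/2]$ is a Poisson process with mean $\lambda L_{x,v}$, where $L_{x,v}$ is the total Lebesgue measure of times $s\in[0,1/2]$ at which $\{x,v\}$ is present. Using that the presence of the edge is a two-state Markov chain with switching rate $\kappa_x+\kappa_v\ge 2\kappa_0>0$ and stationary probability $p_{x,v}$, an explicit computation of $\int_0^{1/2}\P(\{x,v\}\text{ present at }s\mid\mathscr G_0)\,ds$ gives $\E[L_{x,v}\mid\mathscr G_0]\ge c_0\, p_{x,v}$ for some $c_0=c_0(\kappa_0)>0$, regardless of the initial state of the edge. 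Combined with the independent factor $e^{-1/2}$ from the no-recovery event and the bound $1-\E[e^{-\lambda L_{x,v}}]\ge c\,(\lambda\E[L_{x,v}]\wedge 1)$ (valid for small $\lambda$ since $L_{x,v}\le 1/2$), this yields $\P(A^*_{x,v})\ge c_1(\lambda p_{x,v}\wedge 1)$. For the second-moment step, observe that for $x\ne x'$ the events $A^*_{x,v}$ and $A^*_{x',v}$ involve disjoint ingredients of the graphical representation except for the shared Poisson process $\mathcal U^v$, and are therefore independent conditional on $\mathcal U^v$. The crucial point is that the uniform lower bound $\kappa_x\ge\kappa_0$ (available thanks to $\eta\ge 0$) confines the conditional probabilities $g_x(\mathcal U^v):=\P(A^*_{x,v}\mid\mathcal U^v)$ to an interval whose ratio is bounded by a constant depending only on $\kappa_0$. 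Cauchy--Schwarz then yields $\P(A^*_{x,v}\cap A^*_{x',v})\le C_1\,\P(A^*_{x,v})\P(A^*_{x',v})$, and Paley--Zygmund gives $\P(Z_v\ge 1)\ge(\E[Z_v]\wedge 1)/(C_1+1)$.

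\textbf{Summation and main obstacle.} Monotonicity of $p$ gives $p_{x,v}\ge(p(a,1)/N)\wedge 1$ for $x\in\St$, $v\in\Co$, so the hypothesis $S_0\ge raN$ yields $\E[Z_v]\ge c_2\lambda r a p(a,1)$ (the saturated case $p_{x,v}=1$ gives an even larger bound, handled in parallel). Combining with the previous step, $\P(Z_v\ge 1)\ge c_3\, r\,(\lambda a p(a,1)\wedge 1)$, and summing $e^{-3}\,\P(Z_v\ge 1)$ over the $|\Co|\ge N/2$ connectors produces the claim with a constant $C=C(r,\kappa_0,c_1,c_2)>0$. The hardest part of the argument is the correlation control in the second-moment step: the events $A^*_{x,v}$ are positively correlated through the shared process $\mathcal U^v$, and proving that this correlation is bounded by a universal constant is precisely where the uniform lower bound $\kappa_x\ge\kappa_0$, i.e., the fast-evolving assumption $\eta\ge 0$, plays an essential role.
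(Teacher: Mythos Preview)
Your approach differs from the paper's: you work directly on $[0,\tfrac12]$ and try to absorb the dependence on $\mathscr G_0$ via a second-moment argument, whereas the paper first forces each useful star to update, defining $\St'=\{x\in\St_0:\mathcal R^x\cap[0,2]=\emptyset,\ \mathcal U^x\cap[0,1]\neq\emptyset\}$, and only then looks for infections on $[1,2]$. The update in $[0,1]$ makes the state of every edge $\{x,y\}$ on $[1,2]$ a fresh Bernoulli$(p_{x,y})$, independent of $\mathscr G_0$; conditionally on $\St'$ the events $\{\mathcal I^{x,y}\cap[1,2]\neq\emptyset\}$ are then genuinely independent across $x\in\St'$, and a simple product bound replaces your Paley--Zygmund step entirely.

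Your second-moment step, as written, has a gap. The claim that $g_x(\mathcal U^v)=\P(A^*_{x,v}\mid\mathcal U^v)$ lies in an interval of bounded ratio is false when the edge $\{x,v\}$ is present in $\mathscr G_0$. Take $\eta=0$ and $p_{x,v}$ small: if $\mathcal U^v\cap[0,\tfrac12]=\emptyset$, the edge survives for an expected time of order $(1-e^{-\kappa_0/2})/\kappa_0$ before the first update of $x$, giving $g_x\asymp\lambda$; if $\mathcal U^v$ has a point at time $0^+$, the edge is immediately refreshed to probability $p_{x,v}$, giving $g_x\asymp\lambda p_{x,v}$. The ratio is of order $1/p_{x,v}$, unbounded. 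Your conclusion $\P(A^*_{x,v}\cap A^*_{x',v})\le C_1\P(A^*_{x,v})\P(A^*_{x',v})$ is nevertheless salvageable, but via the weaker (and true) estimate $\sup_{\mathcal U^v} g_x\le C\,\E[g_x]$: for initially present edges this follows because $\kappa_x\ge\kappa_v$ when $x\in\St$ and $v\in\Co$ (so $\kappa_x+\kappa_v\le 2\kappa_x$), and for initially absent edges it is immediate. Note that the relevant inequality is $\kappa_x\ge\kappa_v$, not merely $\kappa_x\ge\kappa_0$ as you wrote; this is exactly the complication the paper's update trick is designed to sidestep.
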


\begin{remark}
Lemma~\ref{lemmalower} remains true if $t$ is in an arbitrary compact set bounded away from zero, changing only the value of $C$. For our purposes the above formulation 
suffices.
\end{remark}
\begin{proof}
We introduce a terminology specific to this proof as follows:
\begin{eqnarray*}
\St'& :=&\{x\in\St \colon \mathcal{R}^x\cap[0,2]=\emptyset, \mathcal{U}^x\cap[0,1]\neq\emptyset\}, \\
\Co'&:=&\{y\in\Co \colon \mathcal{R}^y\cap[1,3]=\emptyset, \exists x \in \St', \; \mathcal{I}^{x,y}\cap[1,2]\neq\emptyset\}. 
\end{eqnarray*}
Each $x\in \St_0$ belongs to $\St'$ independently with probability $e^{-2}(1-e^{-\kappa_x}) \ge e^{-2}(1-e^{-\kappa_0})$. Therefore the cardinality $S'$ of $\St'$ dominates a binomial random variable with parameters $S_0$ and $e^{-2}(1-e^{-\kappa_0})$.

For $y\in \Co$, the event $\mathcal R^y\cap[1,3]=\emptyset$ has probability $e^{-2}$ and is independent of the event $E^y:=\{\exists x\in  \St', \; \mathcal{I}^{x,y}\cap[1,2]\neq\emptyset\}$, whose probability we want to estimate. Conditionally on $x\in \St'$, a sufficient condition for the event $E^y$ to be satisfied is that
\begin{enumerate}
\item $\mathcal{I}_0^{x,y}\cap[1,2]\neq\emptyset$, which happens with probability $1-e^{-\lambda}$.
\item At time $t= \min \left( \mathcal{I}_0^{x,y}\cap[1,2]\right)$, the edge $\{x,y\}$ belongs to the network. This happens with probability $p_{x,y}$, independently of the configuration of the network at time 0, as vertex $x$ has updated on the time interval $[0,1]$.
\end{enumerate}
The probabilities here obtained are independent from each other and from the realization chosen for the $\mathcal{U}^x,\mathcal{R}^x$, hence $x$ infects $y$ on time interval $[1,2]$ (namely $\mathcal I^{x,y}\cap[1,2]\ne \emptyset$) with probability at least \smash{$p_{x,y}(1-e^{-\lambda})$}, which we can bound below by \smash{$(1-e^{-\lambda}) p(a,1)/N$}, by the definition of $p_{x,y}$ and the monotonicity of $p(\cdot, \cdot)$. If we now condition on $\St'$, the events $\mathcal I^{x,y}\cap[1,2]\ne \emptyset$ are independent and we get
\begin{eqnarray*}
\P(E^y \big| \St') &\ge& 1-\left(1- (1-e^{-\lambda}) \sfrac {p(a,1)}N\right)^{S'}\\
&\ge& 1- \exp\big(- (1-e^{-\lambda}) \sfrac {p(a,1) S'}N\big) \ge \sfrac {\lambda p(a,1) S'}{4N} \wedge \sfrac 12,
\end{eqnarray*}
where in the last inequality we used twice the inequality $1-e^{-x}\ge (x\wedge 1)/2$ for $x\ge 0$ (we also used $\lambda<1$). Finally we obtain that, given the initial condition of the network and the infection and conditionally on $\St'$, the cardinality of $\Co'$ dominates a binomial random variable with parameters $|\Co| \ge N/2$ and $$\rho=\sfrac {e^{-2}\lambda p(a,1) S'}{4N} \wedge \sfrac {e^{-2}}2.$$
To conclude, we first consider the case $\lambda a p(a,1)<1$. Then we always have $\rho=\sfrac {e^{-2}\lambda p(a,1) S'}{4N}$, and we easily get that the expectation of $|\Co'|$ is bounded from below by
$$ \frac {e^{-3}(1-e^{-\kappa_0})} 8 \lambda a p(a,1) r N.$$
In the case $\lambda a p(a,1)>1$, we have $\rho\ge \sfrac {e^{-2} S'}{4 a N}$, and we get a bound of
$$ \frac {e^{-3}(1-e^{-\kappa_0})} 8 r N.$$
This altogether proves~\eqref{lemma1r} with $C= e^{-3}(1-e^{-\kappa_0})/8$.
\end{proof}
\medskip



Denoting by $\St_k$ the set of infected stars at time $k\in\N$, and by $S_k=|\St_k|$ its cardinality, we aim to prove that for some $r>0$ the events ${\mathscr E}^r_k:=\{S_k>raN\}$ hold for a sufficiently long time. With this in mind we say that a family of events ${\mathscr E}_k$ depending on $k\in\N$
holds \emph{exponentially long} if there exists $c>0$ such that, for all $N$, 
\[ 
\mathbb{P}\Big(\bigcap_{k\le e^{c N}}{\mathscr E}_k \Big) \ge 1- e^{-c N}. \\[-1mm]
\]
As ${\mathscr E}^r_k\subset\{T_{\rm ext}>k\}$, if the events ${\mathscr E}^r_k$ hold exponentially long, we have slow extinction. 
Moreover, from Lemma~\ref{lemmalower} we have
$$C\lambda a p(a,1)\P\left(|\{x\in \St \colon X_{\lfloor t\rfloor-1}(x)=1\}|>raN\right)\;\leq\; I_N(t)
$$
for any $t>2$, so if ${\mathscr E}^r_k$ holds exponentially long, the left hand side above is bounded from below by \smash{$C\lambda a p(a,1)(1- e^{-c N})$} for some $c>0$, and hence we deduce the lower bound~\eqref{lowdensityetapos} on the metastable density. Our aim is therefore not only to prove slow extinction, but the stronger result that under any of the conditions given in Theorem~\ref{teolower}, the events ${\mathscr E}^r_k$ hold exponentially long. Actually, we will also allow a conditioning on any intial configuration included in the event ${\mathscr E}^r_0$, and still show that the events ${\mathscr E}^r_k$ hold exponentially long.
\medskip

The proof of metastability, on the other hand, will also follow from this result. First, note that 
from \eqref{dual} ot is easy to see that $I_{N}(\cdot)$ is decreasing, and hence it suffices to show that, whenever $t_N\le e^{\eps N}$, 
$$ \limsup_{N\to \infty} | I_N(t)-I_N(t_N) | \underset {t\to \infty}\longrightarrow 0.$$
Using self-duality, we can write
$$I_N(t)-I_N(t_N)= \frac 1 N \,  \sum_{v=1}^N \P_v\big(t< \Tex<t_N\big),$$
where we recall that $\P_v$ stands for the probability measure corresponding to the infection starting with only vertex $v$ infected. So metastability follows if we can prove that $\P_v(t<\Tex< e^{\eps N})$ converges to 0 as $t\to \infty$, uniformly in $N$ and $v\in \{1, \ldots , N\}$. We separate this proof into three steps:
\begin{itemize}
\item[(1)] For every $n\ge0$, uniformly in $N$ and $v$,
$\displaystyle\P_v(\max S_k< n,\Tex >t) \underset {t\to \infty} \longrightarrow 0.$\\
\item[(2)] Uniformly in $N$ and $v$,
$\displaystyle \P_v(\max S_k\ge r a N |\max S_k \ge n) \underset {n\to \infty} \longrightarrow 1.$\\
\item[(3)] Uniformly in $v$,
$\displaystyle \P_v(\Tex\ge e^{\eps N} | \max S_k \ge r a N) \underset {N\to \infty} \longrightarrow 1.$
\end{itemize}
These three steps easily give the result. Indeed, for any given $\eps>0$, choosing $N_0$ as in (3), then $n$ as in (2), then $t$ as in (1), we obtain that for any $N\ge N_0$ and $v\in\{1,\ldots, N\}$,
\[
\P_v(t<\Tex< e^{\eps N})\le 3\eps.
\]

The \emph{first step} is fairly easy and relies on the observation that there is some constant $c>0$ depending only on $n$ such that for all $k\ge 0$,
\[
\P_v(S_{k+1}\ge n | \F_k) \ge c \1_{\{S_k>0\}},
\]
We deduce $\P_v(\max S_k<n , S_t>0) \le (1-c)^{\lfloor t \rfloor}$, which proves the first step.
\medskip

For the \emph{second step}, we introduce the stopping time $K:=\inf\{k\ge 0, S_k \ge n\}$ and prove $\P_v(\max S_k\ge r a N | K<+\infty, \F_K) \to \infty$ uniformly on $N$, $v$ and on the $\sigma-$field $\F_K$. In other words, we provide a uniform bound in all the possible configurations 
\smash{$({\mathscr G}^{\ssup{N}}_K,X_K)$} for which $S_K\ge n$. This bound follows from the analysis of the process $(S_k)$ below (see in particular Lemma~3.(1) and Lemma~4.(1)), and concludes the second step. 
\medskip

Finally, the \emph{third step} also follows from the analysis of the process $(S_k)$. If the stopping time $\tilde K:=\inf\{k\ge 0, S_k \ge r a N\}$ is finite, then the event $\mathscr E^r_{\tilde K}$ holds, and further the events $\mathscr E^r_{k+\tilde K}$ hold exponentially long, proving the third step.

\medskip

We now provide the detailed analysis of the process $(S_k)$. A useful tool, which we will use repeatedly, is the following large deviation estimate, which can be 
directly derived from Chernoff's inequality.

\begin{lemma} \label{ldev}
Let $X$ be a binomial random variable with parameters $n$ and $q$.  Then
$$
\P(X<snq)\;\leq\;e^{-Dn}\;\;\;\mbox{ where }D=sq\log(s)+(1-sq)\log\left(\sfrac{1-sq}{1-q}\right)
\mbox{ and } s<1.
$$ 
\end{lemma}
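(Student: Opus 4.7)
The plan is to apply Chernoff's exponential Markov inequality in its standard form. First I would fix any $t>0$ and observe, since $x\mapsto e^{-tx}$ is decreasing, that
$$\P(X<snq)\;\le\;\P(e^{-tX}>e^{-tsnq})\;\le\;e^{tsnq}\,\E[e^{-tX}].$$
Writing $X=\sum_{i=1}^n Y_i$ as a sum of independent Bernoulli$(q)$ variables, the moment generating function factorises to give $\E[e^{-tX}]=(qe^{-t}+1-q)^n$, so that
$$\P(X<snq)\;\le\;\exp\bigl(n\bigl[\,tsq+\log(qe^{-t}+1-q)\bigr]\bigr).$$

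Next I would optimise the exponent over $t>0$. Differentiating in $t$ and equating to zero yields the stationary equation $qe^{-t^\ast}=sq\bigl(qe^{-t^\ast}+1-q\bigr)$, whose unique solution is $e^{-t^\ast}=\frac{s(1-q)}{1-sq}$. Because $s<1$ this quantity is strictly less than one, so $t^\ast>0$ and the critical point is admissible. A short computation gives $qe^{-t^\ast}+1-q=\frac{1-q}{1-sq}$, and substituting both expressions back into the exponent produces
$$t^\ast sq+\log(qe^{-t^\ast}+1-q)\;=\;-sq\log(s)-(1-sq)\log\!\left(\frac{1-sq}{1-q}\right)\;=\;-D.$$

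The bound $\P(X<snq)\le e^{-Dn}$ then follows immediately. The only real obstacle is the algebraic bookkeeping in the last substitution; positivity of $D$, needed for the estimate to be non-trivial, is automatic because $D$ is the Kullback--Leibler divergence between a Bernoulli$(sq)$ and a Bernoulli$(q)$ law, which is strictly positive for $s\ne 1$. No further input beyond independence of the summands and Markov's inequality is required.
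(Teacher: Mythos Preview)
Your argument is correct and is exactly the standard derivation the paper has in mind: the paper does not actually write out a proof but simply states that the bound ``can be directly derived from Chernoff's inequality,'' which is precisely what you do. Your optimisation in $t$ and the identification of $D$ with the relative entropy between $\mathrm{Bernoulli}(sq)$ and $\mathrm{Bernoulli}(q)$ are both accurate.
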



\subsection{Quick Direct Spreading}
Let us start with quick direct spreading, which is arguably the simplest mechanism, that makes no use of connectors and is only based on stars infecting directly other stars before recovery. This strategy can only succeed when the subgraph~$\St$ is sufficiently connected. In this case, for $x,y\in\St$ and small $\lambda>0$, our choice $\eta\geq 0$ implies that there is typically an updating event $\mathcal{U}^{x,y}$ between two infections in $\mathcal{I}_{0}^{x,y}$ so the times $\mathcal{I}^{x,y}$ when infections pass the edge $\{x,y\}$ can therefore be approximated by a Poisson point process with rate $\lambda p_{x,y}$. If $\lambda$ is small and $N$ large, these rates tend to zero and hence, during a brief interval of time, the infection starting from a single vertex is unlikely to infect twice the same vertex, resulting in the infection spreading like a Galton-Watson process.\smallskip

We use, specifically in this quick direct spreading subsection, the terminology
$$ \St'_k:= \{x\in \St_k \colon\mathcal{R}^x\cap[k,k+1]=\emptyset\},$$
and $S'_k=|\St'_k|$. Clearly, $\St'_k\subset \St_{k+1}$ and thus $S'_{k}\le S_{k+1}$, as infected stars that do not recover on $[k, k+1]$ are still infected at time $k+1$. Further, we let 
$$
\begin{aligned}
 \St''_k:= \{x\in \St \backslash\St_k \colon\;& \mathcal{U}^x\cap[k,k+1/2]\neq\emptyset, \mathcal{R}^x\cap[k+1/2,k+1]=\emptyset,  \\
 & \exists y\in \St'_k, \mathcal{I}^{x,y}\cap[k+1/2,k+1]\neq\emptyset\},
 \end{aligned}
 $$
and $S''_k=|\St''_k|$. Clearly, we also have $\St''_k\subset \St_{k+1}$, as the stars in $\St''_k$ have been infected on $[k+1/2, k+1]$ and did not recover on that time interval.

An advantageous property of $\St'_k$ and $\St''_k$, compared to $\St_{k+1}$, is that their conditional laws knowing $(\mathscr G^{\ssup N}_k, X_k)$, depend only on $\St_k$, and not on the network structure $\mathscr G^{\ssup N}_k$. 
So, the cardinality $S'_k$ of $\St'_k$ is (conditionally) a binomial random variable with parameters $S_k$ and $e^{-1}$.

Now, if $x$ is in $\St\backslash \St_k$, then it satisfies $\mathcal{U}^x\cap[k,k+1/2]\neq\emptyset$ and $\mathcal{R}^x\cap[k+1/2,k+1]=\emptyset$ with probability $e^{-1/2}(1-e^{-\kappa_x/2})\ge e^{-1/2}(1-e^{-\kappa_0/2}).$ 
Conditioning on this event and on $\St'_k$, a similar argument to the one used in the proof of Lemma \ref{lemmalower} gives that $x$ belongs to $S_k''$ with probability at least
\begin{eqnarray*}
 \left[1- \left(1-\sfrac {\lambda p(a,a)}{4N}\right)^{S'_k}\right] &\ge& \left[1- \exp \left(-\sfrac {\lambda p(a,a) S'_k}{4N}\right)\right] \\
&\ge& \sfrac {\lambda p(a,a) S'_k}{8N} \wedge \sfrac 12.
\end{eqnarray*}
As a consequence, $S''_k$ dominates a binomial random variable with parameters $S-S_k$ and 
$$e^{-1/2}(1-e^{-\kappa_0/2}) \left(\sfrac {\lambda p(a,a) S'_k}{8N} \wedge \sfrac 12 \right).$$ 
Gathering these results with $S_{k+1}\ge S'_k+S''_k$, we obtain a stochastic lower bound for the conditional distribution of $S_{k+1}$ given $\F_k$, which we exploit in the following lemma.
\begin{lemma}\label{discreteMC}
Suppose $\rho$, $\rho'$ and $c$ are three positive constants such that $\rho \rho'>1$. Suppose $M_0, M'_0, M_1, M'_1, \ldots$ is a process on $\{0, 1, \ldots, n\}$, adapted to a filtration $(\F_0,\F'_0,\F_1,\F'_1,\ldots)$, such that 
\begin{itemize}
\item given $\F_k$ the random variable $M'_k$  is binomially distributed with parameters $M_k$~and~$\rho$;
\item given $\F'_k$ the random variable $M_{k+1}-M'_k$ dominates a binomially distributed random variable
with parameters  $n - M_k$ and \smash{$\rho' \frac {M'_k} {n+1} \wedge c$}.
\end{itemize}
 Then there exist positive constants $r,l,\eps>0$ such that for large $n$:
\begin{enumerate}
\item For every initial condition $M_0=m_0$, the probability that the process $M_k$ goes above value $rn$ is at least $1- e^{-l m_0}$.
\item For every initial condition $M_0=m_0\ge rn,$ with probability at least $1- e^{-\eps n}$, the process $(M_k)$ stays above value $rn$ at all times $k\le e^{\eps n}$.
\end{enumerate} 
\end{lemma}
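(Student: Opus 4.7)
The plan is a two-part Galton--Watson style argument. For Part~(1) I would exhibit an exponential Lyapunov function $q^{M_k}$ which is a supermartingale while $M_k$ stays below the threshold $rn$, and conclude by optional stopping at the exit time from $(0,\lceil rn\rceil)$. For Part~(2) I would apply Lemma~\ref{ldev} twice to show that a single step drops below $rn$ only with exponentially small probability, and then take a union bound over $k\le e^{\eps n}$.

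\textbf{Part~(1).} Fix $r\in(0,1/2)$ small enough that $\rho' r<c$ (so the truncation $\wedge\, c$ is inactive as long as $M'_k\le rn$) and that $\mu:=\rho\bigl(1+(1-r)\rho'\bigr)>1$; both are achievable since $\rho\rho'>1$ forces $\rho(1+\rho')>1$ and the inequalities persist under small perturbations. Given $M_k=m\le rn$, stochastic domination combined with $(1-x)^N\le e^{-Nx}$ gives, for $q\in(0,1)$ and $n$ large,
\[
\E\!\left[q^{M_{k+1}}\,\big|\,M_k=m\right]\;\le\;\E\!\left[q^{M'_k}\exp\bigl(-(1-q)(1-r)\rho' M'_k\bigr)\,\big|\,M_k=m\right]\;=\;\bigl(1-\rho(1-\tilde q)\bigr)^m,
\]
where $\tilde q:=q\exp(-(1-q)(1-r)\rho')$. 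Both sides of the target inequality $1-\rho(1-\tilde q)\le q$ equal $1$ at $q=1$, with derivatives $\mu$ and $1$ respectively; since $\mu>1$, the inequality holds strictly for $q<1$ close enough to $1$, so $q^{M_k}$ is a supermartingale on $[0,\lceil rn\rceil)$. Augmenting the chain so that $0$ is absorbing (this preserves the hypotheses), the exit time $\tau$ of $(0,\lceil rn\rceil)$ is almost-surely finite by finiteness of the state space, and optional stopping yields $\P(M_\tau=0)\le q^{m_0}$. This is Part~(1) with $l:=-\log q>0$.

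\textbf{Part~(2).} For $M_k=m\ge rn$, Lemma~\ref{ldev} applied to $M'_k\sim\mathrm{Bin}(m,\rho)$ gives $M'_k\ge\tfrac{\rho}{2}m$ except with probability at most $e^{-c_1 n}$. On this event the success probability of the binomial dominating $M_{k+1}-M'_k$ is at least $p_0:=\min\bigl(c,\tfrac{\rho\rho' r}{2}\bigr)$, and a second application of Lemma~\ref{ldev} yields $M_{k+1}-M'_k\ge\tfrac12(n-m)p_0$ except with probability at most $e^{-c_2 n}$. A short case split at $m=(1-\eta)n$---using that the survivor count $\tfrac{\rho}{2}m$ already exceeds $rn$ when $m$ is close to $n$ (for $r$ small enough that $\tfrac{\rho}{2}(1-\eta)>r$), and that the new-infection contribution $\tfrac12(n-m)p_0$ dominates otherwise---gives $\P(M_{k+1}\ge rn\mid M_k\ge rn)\ge 1-e^{-\eps' n}$. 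A union bound over $k\le e^{\eps n}$ with $\eps<\eps'$ finishes Part~(2).

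The main obstacle is the Lyapunov calibration of Part~(1): the supercriticality margin $\mu-1>0$ must be spent carefully to absorb the losses from Bernoulli-to-exponential comparison and from $(n-m)/(n+1)<1-r$, while preserving $q<1$ strictly so that the optional-stopping conclusion $q^{m_0}<1$ is nontrivial. The Chernoff estimates in Part~(2) are less delicate but require the case split to cover both the survival-dominated regime ($m$ close to $n$) and the infection-dominated regime ($m$ close to $rn$).
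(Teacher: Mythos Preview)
Your argument follows the same route as the paper's: the exponential Lyapunov supermartingale $q^{M_k}$ together with optional stopping for Part~(1), and two applications of Lemma~\ref{ldev} with a case split and union bound for Part~(2). One detail to tighten in Part~(2): with the Chernoff fraction $\tfrac12$ used twice, your case split only closes when $\rho\rho'>4$ (the requirement becomes $\eta\ge 4/(\rho\rho')$), so to cover all $\rho\rho'>1$ you should take that fraction close to~$1$, as the paper does by choosing $\iota\in(1/\rho',\rho)$.
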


Under the hypothesis $\lambda a p(a,a)> {8e}/{e^{-1/2}(1-e^{-\kappa_0/2})},$ we can apply Lemma~\ref{discreteMC} with the choice $M_k=S_k$ $M'_k=S'_k$, 
$\F'_k=\sigma(\F_k,S'_k)$, $n=\lfloor aN \rfloor$, $\rho= e^{-1}$, 
$$\rho'= \sfrac{e^{-1/2}(1-e^{-\kappa_0/2})} 8 \lambda a p(a,a) \quad\mbox{ and }\quad c=\sfrac{e^{-1/2}(1-e^{-\kappa_0/2})} 2.$$ 
Item~(2) then completes the proof of slow extinction, while items~(1) and (2) complete the proof of metastability. Thus, using Lemma~\ref{discreteMC}, we have proven the quick direct spreading part of Theorem~\ref{teolower}, with \smash{$M_{(i)}={8e}/{e^{-1/2}(1-e^{-\kappa_0/2})}$}. 

\begin{proof}[Proof of Lemma~\ref{discreteMC}]
We first prove the second item. Choose \smash{$r<c \wedge \sfrac 1 {\rho'}$}, which, together with $\rho \rho'>1$, implies $r<\rho$. If $M_k\ge r' n$ with  $r'>r/\rho$, then Lemma~\ref{ldev} implies $M_{k+1}\ge M'_k \ge rn$ with probability at least $1- e^{-c n}$, for some constant $c>0$.
If $rn\le M_k \le r'n$, choosing $\iota$ in the nonempty interval 
\smash{$(\sfrac 1 {\rho'}, \rho \wedge \sfrac{c}{r \rho'})$}, a further application of Lemma~\ref{ldev} yields $M'_k\ge \iota M_k$, with error probability bounded by some $e^{-c n}$ (possibly with a new value of $c>0$). 

Further, we can bound $\rho' \frac {M'_k} {n+1}$ from below by  
$\iota \rho' \frac n {n+1} r$, which is in $(r,c)$ for large $n$. A last application of Lemma~\ref{ldev} yields $M_{k+1}-M'_k\ge  r (n- M_k)$ and then $M_{k+1}\ge r n$, with error probability bounded by $e^{-\eps n}$. This easily gives item~(2) of the Theorem (dividing $\eps$ by 2 if needed).
\medskip

For the first item we let $K:= \inf\{k\ge 0, M_k=0 \text{ or } M_k\ge r' n\},$ where $r'$ is a constant to be determined later. We possibly have $r'<r$, but~(2) still holds if we replace\footnote{Also, the reader can check that once the process has gone above level $r' n$, it is actually likely to go above level $r n$, too.} $r$ by $r\wedge r'$.
Under the hypothesis $r'<\eta/\rho'$ and $k< K$, we can bound below the law of $M_{k+1}$ knowing $M'_k$ by a binomial random variable with parameters $(1-r')n$ and $\rho' M'_k /(n+1)$. We now prove that for some well-chosen $l>0$, the process $(e^{-l M_{k\wedge K}})_{k\ge 0}$ is a positive supermartingale. Note that the result then follows from a standard stopping theorem.
\medskip


It suffices to prove the inequality
$$\E[e^{-l M_1} | \F_0] \le e^{-l M_0}$$
on the event $M_0<r' n$. But on this event, the Laplace transform of Binomial random variables easily gives the following:
\begin{eqnarray*}
\E[e^{-l M_1} | \F'_0]&\leq\left(1- \sfrac {\rho' M'_0}{n+1} (1-e^{-l})\right)^{(1-r') n} 
\le \exp\left(- \sfrac {(1-r')n}{n+1} \rho' M'_0 (1-e^{-l})\right).
\end{eqnarray*}
Thus, a further Laplace transform gives
\begin{eqnarray*}
\E[e^{-l M_1} | \F_0]&\le&\Big(1- \rho \big(1-e^{- \frac{(1-r')n}{n+1} \rho' M'_0 (1-e^{-l})}\big)\Big)^{M_0}\\
&\le& \exp\Big(-\rho M_0  \big(1-e^{- \frac{(1-r')n}{n+1} \rho' (1-e^{-l})}\big)\Big).
\end{eqnarray*}
When $l$ goes to 0, the last expression is \smash{$\exp(-l \frac {(1-r')n}{n+1} \rho \rho' M_0 (1+o(1)))$. }
Choosing $r'>0$ small and $n$ large so that \smash{$\frac {(1-r')n}{n+1} \rho \rho'>1$}, and then $l>0$ small, we can guarantee
\smash{$\E[e^{-l M_1} | \F_0] \le e^{-l M_0},$}
and this completes the proof.
\end{proof}

\pagebreak[3]

\subsection{Quick Indirect Spreading}

Quick indirect spreading is a mechanism similar to quick direct spreading in the sense that stars spread the infection before seeing a simple recovery event, but in this case the infection spreads to connectors which in turn infect stars again. Being a two stage mechanism, quick indirect spreading can be less efficient than its direct version as we see in the case of the factor kernel. However, as it relies on the connectedness of the whole network rather than the connectedness among stars it can be advantageous when the latter is scarce. As with the direct version, the quick update of stars allows us to think of valid infections as Poisson point processes with parameter $\lambda p_{x,y}$ and then approximate the behaviour of $X_t$ as a Galton-Watson process.
\medskip

We keep from the quick direct spreading subsection the notation $S_k=|\St_k|
$ as well as \smash{$S'_k=|\St'_k|=|\{x \in \St_k: \mathcal R^x\cap [k,k+1]= \emptyset \}|$}. Again, conditionally on the network and infection evolution up to time $k$, the cardinality $S'_k$ of $\St'_k$ is binomial with parameters $S_k$ and~$e^{-1}$. 
In order to consider indirect spreading, we now introduce the following terminology, specific to this subsection:
\begin{eqnarray*}
\Co_k&:=& \{y \in \Co: \mathcal U^y\cap[k,k+1/3] \ne \emptyset, \mathcal R^y\cap [k+1/3,k+1]= \emptyset, \\
&& \hspace{52mm} \exists x\in \St'_k, \mathcal I^{x,y} \cap [k+1/3,k+2/3] \ne \emptyset\}, \\
\St''_k&:=& \{x \in \St\backslash \St_k: \mathcal R^x\cap[k+2/3, k+1] = \emptyset, \exists y \in \Co_k, \mathcal I^{x,y}\cap [k+2/3,k+1]\ne \emptyset\}.
\end{eqnarray*}
We also denote the cardinality of these sets by $C_k=|\Co_k|$, and $S''_k=|\St''_k|$, respectively. It should be clear that $S_{k+1}\ge S'_k +S''_k$. 
Indeed, for each star $x\in \St''_k$, we can find a star $z\in \St'_k$ that infects a connector $y\in \Co_k$ on time interval $[k+1/3, k+2/3]$, which stays infected until it infects $x$ on time interval $[k+2/3, k+1]$. Note that the condition
that connectors in $\Co_k$ should update on $[k, k+1/3]$, is useful for the law of $C_k$, conditionally on $\St'_k$ and on the network and infection evolution up to time $k$, to actually only depend on $\St'_k$. 
More precisely, each $y\in \Co$ belongs to $\Co_k$ with probability at least 
$$ e^{-2/3} (1-e^{-\kappa_0/3})\left(1-(1-e^{-\lambda/3})\sfrac {p(a,1)}N\right)^{S'_k}\ge  e^{-2/3} (1-e^{-\kappa_0/3})\left(\sfrac {\lambda p(a,1) S'_k}{6N}\wedge \sfrac 12\right).$$
Hence $C_k$ dominates a binomial random variable with parameters $(1-a)N$ and $e^{-2/3} (1-e^{-\kappa_0/3})({\lambda p(a,1) S'_k}/(6N)\wedge \frac 12).$
Similarly, conditionally on $S_k$, $S'_k$ and $C_k$, we can bound the random variable
$S''_k$ from below
by a binomial random variable with parameters $aN-S_k$ and 
$e^{-1/3} ({\lambda p(a,1) C_k}/(6N)\wedge \frac 12).$
Lemma~\ref{discreteMC} has to be replaced by the following lemma.
\begin{lemma}\label{discreteMC2}
Suppose $\rho, \rho', \rho'', \rho'''$ and $c, c'$ are positive constants such that $\rho \rho' \rho'' \rho'''>1$, and $M_0, M'_0, M''_0, M_1, M'_1,M''_1, \ldots$ is a process on $\{0, 1, \ldots,n\}$ adapted to the  filtration $(\F_0, \F'_0,\F''_0, \F_1, \F'_1, \F''_1, \ldots)$ such that 
\begin{itemize}
\item given $\F_k$ the random variable $M'_k$  is binomially distributed with parameters $M_k$~and~$\rho$;
\item given $\F'_k$ the random variable $M''_k$ is dominating a  binomially distributed random variable with parameters $\lceil\rho''' n\rceil$ 
and \smash{$\sfrac {\rho'} {n+1} M'_k \wedge c$;}
\item given $\F''_k$ the random variable $M_{k+1}-M'_k$ dominates a binomially distributed random variable with parameters $n - M_k$, and \smash{$\sfrac{\rho''}{n+1} M''_k  \wedge c'$.}
\end{itemize}
 Then 
there exist positive constants $r,l,\eps>0$ such that for large $n$:
\begin{enumerate}
\item[(1)] For every initial condition $M_0=m_0$, the probability that the process $M_k$ goes above value $rn$ is at least $1- e^{-l m_0}$.
\item[(2)] For every initial condition $M_0=m_0\ge rn,$ with probability at least $1- e^{-\eps n}$, the process $(M_k)$ stays above value $rn$ at all times $k\le e^{\eps n}$.
\end{enumerate} 
\end{lemma}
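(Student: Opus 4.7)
The plan is to extend the proof of Lemma~\ref{discreteMC} to a three-layer binomial cascade. The criticality condition $\rho\rho'>1$ of the original lemma is replaced by $\rho\rho'\rho''\rho'''>1$, reflecting that each time step of the process now involves three rounds of almost-independent binomial sampling rather than two, corresponding physically to the stars $\to$ connectors $\to$ stars route in the contact process. The two analytic ingredients remain Lemma~\ref{ldev} (Chernoff) for the positive-direction concentration, and the elementary Laplace bound
\[
\E[e^{-sX}]=(1-q(1-e^{-s}))^m\le\exp\bigl(-mq(1-e^{-s})\bigr),\qquad X\sim\mathrm{Bin}(m,q),
\]
for the supermartingale step.

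For item~(2), I fix $r_0>0$ small enough that $\rho+(1-r_0)\rho\rho'\rho''\rho'''>1$, then $r\in(0,r_0)$ with $r<\rho r_0$ and small enough that the truncations at $c$ and $c'$ stay inactive for $M_k\in[rn,r_0 n]$ (it suffices to impose $\rho' r_0\le c$ and $\rho''\rho'''\rho' r_0\le c'$). Starting from any $M_k\ge rn$, three successive applications of Lemma~\ref{ldev} yield, each with failure probability $e^{-cn}$ for some $c>0$, the concentrations $M'_k\ge(1-\delta)\rho M_k$, $M''_k\ge(1-\delta)^2\rho\rho'\rho'''M_k$, and $M_{k+1}-M'_k\ge(1-\delta)^3(n-M_k)\rho\rho'\rho''\rho'''M_k/(n+1)$. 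In the subregime $M_k\in[rn,r_0 n]$ the compound factor $\rho+(1-r_0)(1-\delta)^3\rho\rho'\rho''\rho'''$ drives $M_{k+1}$ above $M_k\ge rn$, whereas in the subregime $M_k\ge r_0 n$ the single lower bound $M_{k+1}\ge M'_k\ge(1-\delta)\rho r_0 n\ge rn$ already suffices. Hence $M_{k+1}\ge rn$ with error probability $e^{-c'n}$, and a union bound over $k\le e^{\eps n}$ completes item~(2).

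For item~(1), set $K:=\inf\{k\ge 0\colon M_k=0\text{ or }M_k\ge r'n\}$ and aim to show that $(e^{-lM_{k\wedge K}})_{k\ge 0}$ is a positive supermartingale for $l,r'>0$ small enough (replacing the item~(2) constant by $r\wedge r'$ if $r'<r$). Optional stopping then gives $\P(M_K=0)\le e^{-lm_0}$, which is item~(1). On $\{M_0<r'n\}$, the key inequality $\E[e^{-lM_1}\mid\F_0]\le e^{-lM_0}$ is obtained by iterating three nested conditional Laplace transforms: the $\F''_0\to M_1$ step gives $\E[e^{-lM_1}\mid\F''_0]\le\exp(-lM'_0-h_2 M''_0)$ with $h_2=(n-M_0)\rho''(1-e^{-l})/(n+1)$; the $\F'_0\to M''_0$ step then produces $\E[\,\cdot\,\mid\F'_0]\le\exp(-(l+h_3)M'_0)$ with $h_3=\lceil\rho'''n\rceil\rho'(1-e^{-h_2})/(n+1)$; and finally the $\F_0\to M'_0$ step yields $\E[\,\cdot\,\mid\F_0]\le\exp\bigl(-\rho(1-e^{-(l+h_3)})M_0\bigr)$. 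Since $h_3\sim l(1-r')\rho'\rho''\rho'''$ as $l\downarrow 0$ and $n\uparrow\infty$, the required inequality $\rho(1-e^{-(l+h_3)})\ge l$ reduces at first order to $\rho+(1-r')\rho\rho'\rho''\rho'''>1$, which holds for $r'$ small by the hypothesis $\rho\rho'\rho''\rho'''>1$.

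The main technical obstacle beyond the two-layer template of Lemma~\ref{discreteMC} is propagating the truncations at $c$ and $c'$ through the supermartingale step. Taking $r'<c/\rho'$ keeps the first truncation inactive for $M'_0\le M_0<r'n$, but $M''_0$ is not automatically below the threshold $c'(n+1)/\rho''$ at which the second truncation would engage, which would otherwise spoil the clean exponent $h_2$ appearing above. I plan to resolve this by enlarging the stopping set to trigger also on the event $\{M''_k\ge c'(n+1)/\rho''\}$: on this event Chernoff applied to $\mathrm{Bin}(n-M_k,c')$ already gives $M_{k+1}\ge r'n$ with high probability (a success), while off this event both truncations are inactive and the Laplace computation above applies verbatim, so the supermartingale property survives the modification.
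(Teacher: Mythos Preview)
Your proposal is correct and follows exactly the route the paper indicates: the authors omit the proof of Lemma~\ref{discreteMC2}, stating only that it ``is similar to that of Lemma~\ref{discreteMC}'' and ``just involves more calculation.'' Your extension of the two-layer Chernoff/Laplace argument to three layers, with the criticality condition $\rho\rho'\rho''\rho'''>1$ replacing $\rho\rho'>1$, is precisely what is meant, and you have correctly identified the one genuinely new difficulty---that $M''_k$ is only bounded from below and so the $c'$-truncation need not be inactive---together with a sound resolution via an enlarged stopping set (alternatively, one can absorb the contribution from $\{M''_k\ge c'(n+1)/\rho''\}$ as an additive $e^{-c''n}$ error dominated by the gap $e^{-lM_k}-e^{-\mathrm{factor}\cdot lM_k}$ once $r'$ is taken below $c'/(1+c')$).
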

The proof is similar to that of Lemma~\ref{discreteMC}. It just involves more calculation, which is not so informative, so we omit it. We can now apply this lemma with the parameters 
$$M_k=S_k, M'_k=S'_k, M''_k=C_k, \F'_k= \sigma(\F_k, S'_k), \F''_k=\sigma(\F'_k, C_k), n=\lfloor a N \rfloor,$$ 
$$\rho= e^{-1}, \rho'=e^{-2/3} (1-e^{-\kappa_0/3})\lambda a p(a,1)/12, \rho''=e^{-1/3} \lambda a p(a,1)/12,$$ and $\rho'''=(1-a)/a$, under the condition 
$\rho \rho' \rho'' \rho'''>1$, which is satisfied if $a<1/2$ and
\smash{$\lambda^2 a p(a,1)^2 > 288 e^2/(1-e^{-\kappa_0/3})$.} We now conclude the quick indirect spreading part of Theorem~\ref{teolower} just like the quick direct spreading part, with $M_{(ii)}=288 e^2/(1-e^{-\kappa_0/3})$.

\subsection{Delayed Direct Spreading}

\bigskip
Delayed direct spreading is a mechanism similar to quick direct spreading in the sense that the infection spreads directly from star to star. The main difference is that the infection is kept alive at a star on a longer 
time scale with the aid of connectors. A single vertex, if powerful enough, can survive a recovery event by infecting a connector which in turn infects it back before an updating event (where the connection is lost with a high probability) thus prologing the recovery cycle of stars. In contrast with the stars studied at \cite{BB+} which survive for an amount of time exponential in their degree, the survival time here is roughly linear in this parameter, which is explained by the cost of maintaining the right conditions on the network for this effect to take place.\smallskip

To begin our proof, for each $k\in\N$, $k\geq 1$ define $\bar{\Co}_k$ as
$$\bar{\Co}_k:=\{y\in\Co \colon [\mathcal{U}^{y}\cup\mathcal{R}^y]\cap[k,k+2]=\emptyset\}$$ 
that is, $\bar{\Co}_k$ is the set of all {\bf stable connectors} in the interval $[k,k+2]$. As each $y\in\Co,\;y>N/2$ belongs to \smash{$\bar{\Co}_k$} independently with probability at least $\theta=\exp(-2(\kappa_0 2^{\gamma\eta}+1))$, Lemma~\ref{ldev} shows that $\P(|\bar{\Co}_k|>\theta N/4)>1-e^{-cN}$ for some fixed $c>0$ and hence these events hold exponentially long. \emph{For the entire remainder of this section we therefore fix a realization of $\mathcal{U}^y,\mathcal{R}^y, y\in\Co$ such that 
$\{|\bar{\Co}_k|>\theta N/4\}$ holds exponentially long and all probabilities will be taken to be conditional on such a realization. }\smallskip

Next we 
denote by $\St_k$ the set of infected stars at time $kT$, and $S_k=|\St_k|$. Note that we use the same notation as before, though the length of a recovery cycle has been modified, from one to $T=T(a,\lambda)$. Our hope is that this will not confuse the reader, but rather stress the unity of the approach.



\subsubsection{Properties of stars}

As the probability $p_{x,y}$ of having a connection between a star~$x$ and a connector $y$ is bounded from below by \smash{$\sfrac1N {p(a,1)} \ge \sfrac1N {c_1 a^{-\gamma}}$}, we deduce that for given $t$, the number of connectors \smash{$y\in \bar{\mathscr C_{\lfloor t\rfloor}}$} connected to $x$ dominates a binomial random variable with parameters $\lceil \theta N/4\rceil$ and $c_1 a^{-\gamma}/N$. By Lemma~\ref{ldev}, one can deduce that
\begin{equation}
\label{lotconnectors}
\P\big(\big|\{y\in\bar{\Co}_{\lfloor t\rfloor},\;\{x,y\}\in \mathscr G^{\ssup N}_t\}\big|>c_1 \theta a^{-\gamma}/5\big)>1-e^{-c a^{-\gamma}},
\end{equation}
for some $c>0$, uniformly in $a\le 1/2$ and $N\geq c_1 a^{-\gamma}$. In particular, any star $x$ maintains sufficiently many neighbouring stable connectors for an exponentially long time.
\begin{definition}
\label{Tstable}
For any $T>0$ let
$$\bar{\mathcal{U}}^x_T\; :=\;\{0\}\cup\bigcup_{U_m^x\in\mathcal{U}^x\cap[0,T]}\big\{U_m^x+n\kappa_x^{-1} \colon n\in\N_0\cap[0,\kappa_x(U_{m+1}^x-U_m^x)]\big\}.$$
A star $x\in\St$ is $T$-{\bf stable} if 
\begin{itemize}
\item[(i)] $|\bar{\mathcal{U}}^x_T|<3\kappa_x T$ and, 
\item[(ii)] at all times $t\in\bar{\mathcal{U}}^x_T$, the vertex $x$ has at least $\frac{c_1 \theta}{5} a^{-\gamma}$ neighbours in $\bar{\Co}_{\lfloor t\rfloor}$.
\end{itemize}
\end{definition}

The set $\bar{\mathcal{U}}^x_T$ arises by adding points to $\mathcal{U}^x$ between consecutive updating events when these are further than $\kappa_x^{-1}$ units of time apart. Loosely speaking,
$T$-stability means that~$x$ does not update too much and that upon updating it has enough neighbouring stable connectors. The next result follows directly from \eqref{lotconnectors} and a large deviation argument for $|\bar{\mathcal{U}}^x_T|$, we omit its easy proof.
\begin{lemma}
\label{Tstableprop}
$\displaystyle\lim_{T\rightarrow\infty}\liminf_{N\rightarrow\infty} \inf_{x\in\St} \P(x\mbox{ is }T\mbox{-stable})=1.$
\end{lemma}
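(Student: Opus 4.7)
The plan is to verify conditions (i) and (ii) of Definition~\ref{Tstable} separately. Under the conditioning fixed earlier in this subsection the sets $\bar{\Co}_k$ are deterministic with $|\bar{\Co}_k|\geq\theta N/4$ for all relevant $k$, so the two conditions live on essentially disjoint sources of randomness: (i) involves only the updating Poisson process $\mathcal U^x$ of $x$, whereas (ii) involves the fresh connection Bernoullis drawn at each update of $x$.

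For (i), I would unpack the definition of $\bar{\mathcal U}^x_T$ to obtain the deterministic bound $|\bar{\mathcal U}^x_T|\leq 1+N_x+\kappa_x U^x_{N_x+1}$, where $N_x:=|\mathcal U^x\cap[0,T]|$ is Poisson with mean $\kappa_x T$ and the residual $U^x_{N_x+1}-T$ is exponential with rate $\kappa_x$ by the memoryless property. Since $\eta\geq 0$ and $x\in\St$, one has $\kappa_x=\kappa_0(N/x)^{\gamma\eta}\geq\kappa_0$ uniformly. A standard Chernoff bound on Poisson tails controls $N_x$ and the exponential tail controls the residual; together they give $\P(\text{(i) fails})\leq 2e^{-c\kappa_0 T}$, which tends to $0$ as $T\to\infty$ uniformly in $N$ and $x\in\St$.

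For (ii), the approach is to apply \eqref{lotconnectors} pointwise at each of the (at most $3\kappa_x T$) points of $\bar{\mathcal U}^x_T$ on the event of (i), and take a union bound to obtain
\[
\P(\text{(ii) fails})\;\leq\;3\kappa_x T\,e^{-c a^{-\gamma}}\;+\;\P(\text{(i) fails}).
\]
To make this uniform in $x\in\St$, I would sharpen \eqref{lotconnectors} for small indices by using the stronger lower bound $p(x/N,1)\geq c_1(x/N)^{-\gamma}$; redoing the underlying Chernoff estimate (Lemma~\ref{ldev}) with this sharper mean improves the per-time failure probability to $e^{-c(N/x)^\gamma}$. Since this decays exponentially in $(N/x)^\gamma$ while the prefactor $\kappa_x T=\kappa_0(N/x)^{\gamma\eta}T$ grows only polynomially in $N/x$, the union-bound estimate vanishes as $N/x\to\infty$.

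The main obstacle is the boundary regime $x\sim aN$, where $(N/x)^\gamma\sim a^{-\gamma}$ no longer grows with $N$ and the bound reduces to $\kappa_0 a^{-\gamma\eta}T\,e^{-c a^{-\gamma}}$. This expression does not vanish as $T\to\infty$ if $a$ is held fixed, so the lemma must be read under the parametrization $T=T(a,\lambda)$ of~\eqref{T(a,lambda)} together with $a=a(\lambda)$ chosen according to one of the strategy conditions of Theorem~\ref{teolower}: as $\lambda\downarrow 0$ both $T\to\infty$ and $a\downarrow 0$, and since $a^{-\gamma}$ then grows polynomially in $\lambda^{-1}$ while $\log T$ is only logarithmic, the exponential factor ultimately dominates the polynomial one, and the overall bound vanishes. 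Combining the two steps yields the stated limit.
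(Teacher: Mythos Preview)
Your approach matches the paper's sketch: the omitted proof is said to follow ``directly from \eqref{lotconnectors} and a large deviation argument for $|\bar{\mathcal{U}}^x_T|$,'' which is exactly your decomposition into (i) and (ii). Your handling of (i) via the bound $|\bar{\mathcal{U}}^x_T|\le 1+N_x+\kappa_x U^x_{N_x+1}$ and Poisson/exponential tails is correct, and your sharpening of \eqref{lotconnectors} from $e^{-ca^{-\gamma}}$ to $e^{-c(N/x)^\gamma}$ is both valid and necessary to make the union bound uniform over $x\in\St$ (since $\kappa_x$ can be as large as $\kappa_0 N^{\gamma\eta}$). One small point worth making explicit: at filler times the edge indicators are not ``fresh,'' but conditionally on $\mathcal U^x$ (and on the already-fixed $\mathcal U^y$, $y\in\Co$) the marginal law of each edge $\{x,y\}$ at any deterministic time is still Bernoulli$(p_{x,y})$, so \eqref{lotconnectors} applies pointwise and the union bound over the at most $3\kappa_x T$ times goes through.

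You are also right to flag the boundary regime $x\sim aN$: the resulting bound $3\kappa_0 a^{-\gamma\eta}T\,e^{-ca^{-\gamma}}$ does not vanish as $T\to\infty$ for fixed $a$, so the lemma cannot hold uniformly over \emph{all} $a<1/2$ if $T$ is treated as a free parameter. Your reading---that the statement is meant under the parametrisation $T=T(a,\lambda)$ of~\eqref{T(a,lambda)} with $\eta<1/2$, so that $T\to\infty$ forces $a^{-\gamma}\gtrsim T^{1/(1-2\eta)}$ and the exponential beats the polynomial---is the correct one, and is consistent with the paper's remark that the convergence is uniform ``in the possible values of $a<1/2$ and $\lambda<1$'' and with how the lemma is actually used downstream (Lemmas~\ref{starprop} and~\ref{longsurvival} invoke it only for $T=T(a,\lambda)$ large). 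The paper glosses over this point; you have identified and resolved it correctly.
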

\medskip

Note that the convergence when $T\to\infty$ is uniform in the possible values of $a< 1/2$ and $\lambda<1$. 
Since the convergence is uniform and the events $\{x$ is $T$-stable$\}_{x\in\St}$ are independent, we deduce that, when $T$ and $N$ are large, most stars will exhibit this property. We define next the concept of {\bf $[L,T]$-susceptibility} of~$x$, depending only on $\mathcal{U}^x, \mathcal{R}^x$, which 
loosely speaking means that recoveries of~$x$ are not too frequent, and not too close to its 
updating 
events. 

\begin{definition}
\label{suscept}
For any $L\in[0,T)$ we say that a star $x\in\St$ is {\bf $[L,T]$-susceptible} if
\medskip
\begin{enumerate}
\item[(i)] there are no recovery events in $[L,L+\kappa_x^{-1}]$ or $[T-\kappa_x^{-1},T]$,
\item[(ii)] $|\mathcal{R}^x\cap[L,T]|<2T$,
\item[(iii)] for every pair of consecutive updating times $L\leq t_1<t_2\in\bar{\mathcal{U}}^x_T$, such that\\
\noindent $\mathcal{R}^x\cap[t_1,t_2]\neq\emptyset$ we have
\begin{equation}
\label{espac}
(r_1-t_1)(t_2-r_2)>[\kappa_x^{2}T\log(T)]^{-1},
\end{equation}
where $r_1$ and $r_2$ are the first and last recoveries in $[t_1,t_2]$, respectively.
\end{enumerate}
\end{definition}

There is no reason most stars should be $[L,T]$-susceptible, however the probability of a $T$-stable $x\in\St$ having this property is bounded away from zero, if $T$ and $N$ are large.

\begin{lemma}
\label{LTsusceptibility}
There exists $q_1>0$ such that
$$\liminf_{T\rightarrow \infty}\liminf_{N\rightarrow\infty} \inf_{x\in\St}
\P\left(x\mbox{ is }[L,T]\mbox{-susceptible}\;\big|\; \mathcal{U}^x, \mathcal{C}^x\right)>q_1,$$
for all $L\in[0,T)$ and any realization of $\mathcal{U}^x, \mathcal{C}^x$ such that $x$ is $T$-stable.
\end{lemma}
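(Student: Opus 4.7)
The plan is to exploit the independence of the recovery process $\mathcal R^x$ from the conditioning data $(\mathcal U^x, \mathcal C^x)$. Since $[L,T]$-susceptibility depends only on $\mathcal R^x$ and on $\bar{\mathcal U}^x_T$ (which is determined by $\mathcal U^x$), the conditional probability reduces to a probability over the rate-one Poisson process $\mathcal R^x$ with $\bar{\mathcal U}^x_T$ regarded as fixed. I would then bound each of the three defining conditions (i), (ii), (iii) separately and combine them by a union bound.

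Conditions (i) and (ii) are elementary. For (i), using $\eta\ge 0$ and hence $\kappa_x\ge \kappa_0$, each of the two intervals has length $\kappa_x^{-1}\le \kappa_0^{-1}$, so by the void probability of a Poisson process the chance of no recovery in their union is at least $e^{-2/\kappa_0}$. For (ii), $|\mathcal R^x\cap[L,T]|$ is Poisson with mean at most $T$, and a Chernoff bound at level $2T$ gives failure probability $e^{-cT}$ for an absolute constant $c>0$, hence $o(1)$ as $T\to\infty$.

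Condition (iii) is the heart of the argument. For each pair of consecutive elements $(t_1,t_2)$ of $\bar{\mathcal U}^x_T$, with gap $\Delta:=t_2-t_1\le \kappa_x^{-1}$, set $A=r_1-t_1$ and $B=t_2-r_2$ on the event that there is at least one recovery in $(t_1,t_2)$. A short Poisson-process calculation shows that the unconditional joint distribution of $(A,B)$ has density $e^{-a-b}$ on the open triangle $\{a,b>0,\ a+b<\Delta\}$ (the $n\ge 2$ contribution), plus a singular piece concentrated on $\{a+b=\Delta\}$ (from $n=1$). Integrating on the sub-level set $\{ab<\delta\}$, with $\delta=[\kappa_x^2 T\log T]^{-1}$, yields a per-pair bound
$$\mathbb P\bigl((r_1-t_1)(t_2-r_2)<\delta,\ \mathcal R^x\cap(t_1,t_2)\ne\emptyset\bigr)\;\le\;\frac{C_1\delta}{\Delta}+C_2\,\delta\log\bigl(1+\Delta^2/\delta\bigr).$$
Summing over the at most $3\kappa_x T$ pairs afforded by the $T$-stability bound on $|\bar{\mathcal U}^x_T|$, and using $\Delta\le \kappa_x^{-1}$, the total failure probability for (iii) is of order $1/\log T + 1/\kappa_x$.

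Combining via inclusion--exclusion yields the lower bound $e^{-2/\kappa_0}-O(1/\kappa_x)-o(1)$, which is bounded below by some positive $q_1$ for all sufficiently large $T$. The main obstacle is the per-pair estimate for (iii): the area of $\{ab<\delta\}$ inside the triangle generates a logarithmic factor $\log(\Delta^2/\delta)\sim \log T$, precisely the factor that the $\log T$ in the definition of $\delta$ is engineered to absorb, so that the sum remains bounded rather than diverging. A secondary subtlety is the handling of pathological realizations of $\mathcal U^x$ with anomalously small gaps, where one must fall back on the trivial bound $\mathbb P(\text{fail})\le \mathbb P(\text{recovery in pair})\le \Delta_i$ for those short pairs to keep the per-pair bound from blowing up.
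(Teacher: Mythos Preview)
Your outline matches the paper's: condition on $(\mathcal U^x,\mathcal C^x)$, so that susceptibility depends only on the rate-one process $\mathcal R^x$, and treat (i), (ii), (iii) separately. However, the way you combine them does not yield a positive bound. You use a union bound, arriving at $e^{-2/\kappa_0}-O(1/\kappa_x)-o(1)$; but $1/\kappa_x$ does not tend to zero uniformly over $x\in\St$ (when $\eta=0$ one has $\kappa_x\equiv\kappa_0$ for every vertex), so for small $\kappa_0$ this is negative. The paper instead observes that (i), (ii), (iii) are all \emph{decreasing} events in $\mathcal R^x$ and invokes Harris' inequality to obtain the product bound $P(\text{all})\ge P(\text{(i)})\,P(\text{(ii)})\,P(\text{(iii)})$, which is positive whenever each factor is, regardless of how close to~$1$ the individual failure probabilities are.

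For condition (iii) itself, your per-pair estimate $C_1\delta/\Delta+C_2\delta\log(1+\Delta^2/\delta)$ is correct, but the claimed total $O(1/\log T+1/\kappa_x)$ is not justified: the $n=1$ contribution $C_1\delta/\Delta$ blows up for short gaps, and $T$-stability gives no lower bound on the~$\Delta_i$. Your fallback $\min(\Delta_i,C_1\delta/\Delta_i)$ is at best $\sqrt{C_1\delta}$, and summed over up to $3\kappa_xT$ pairs this gives order $\sqrt{T/\log T}\to\infty$. The paper circumvents this by obtaining a per-pair failure probability bounded by $3/(\kappa_xT)$ \emph{uniformly in~$\Delta$}, via a case split: if $r_1-t_i>1/(2\kappa_x)$ then failure of~\eqref{espac} forces $t_{i+1}-r_2<2\delta\kappa_x$, which has probability at most $2/(\kappa_xT\log T)$, and symmetrically; the remaining case $r_1-t_i,\,t_{i+1}-r_2\le 1/(2\kappa_x)$ is handled by an explicit double integral treating these two quantities as dominated by independent exponentials. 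The uniform bound then gives $P(\text{(iii)})\ge(1-3/(\kappa_xT))^{3\kappa_xT}\ge e^{-10}$ directly, with no need to control short gaps separately.
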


\begin{proof}
Fix a realization of $\mathcal{U}^x, \mathcal{C}^x$ such that $x$ is $T$-stable and notice that in this case $[L,T]$-susceptibility depends on the process $\mathcal{R}^x$ alone. In fact, all three conditions in 
Definition~\ref{suscept} are decreasing with $\mathcal{R}^x$. By Harris' inequality the three conditions are therefore positively correlated and hence $\P\left(x\mbox{ is }[L,T]\mbox{-susceptible}\;\big|\;\bar{\mathcal{U}}^x_T\right)$ is larger than the product of the individual probabilities, which we now calculate. It is easy to see that conditions~(i) and~(ii)  are 
independent of $\mathcal{U}^x, \mathcal{C}^x$ and their probability is bounded from below by $e^{-2}$ and $1/2$ respectively (the latter value is obtained from a rough Markov inequality).
For the bound on (iii) we take two consecutive \smash{$t_i,t_{i+1}\in \bar{\mathcal{U}}^x_T$} with \smash{$\mathcal{R}^x\cap[t_i,t_{i+1}]\neq\emptyset$} and find an upper bound for the event that \eqref{espac} fails at 
\smash{$[t_i,t_{i+1}]$}. We split this event in two scenarios:

\medskip

\emph{First} suppose $r_1-t_i>1/(2\kappa_x)$ 
so for \eqref{espac} to fail we necessarily have $t_{i+1}-r_2\leq 2/(\kappa_x T\log T)$. This way we can bound the probability from above by
$$1-e^{-2/(\kappa_x T\log(T))}\;\leq\;\frac{2}{\kappa_x T\log(T)}\;\leq\;\frac{2}{\kappa_x T}.$$
By symmetry the treatment for the case $t_{i+1}-r_2>1/(2\kappa_x)$ is the same.

\emph{Second} suppose $r_1-t_i\leq 1/(2\kappa_x)$ and $t_{i+1}-r_2\leq 1/(2\kappa_x)$. In this case, the random variables $r_1-t_i$ and $t_{i+1}-r_2$ are dominated by independent exponential random variables. We deduce that the probability of this scenario is bounded by
$$\int_0^{\frac{1}{2\kappa_x}}\int_0^{\infty}e^{-y-z}{\bf 1}_{\{z<[y\kappa_x^{2}T\log(T)]^{-1}\wedge[2\kappa_x]^{-1}\}} \, dz \, dy.$$
Dividing the integral as to which expression is smaller in the indicator function yields
$$\int_0^{\frac{2}{\kappa_x T\log(T)}}\int_0^{\frac{1}{2\kappa_x}}e^{-y-z}\, dz \, dy\;+\;\int_{\frac{2}{\kappa_x T\log(T)}}^{\frac{1}{2\kappa_x}}\int_0^{\frac{1}{y\kappa_x^{2}T\log(T)}}e^{-y-z}\, dz \, dy.$$
The first term is smaller than $1/(\kappa_x^{2}T\log(T))\leq 1/(\kappa_x T)$ while the second is equal to
$$\int_{\frac{2}{\kappa_x T\log(T)}}^{\frac{1}{2\kappa_x}}e^{-y}\big(1-e^{-\frac{1}{y\kappa_x^{2}T\log(T)}}\big) \, dy\;\leq\;\int_{\frac{2}{\kappa_x T\log(T)}}^{\frac{1}{2\kappa_x}}\frac{1}{y\kappa_x^{2}T\log(T)}\, dy\;\leq\;\frac{2}{\kappa_x T}.$$
We have deduced that the probability that \eqref{espac} fails in $[t_i,t_{i+1}]$ is bounded from above by $3/(\kappa_x T)$. We thus obtain a lower bound for the probability that (iii) holds of the form
$$\big(1-\sfrac{3}{\kappa_x T}\big)^{|\bar{\mathcal{U}}^x_T\cap[L,T]|}\;\geq\;\big(1-\sfrac{3}{\kappa_x T}\big)^{3\kappa_x T}\;\geq\;e^{-10},$$
where the first inequality holds by $T$-stability of $x$, and the second holds if $\kappa_x T$ is large. 
\end{proof}

\begin{definition}
A star $x\in\St$ is {\bf $T$-infectious} if
$$\big|\big\{t\in\mathcal{U}^x \cap [0,T]
\colon(\mathcal{U}^x\cup\mathcal{R}^x)\cap[t,t+\kappa_x^{-1})=\{t\}\big\}\big|>\sfrac12 {e^{-1-\kappa_0^{-1}}}\, \kappa_xT.$$
\end{definition}
This property does not imply that the star actually infects other vertices but rather that upon survival on $[0,T]$ it is infected for a sufficiently large proportion of the time, hence giving it a larger chance of doing so. A large deviation argument 
yields that
$\liminf_{T\to \infty}\liminf_{N\rightarrow\infty}\P(x\mbox{ is }T\mbox{-infectious})=1,$
so most of the stars will have this property. Gathering all the results obtained here we obtain the following lemma.

\begin{lemma}
\label{starprop}
There exists $q_1>0$ as in Lemma~\ref{LTsusceptibility} such that, for all large $T$ and $L\in[0,T)$,
$$\liminf_{N\rightarrow\infty} \inf_{x\in\St} \P(x\mbox{ is }T\mbox{-infectious, }T\mbox{-stable and }[L,T]\mbox{-susceptible})>q_1.$$
\end{lemma}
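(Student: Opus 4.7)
Write $A = \{x \text{ is } T\text{-stable}\}$, $B = \{x \text{ is } [L,T]\text{-susceptible}\}$ and $C = \{x \text{ is } T\text{-infectious}\}$. From the definitions, $A$ is measurable with respect to $(\mathcal{U}^x, \mathcal{C}^x)$ (given the already fixed realization of connector updates and recoveries making $|\bar{\Co}_k|>\theta N/4$ hold exponentially long), while $B$ and $C$ are measurable with respect to $(\mathcal{U}^x, \mathcal{R}^x)$. My plan is to combine the three ingredients via the elementary inequality
\[
\P(A\cap B\cap C)\;\ge\;\P(A\cap B) - \P(C^c),
\]
and to argue that the right-hand side is asymptotically at least $q_1$ as $T, N\to\infty$, uniformly in $x\in\St$.

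For $\P(A\cap B)$, the tower property gives $\P(A\cap B) = \E\bigl[\mathbf{1}_A\,\P(B\mid\mathcal{U}^x,\mathcal{C}^x)\bigr]$; since $A\in\sigma(\mathcal{U}^x,\mathcal{C}^x)$, Lemma~\ref{LTsusceptibility} bounds the conditional probability inside the expectation below by $q_1$ on the event $A$, for $T$ and $N$ large, so $\P(A\cap B) \ge q_1\,\P(A)$, which tends to $q_1$ by Lemma~\ref{Tstableprop}. For $\P(C^c)$, the large-deviation argument announced just before the statement is carried out as follows: each $t\in\mathcal{U}^x\cap[0,T]$ is declared "good" if it is the unique point of $\mathcal{U}^x\cup\mathcal{R}^x$ inside $[t, t+\kappa_x^{-1})$, which occurs with probability $e^{-1}\cdot e^{-\kappa_x^{-1}} \ge e^{-1-\kappa_0^{-1}}$, using the memoryless property for the $e^{-1}$ factor and the independence of $\mathcal{R}^x$ from $\mathcal{U}^x$ for the second factor. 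Since $\kappa_x T \ge \kappa_0 T$ is large, a Chernoff bound in the spirit of Lemma~\ref{ldev} shows that the number of good updates exceeds the threshold $\tfrac12 e^{-1-\kappa_0^{-1}}\kappa_x T$ except on an event of probability vanishing as $T\to\infty$, uniformly in $x\in\St$.

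Putting these together yields
\[
\liminf_{T\to\infty}\liminf_{N\to\infty}\inf_{x\in\St}\P(A\cap B\cap C)\;\ge\;q_1,
\]
which, after (if necessary) slightly shrinking $q_1$, gives the strict inequality asserted in the lemma. The main point requiring care is the measurability bookkeeping needed to apply Lemma~\ref{LTsusceptibility} after averaging: since $B$ is determined by $(\mathcal{U}^x,\mathcal{R}^x)$ alone we have $\P(B\mid\mathcal{U}^x,\mathcal{C}^x) = \P(B\mid\mathcal{U}^x)$, and since $A\in\sigma(\mathcal{U}^x,\mathcal{C}^x)$ the pointwise bound really does lift to $\P(A\cap B)\ge q_1\P(A)$ by the tower property. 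Beyond this the argument is a straightforward bookkeeping combination of the three preceding estimates, and I do not anticipate any real obstacle.
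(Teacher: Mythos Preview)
Your proof is correct and matches the paper's intent. The paper does not give an explicit proof of this lemma, merely stating that ``gathering all the results obtained here we obtain the following lemma''; your argument is a clean way to make this gathering precise, combining Lemma~\ref{Tstableprop}, Lemma~\ref{LTsusceptibility}, and the large-deviation claim for $T$-infectiousness via the elementary bound $\P(A\cap B\cap C)\ge \P(A\cap B)-\P(C^c)$ together with the tower property.

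One small remark on the large-deviation step for $T$-infectiousness: the events ``$t$ is good'' for different $t\in\mathcal{U}^x$ are not literally independent, since if two consecutive update times are within $\kappa_x^{-1}$ of each other the earlier one is automatically not good. A clean way to get the Chernoff bound you invoke is to first note that the indicators ``$t_{i+1}-t_i\ge \kappa_x^{-1}$'' are i.i.d.\ Bernoulli($e^{-1}$) over the ordered update times $t_1<t_2<\cdots$, and that the intervals $[t_i,t_i+\kappa_x^{-1})$ for the indices satisfying this are pairwise disjoint, so the additional $\mathcal{R}^x$-conditions are then conditionally i.i.d.\ Bernoulli($e^{-\kappa_x^{-1}}$). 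This two-stage thinning makes Lemma~\ref{ldev} directly applicable. This is surely what both you and the paper have in mind; it is worth one sentence to make the independence structure explicit.
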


\medskip

\subsubsection{Survival and spreading}
The star properties mentioned in the last lemma are useful to bound from below the probability that a star maintains the infection on $[0,T]$ and infects another star.
\begin{definition}
A star $x\in\St$ is {\bf $[L,T]$-infected} if $X_L(x)=X_T(x)=1$ and for all $t\in \bar{\mathcal{U}}^x_T\cap[L,T],\;X_t(x)=1$.
\end{definition}
If $x$ is $[0,T]$-infected, we say the infection is maintained at $x$ on $[0,T]$ (although the star may of course have recovered several times on this time interval). For the next lemma we take $T=T(a,\lambda)$ defined as in Theorem~\ref{teolower}.
\begin{lemma}
\label{longsurvival}
There exists $q_2>0$ independent of $N$, such that for all $\lambda$ and $a$ making $T=T(a, \lambda)$ sufficiently large, and for all $x\in\St$ and $L\in[0,T)$, 
\begin{equation}
\label{survupdate}
\P\big(x \text{ is } [L,T]\text{-infected}\;\big|\;X_L(x)=1,\;x \text{ is } T\text{-stable and }[L,T]\text{-susceptible}\big)>q_2.
\end{equation}
\end{lemma}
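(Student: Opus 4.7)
The plan is to decompose $[L,T]$ into subintervals determined by the points of $\bar{\mathcal U}^x_T\cap[L,T]$, obtain a uniform lower bound on the survival probability across each subinterval, and combine them via a union bound. Enumerate $\bar{\mathcal U}^x_T\cap[L,T]=\{t_0<t_1<\cdots<t_m\}$. By construction each gap has $t_{i+1}-t_i\le\kappa_x^{-1}$ and contains no actual update of $x$ in its interior. Hence any stable connector neighbour $y$ of $x$ at time $t_i$ remains a neighbour of $x$ throughout $[t_i,t_{i+1}]$ (neither endpoint of $\{x,y\}$ updates) and does not recover (by stability); by $T$-stability there are at least $n_x:=c_1\theta a^{-\gamma}/5$ such stable neighbours. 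The boundary segments $[L,t_0]$ and $[t_m,T]$ are recovery-free by $[L,T]$-susceptibility condition~(i), and gaps with no recovery of $x$ are also handled trivially.

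The heart of the argument concerns a gap $[t_i,t_{i+1}]$ containing recoveries, with first and last such times $r_1<r_2$. I lower bound the probability of the following event: some stable connector neighbour $y$ is infected by $x$ during $[t_i,r_1]$ and then re-infects $x$ during $[r_2,t_{i+1}]$, using disjoint portions of the infection process $\mathcal I_0^{x,y}$. For each single $y$ this has probability at least
\[
(1-e^{-\lambda(r_1-t_i)})(1-e^{-\lambda(t_{i+1}-r_2)})\;\ge\;c\,\lambda^2(r_1-t_i)(t_{i+1}-r_2),
\]
and the $n_x$ stable neighbours succeed independently. Hence the failure probability on the gap is at most $\exp\bigl(-c\,n_x\lambda^2(r_1-t_i)(t_{i+1}-r_2)\bigr)$. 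Condition~(iii) of $[L,T]$-susceptibility bounds the length product from below by $[\kappa_x^2 T\log T]^{-1}$, and substituting the definition~\eqref{T(a,lambda)} of $T$ together with $\kappa_x=\kappa_0(N/x)^{\gamma\eta}$ turns the per-gap failure bound into $T^{-\beta}$ for some $\beta>1$ uniform in $x\in\St$.

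Finally, condition~(ii) of susceptibility bounds the total number of recoveries on $[L,T]$, and hence the number of bad gaps, by $2T$. A union bound yields a total failure probability at most $2T^{1-\beta}\le 1/2$ once $T$ is large enough, proving the lemma with $q_2=1/2$. The main obstacle is the verification that the per-gap exponent $c\,n_x\lambda^2/(\kappa_x^2 T\log T)$ is at least of order $\log T$ uniformly in $x\in\St$: while the blanket bound $n_x\ge c_1\theta a^{-\gamma}/5$ from $T$-stability suffices for the weakest stars ($x\sim aN$), for strong stars with $x\ll aN$ and $\eta>0$ one must replace it by the sharper $n_x\gtrsim(x/N)^{-\gamma}$ that follows directly from~\eqref{condp}, and the particular power of $a$ appearing in~\eqref{T(a,lambda)} is tuned precisely so that this sharper bound absorbs the $(N/x)^{2\gamma\eta}$ factor carried by $\kappa_x^2$.
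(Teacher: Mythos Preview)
Your overall architecture is exactly that of the paper: break $[L,T]$ at the points of $\bar{\mathcal U}^x_T$, use $T$-stability to supply at least $c_1\theta a^{-\gamma}/5$ stable connector neighbours in each gap, use $[L,T]$-susceptibility to bound $(r_1-t_i)(t_{i+1}-r_2)$ from below, and bound the number of ``bad'' gaps by $2T$ via condition~(ii). The key event (infect a stable neighbour on $[t_i,r_1]$, get re-infected by it on $[r_2,t_{i+1}]$) is identical to the paper's.

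The genuine gap is in how you combine the gaps. You claim the per-gap failure probability is $T^{-\beta}$ with $\beta>1$, so that a union bound gives $2T^{1-\beta}\le 1/2$. But the constant in the definition~\eqref{T(a,lambda)} of $T$ is tuned exactly so that, for the weakest star $x\approx aN$ (where $\kappa_x\approx \kappa_0 a^{-\gamma\eta}$ and $n_x\approx c_1\theta a^{-\gamma}/5$), the exponent $c\,n_x\lambda^2/(\kappa_x^2 T\log T)$ equals $\log T$, not $\beta\log T$ with $\beta>1$. Thus the per-gap failure is $1/T$, and your union bound yields $2T\cdot T^{-1}=2$, which is useless. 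The paper avoids this by using a \emph{product} bound: the survival events in different gaps depend on disjoint pieces of the Poisson processes $\mathcal I_0^{x,y}$, so conditionally on the fixed realization of $\mathcal U^x,\mathcal R^x,(C^{x,y}_n)$ they are independent, and the survival probability is at least $(1-1/T)^{2T}\ge e^{-4}$ for large $T$, giving $q_2=e^{-4}$.

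Your final paragraph about needing $n_x\gtrsim (x/N)^{-\gamma}$ for strong stars is an astute observation---the paper's replacement of $\kappa_x^2$ by $\kappa_0^2 a^{-2\gamma\eta}$ does go the wrong way for $x\ll aN$---but it does not rescue your union bound (the boundary case $x\approx aN$ still gives $\beta=1$), and strictly speaking it is not available under the stated hypothesis: $T$-stability as defined only guarantees the blanket bound $c_1\theta a^{-\gamma}/5$, not the $x$-dependent $(x/N)^{-\gamma}$. To make that step rigorous one would have to sharpen Definition~\ref{Tstable}(ii) to an $x$-dependent lower bound on the number of stable neighbours.
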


\begin{proof} Fix a realization of $\mathcal{U}^x,\mathcal{R}^x$ and $(C_n^{x,y})_{y\in\Co}$ 
making $x$ a $T$-stable and $[L,T]$-susceptible star, and assume that $X_L(x)=1$. Let $t_j$ be the first event in $\bar{\mathcal{U}}^x_T$ after $L$, which by definition lies in $[L,L+\kappa_x^{-1}]$; from the first condition of $[L,T]$-susceptibility there are no recoveries in this interval and hence $X_{t_j}(x)=1$. Since $X_{t_j}(x)=1$, using conditional probabilities and the Markov property we can bound from below the probability in the statement by
\begin{equation}
\label{proba1}
\prod_{\substack{t_i\in\bar{\mathcal{U}}^x_T\cap[L,T]\\\mathcal{R}^x\cap[t_{i},t_{i+1}]\neq\emptyset}}\P(X_{t_{i+1}}(x)=1\;\big|\;X_{t_i}(x)=1),
\end{equation}
where the restriction in the product follows since we trivially have $\P(X_{t_{i+1}}(x)=1\;\big|\;X_{t_i}(x)=1)=1$ when there are no recoveries in $[t_{i},t_{i+1}]$. Now, in the case $\mathcal{R}^x\cap[t_{i},t_{i+1}]\neq\emptyset$, we define $r_1$ and $r_2$ as the first and last element in that intersection, respectively, and notice that a sufficient scenario for $X_{t_{i+1}}(x)=1$ is that
\begin{itemize}
\item $x$ infects some neighbour $y\in\bar{\Co}_{\lfloor t_i\rfloor}$ during $[t_i,r_1]$,
\item since $y\in \bar{\Co}_{\lfloor t_i\rfloor}$, it remains infected (and also a neighbour of $x$) up until time $t_{i+1}$,
\item $y$ infects $x$ back during $[r_2,t_{i+1}]$. 
\end{itemize}
The scenario above follows from the event
\begin{equation}
\label{event1}
\bigcup_{\heap{x\sim y \text{ at time }t_i}{y\in\bar{\Co}_{\lfloor t_i\rfloor}}} \Big\{\mathcal{I}_0^{x,y}\cap[t_i,r_1]\neq\emptyset\;\;\;\mbox{ and }\;\;\;\mathcal{I}_0^{x,y}\cap[r_2,t_{i+1}]\neq\emptyset\Big\},
\end{equation}
and using independence we can calculate its probability as
$$1-\prod_{\heap{x\sim y \text{ at time }t_i}{y\in\bar{\Co}_{\lfloor t_i\rfloor}}}
\big[1-\big(1-e^{-\lambda(r_1-t_i)}\big)\big(1-e^{-\lambda(t_{i+1}-r_2)}\big)\big].$$
Since $t_{i+1}-t_{i}<\kappa_x^{-1}$ the exponents in the expression above tend to zero and hence we can use the bound
$$(1-e^{-\lambda(r_1-t_i)})(1-e^{-\lambda(t_{i+1}-r_2)})\;\geq\;\frac{\lambda^{2}(r_1-t_i)(t_{i+1}-r_2)}{4}\;\geq\;\frac{\lambda^{2}}{4\kappa_x^{2}T\log T},$$
which follows from the third condition of $[L,T]$-susceptibility, to obtain that 
\eqref{event1} has probability at least
$$1-\prod_{\heap{x\sim y \text{ at time }t_i}{y\in\bar{\Co}_{\lfloor t_i\rfloor}}}
\big[1-\sfrac{\lambda^{2}}{4\kappa_x^{2}T\log(T)}\big]\;\geq\;1-\exp\left(- \frac{\lambda^{2}c_1 \theta a^{-\gamma}}{20 a^{-2\gamma\eta}\kappa^2_0 T\log(T)}\right)\;=\;1-\frac{1}{T},$$
where the inequality follows from the $T$-stability of $x$ and the monotonicity of $\kappa$ 
and the equality from the definition of $T$. This allows us to bound the argument in \eqref{proba1} by $(1-1/T)$, and from $[L,T]$-stability there are at most $2T$ recoveries, giving
$$|\{t_i\in\bar{\mathcal{U}}^x_T\cap[L,T]\colon
\mathcal{R}^x\cap[t_{i},t_{i+1}]\neq\emptyset\}|\;\leq\;2T,$$
so we finally obtain
$$\prod_{\substack{t_i\in\bar{\mathcal{U}}^x_T\cap[L,T]\\\mathcal{R}^x\cap[t_{i},t_{i+1}]\neq\emptyset}}\P(X_{t_{i+1}}(x)=1\;\big|\;X_{t_i}(x)=1)\;\geq\;\left(1-\sfrac{1}{T}\right)^{2T}\;\geq\;e^{-4},$$
which holds if $T$ is large, giving the result.
\end{proof}

Denote by $\St'_0 \subset \St_0$ the set of initially infected stars, that are also $T$-infectious, $T$-stable, $[0,T]$-susceptible, and $[0, T]$-infected. From Lemmas~\ref{starprop} and~\ref{longsurvival}, when $T$ and $N$ are large, conditionally on $S_0$, the random variable $S'_0=|\St'_0|$ dominates a binomial random variable with parameters $S_0$ and $q_1 q_2$.

Further, we bound from below the number of initially uninfected stars, that get infected by some star in $\St'_0$, and are still infected at time $T$.
\begin{lemma}
\label{spreadingStDD}
We have, uniformly in $\lambda<1$ and $a<1/2$,
\begin{align}\label{spreadingstDD}
\liminf_{T\to\infty}\liminf_{N\rightarrow\infty} \inf_{x\in\St \backslash \St_0} \P\big(\exists y\in{\St}_0',\;  t\in {\mathcal I}^{x,y}\cap[0,T]\;& \colon X_t(y)=1 \big| \St'_0 \big) 
> \sfrac{\lambda T p(a,a) S'_k}{8e^{1+1/\kappa_0} N} \wedge \sfrac 12.\,\,
\end{align}
\end{lemma}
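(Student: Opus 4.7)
The plan is to combine, for each $y\in\St'_0$, the three properties built into the definition of $\St'_0$: $T$-infectiousness yields many ``good'' update times of $y$; $[0,T]$-infection ensures $y$ is infected at those times; $T$-stability prevents the updating pattern of $y$ from being pathological. I then chain these estimates over the $S'_0$ initially good stars.

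First I fix $y\in\St'_0$ and, using its $T$-infectiousness, extract a collection of at least $\sfrac{1}{2}e^{-1-\kappa_0^{-1}}\kappa_y T$ update times $t\in \mathcal{U}^y\cap[0,T]$ with $(\mathcal{U}^y\cup\mathcal{R}^y)\cap(t,t+\kappa_y^{-1})=\emptyset$. Since $t\in \bar{\mathcal{U}}^y_T$, the $[0,T]$-infection of $y$ forces $X_t(y)=1$, and the absence of recoveries in $(t,t+\kappa_y^{-1})$ propagates this to $X_s(y)=1$ for all $s\in[t,t+\kappa_y^{-1})$.

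Next I bound below the probability of a transmission along the edge $\{x,y\}$ within a single such interval. With probability $1-e^{-\lambda/\kappa_y}$ the process $\mathcal{I}_0^{x,y}$ has a point $\tau$ in $[t,t+\kappa_y^{-1})$; independently of this point and of $\mathcal{U}^{x,y}$, the edge $\{x,y\}$ is present at time $\tau$ with probability $p_{x,y}\ge p(a,a)/N$, since the relevant $C^{x,y}_n$ is Bernoulli($p_{x,y}$) independent of everything else. Hence, for $\lambda$ small enough that $\lambda/\kappa_y\le 1$, the per-interval transmission probability is at least $\sfrac{\lambda p(a,a)}{2N\kappa_y}$. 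Events at distinct good update times of $y$ are independent, as they use disjoint $C^{x,y}_n$ indices and disjoint slices of $\mathcal{I}_0^{x,y}$, so
$$\P\big(y\text{ transmits to }x\text{ during some good interval}\big)\;\ge\;1-\exp\big(-\sfrac{\lambda p(a,a)T}{4N}\,e^{-1-\kappa_0^{-1}}\big),$$
where the $\kappa_y$ has cancelled, making the bound uniform in~$y$.

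Finally I take a union over $y\in\St'_0$. For different $y$ the randomness ($\mathcal{I}_0^{x,y}$, $C^{x,y}$, $\mathcal{U}^y$, $\mathcal{R}^y$) is disjoint, so conditionally on $\mathcal{U}^x$ and $\mathcal{R}^x$ the $S'_0$ failure events are independent. The probability that no $y\in\St'_0$ transmits to $x$ is therefore at most $\exp(-\sfrac{\lambda p(a,a)TS'_0}{4N}e^{-1-\kappa_0^{-1}})$, and the elementary bound $1-e^{-u}\ge (u/2)\wedge(1/2)$ produces precisely $\sfrac{\lambda T p(a,a) S'_0}{8e^{1+1/\kappa_0}N}\wedge\sfrac{1}{2}$, as required.

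The point I expect to need the most care is handling the conditioning on $\St'_0$ in the statement: the event ``$y$ is $[0,T]$-infected'' entering the definition of $\St'_0$ is coupled through the graphical representation with the very transmission events I am producing. However, both ``$y$ is $[0,T]$-infected'' and ``$y$ transmits to $x$ in $[0,T]$'' are monotone increasing in $\mathcal{I}_0^{x,y}$ and in the $C^{x,y}_n$'s (extra infection opportunities can only help $y$ stay infected, and obviously help transmission), so Harris' inequality shows that conditioning on $y\in\St'_0$ can only increase the transmission probability. The unconditional lower bounds derived above therefore remain valid after the conditioning, completing the argument.
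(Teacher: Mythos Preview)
Your proof is correct and follows essentially the same route as the paper: fix $y\in\St'_0$, use $T$-infectiousness to extract at least $\sfrac12 e^{-1-1/\kappa_0}\kappa_y T$ good intervals of length $\kappa_y^{-1}$ on which $y$ stays infected (by $[0,T]$-infection and absence of recoveries), bound the per-interval transmission probability by $p_{x,y}(1-e^{-\lambda/\kappa_y})$, multiply over intervals so that $\kappa_y$ cancels, and then multiply over $y\in\St'_0$ before applying $1-e^{-u}\ge (u/2)\wedge \tfrac12$. This is exactly the paper's argument.

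Your treatment is actually somewhat more careful than the paper's in two places. First, you justify the independence across intervals by noting that distinct good update times of $y$ force distinct $C^{x,y}_n$ indices and disjoint slices of $\mathcal I_0^{x,y}$; the paper simply asserts ``using independence''. Second, you explicitly address the coupling between the conditioning event $\{y\in\St'_0\}$ (which involves the process $X$ through $[0,T]$-infection) and the transmission events, invoking Harris' inequality via monotonicity in $\mathcal I_0^{x,y}$ and $C^{x,y}$. The paper's proof of this lemma does not discuss this point, though the same positive-correlation argument appears later in the delayed indirect spreading subsection. One small caveat: conditioning on the \emph{set} $\St'_0$ also carries the information that certain $z\in\St_0\setminus\St'_0$ are \emph{not} $[0,T]$-infected, a decreasing event; your Harris argument covers the increasing part of the conditioning, and the decreasing part is harmless here because it involves variables $(\mathcal I_0^{x,z},C^{x,z})$ disjoint from those used for transmission via $y\in\St'_0$, but it would be worth saying so explicitly.
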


\begin{proof} Fix $y\in{\St}_0'$ and consider $t_i\in\mathcal{U}^y\cap[0,T]$ such that $[t_i,t_{i}+\kappa_x^{-1}]\cap(\mathcal{R}^y\cup\mathcal{U}^y)=\emptyset$, which exists since $y$ is $T$-infectious. As \smash{$y\in{\St}_0'$}, it satisfies the condition in Lemma~\ref{longsurvival} and hence it is infected throughout the interval~$[t_i,t_{i}+\kappa_x^{-1}]$, which gives a small time interval for $y$ to infect $x$. To find a lower bound for the event $\{{\mathcal I}^{x,y}\cap[t_{i},t_{i}+\kappa_x^{-1}]\neq\emptyset\}$ it is enough that ${\mathcal I}_0^{x,y}\cap[t_{i},t_{i}+\kappa_x^{-1}]\neq\emptyset$ and that at the first infection event the edge $\{x,y\}$ belongs to the graph, but this happens with probability
$$p_{x,y}\big[1-e^{-\lambda\kappa_x^{-1}}\big]\;\geq\;1-e^{-\frac{\lambda p_{x,y}}{2\kappa_x}}.$$
Using independence we deduce
$$\P(\exists t\in \mathcal{I}^{x,y}\cap[0,T] \colon X_t(y)=1) \geq1-\!\!\!\prod_{[t_i,t_{i}+\kappa_x^{-1}]\cap\mathcal{R}^y=\emptyset}\exp\left(-\sfrac{\lambda p_{x,y}}{2\kappa_x}\right)\;\geq\;1-\exp\left(-\sfrac{\lambda p_{x,y} T}{4e^{1+1/\kappa_0}}\right),$$
where the last inequality is due to the fact that $y\in{\St}_0'$ and hence it is $T$-infectious. Finally, to deduce \eqref{spreadingstDD}, we use independence one last time to deduce
$$\begin{array}{rcl}\P\left(\exists y\in{\St}_0',\;t\in \mathcal{I}^{x,y}\cap[0,T],\;X_t(y)=1\right)&\geq&1-\exp\Big(-\frac{\lambda T}{4e^{1+1/\kappa_0}}\displaystyle\sum_{y\in{\St'}_0}p_{x,y}\Big)\\
& \geq& 1-\exp\left(-\sfrac{\lambda T p(a,a) S'_k}{4e^{1+1/\kappa_0} N} \right)
\\ & >& \sfrac{\lambda T p(a,a) S'_k}{8e^{1+1/\kappa_0} N} \wedge \sfrac 12.
\end{array}$$
\end{proof}

Using Lemma~\ref{spreadingStDD} we get that, when $T$ and $N$ are large, each $x\in\St\backslash{\St}_0$ has probability at least \smash{$\sfrac{\lambda T p(a,a) S'_k}{8e^{1+1/\kappa_0} N} \wedge \sfrac 12$} to receive an infection in $[0,T]$. Calling $T_x\in[0,T]$ the first time when this occurs, we can use Lemma~\ref{LTsusceptibility} to deduce that with probability at least $q_1$ the star $x$ is $T$-stable and $[T_x,T]$-susceptible. Now, being infected at time $T_x$, Lemma~\ref{longsurvival} gives that with probability at least $q_2$, the star $x$ will be infected at time $T$. Since all these events are independent for different values of $x$, we deduce that $S_1-S'_0$ dominates a 
binomial random variable with parameters $\lfloor a N \rfloor - S_0$ and 
$$q_1 q_2\left(\sfrac { \lambda T p(a,a) S'_k}{8e^{1+1/\kappa_0} N} \wedge \sfrac 12\right).$$

Finally, the same reasoning applies for the whole process $S_k$, which can now be studied similarly as for quick direct spreading. More precisely, if $T$ is large and under the hypothesis
$$ \sfrac {q^2_1 q^2_2 \lambda T a p(a,a) }{8e^{1+1/\kappa_0}}>1,$$
we can apply Lemma~\ref{discreteMC} with $n=\lfloor aN \rfloor$, $\rho=q_1 q_2$, $\rho'=\sfrac {q_1 q_2 \lambda T a p(a,a) }{8e^{1+1/\kappa_0}}.$

In that case, slow extinction and metastability follow just as before. To actually deduce the lower bound for the lower metastable density, Inequality~\eqref{lowdensityetapos}, we need not only that the events $S_k\ge r a N$ hold exponentially long for some $r>0$ (as we get from Lemma~\ref{discreteMC}), but that the events $|\{x\in\St: X_k(x)=1\}|\ge r' a N$ with $k\in\N$, hold exponentially long, for some $r'>0$. However it is clear from our proofs that we do get this, for some $r'<r$. 
This altogether proves the case of delayed direct spreading of Theorem~\ref{teolower}.
\medskip

\subsection{Delayed Indirect Spreading}

In the delayed indirect spreading strategy stars survive during long periods of time, and the spreading between stars does not occur directly but using connectors as intermediaries. The proof in this case takes most of the work already done for the other mechanisms, with the sole exception being that infections on connectors have a very limited lifespan, which forces us to be a little more careful.
\medskip

In particular, we introduce a more restrictive notion of stability for connectors. For every $k\ge 1$, we introduce 
$$ \dbar \Co_k := \{y \in \bar \Co_k \colon \mathcal U^y\cap[k-1,k] \ne \emptyset\},$$
and we use these connectors in the spreading mechanism. Similarly as for stable connectors, the events 
\smash{$\{|\dbar \Co_k|> \theta' N/4\}$} hold exponentially long, where 
\smash{$\theta'= \theta(1- e^{-\kappa_0})$.} We now \emph{work conditionally on a realization of $\mathcal{U}^y,\mathcal{R}^y, y\in\Co$ such that both
\smash{$\{|\bar{\Co}_k|>\theta N/4\}$} and 
\smash{$\{|\dbar \Co_k|> \theta' N/4\}$} hold exponentially long}.
Recall the concepts of $T$-stability and $[L,T]$-susceptibility as given in Definitions~\ref{Tstable} and~\ref{suscept}. We replace the concept of $T$-infectiousness used for spreading among stars, by the following definition.
\begin{definition}
\label{TCinfectious}
We say that $x\in\St$ is $T$-$\Co$-infectious if
$$\big|\big\{k\in\N\cap[0,T-1] \colon\mathcal{R}^x\cap[k,k+1]=\emptyset\big\}\big|\;>\;\sfrac{T}{3}.$$
\end{definition}

Using Lemma~\ref{ldev} we know that each $x\in\St$ is $T$-$\Co$-infectious with probability $1-e^{-cT}$ for some   $c>0$ and hence $\liminf_{T\to \infty}\liminf_{N\rightarrow\infty}\P(x\mbox{ is }T\mbox{-}\Co\mbox{-infectious})=1$, which is why we can obtain the following result, in analogy to Lemma~\ref{starprop}.

\begin{lemma}
\label{starprop2}
There exists $q_1>0$ as in Lemma~\ref{LTsusceptibility} such that for all large $T$ and $L\in[0,T)$,
$$\liminf_{N\rightarrow\infty} \inf_{x\in\St}
\P(x\mbox{ is }T\mbox{-}\Co\mbox{-infectious, }T\mbox{-stable and }[L,T]\mbox{-susceptible})>q_1,$$
and these events are independent for different $x\in\St$.
\end{lemma}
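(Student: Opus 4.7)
The plan is to mimic the proof of Lemma~\ref{starprop} with $T$-infectiousness replaced by $T$-$\Co$-infectiousness. First I would disentangle what random input each condition depends on. Under the standing conditioning on the connector processes $(\mathcal U^y, \mathcal R^y)_{y\in\Co}$, we have that $T$-stability is a function of $\mathcal U^x$, $\mathcal C^x$ and the edge indicators $(C^{x,y}_\cdot)_{y\in\Co}$; the property $[L,T]$-susceptibility is a function of $\mathcal U^x$ and $\mathcal R^x$; and $T$-$\Co$-infectiousness is a function of $\mathcal R^x$ alone. In particular $T$-$\Co$-infectiousness is independent of $T$-stability.

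Second, I would apply Harris' inequality exactly as in Lemma~\ref{LTsusceptibility}. Once we fix a realization of $(\mathcal U^x,\mathcal C^x)$ making $x$ $T$-stable, both the $[L,T]$-susceptibility event and the $T$-$\Co$-infectiousness event are decreasing functions of the recovery point process $\mathcal R^x$: enlarging $\mathcal R^x$ can only violate conditions (i)--(iii) of Definition~\ref{suscept}, and can only reduce the number of unit intervals $[k,k+1]\subset[0,T-1]$ containing no recovery. Hence, conditional on the $T$-stability realization, the two recovery-based events are positively correlated.

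Combining the three estimates then gives, for $x\in\St$,
\[
\P\bigl(x\text{ is }T\text{-}\Co\text{-infectious, }T\text{-stable, }[L,T]\text{-susceptible}\bigr)
\;\ge\; \P(T\text{-stable})\cdot \P(T\text{-}\Co\text{-infectious})\cdot q_1',
\]
where $q_1'$ is the bound from Lemma~\ref{LTsusceptibility}. The first factor tends to $1$ by Lemma~\ref{Tstableprop}, the second tends to $1$ by the Chernoff estimate recalled just above the lemma, and both convergences are uniform in $x\in\St$ and in the parameters $a,\lambda$. Choosing $T$ large enough so that each of these factors exceeds, say, $\sqrt{q_1'/q_1}$ yields the stated lower bound $q_1$ (slightly shrinking the constant supplied by Lemma~\ref{LTsusceptibility} if necessary).

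Finally, independence across $x\in\St$ is immediate because, given the conditioning on the connector dynamics, each of the three events involves only data labelled by the single vertex $x$, namely the Poisson processes $\mathcal U^x$, $\mathcal R^x$ and the edge indicators $(C^{x,y}_\cdot)_{y\ne x}$, and these families are mutually independent for different $x$. The only mildly delicate point is confirming the Harris monotonicity of the new condition $T$-$\Co$-infectiousness, which however is transparent from Definition~\ref{TCinfectious}; there is no genuine obstacle beyond that.
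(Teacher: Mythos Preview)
Your proof is correct and follows essentially the same approach the paper intends: the paper simply states that the lemma holds ``in analogy to Lemma~\ref{starprop}'' using the fact that $\P(x\text{ is }T\text{-}\Co\text{-infectious})\to 1$, and you have spelled out precisely this argument---conditioning on $(\mathcal U^x,\mathcal C^x)$, applying Harris' inequality in $\mathcal R^x$ as in Lemma~\ref{LTsusceptibility}, and invoking Lemmas~\ref{Tstableprop} and~\ref{LTsusceptibility}. The independence claim is also correctly justified, since for $x\in\St$ the relevant edge indicators are only those $(C^{x,y})_{y\in\Co}$, which are disjoint families for different stars.
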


We set $\St'_0\subset \St_0$ to be the subset of the initially infected stars that are also $T$-stable, $T$-$\Co$-infectious, $[0,T]$-susceptible and $[0,T]$-infected. If $a$ is small and $N$ large, its cardinality $S'_0$ dominates a binomial random variable with parameters $S_0$ and $q_1 q_2$. Moreover, for $x\in \St_0$, the event $x\in \St'_0$ is increasing in the processes $\mathcal I^{x,y}, y\ne x$, and thus by Harris' inequality, it is positively correlated with every event which is increasing in the processes $\mathcal I^{x,y}, y\ne x$. Further, let 
$$K:= \left\{k\in \N\cap[0,T] \colon \big|\{x \in \St'_0, \mathcal{R}^x\cap[k,k+1]=\emptyset\}\big| \ge S'_0/6\right\}.$$
We necessarily have
$$\sfrac {S'_0 T} 3 \le \sum_{k\in \N\cap[0,T]}\sum_{x\in \St'_0} 1_{\{\mathcal R^x \cap [k,k+1]=\emptyset\}} \le |K| S'_0 + (T-|K|) \sfrac {S'_0} 6,$$
where the left inequality follows from the definition of 
$T$-$\Co$-infectiousness, and the right inequality from the definition of $K$. It follows $|K|\ge T/5$. We use this set to search for times in which stars can infect sufficiently many stable connectors. More precisely, we let
$$ \mathscr P_0:= \{(k,y), k\in K, y \in \dbar \Co_k, \exists x \in \St'_0,\;  \mathcal I^{x,y}\cap[k,k+1]\ne \emptyset\}.$$

Conditionally on $k\in K$, $y\in \dbar{\Co_k}$ and $x \in \St'_0$, we have $\mathcal I_0^{x,y}\cap[k,k+1]\ne \emptyset$ with probability at least \smash{$1-e^{-\lambda}$}, thanks to the positive correlation with the event $x\in \St'_0$, and at time 
\smash{$t=\min (I_0^{x,y}\cap[k,k+1]) $} we have $\{x,y\} \in \mathscr G^{\ssup N}_t$ with probability at least $p_{x,y}\ge p(a,1)/N$, thanks to the update of $y$ on time interval $[k-1,k]$. 


Proceeding as before, we obtain that for large $T$ and large $N$ and conditionally on $S'_0$, on $K$ and on $(C_k)_{k\in K}$, the cardinality $P_0$ of $\mathscr P_0$ dominates a binomial random variable with parameters $$\left \lceil \sfrac {\theta' T N}{20}\right \rceil \mbox{  and } \sfrac {\lambda p(a,1)}{24N} S'_0 \wedge \sfrac 12.$$
Finally, we define
$\St''_0:= \{x\in \St \backslash \St_0 \colon \exists (k,y)\in \mathscr P_0,\; \mathcal I^{x,y}\cap [k+1, k+2]\ne \emptyset, X_T(x)=1\}$
and $S''_0= |\St''_0|$, and observe that $S_1 \ge S'_0 + S''_0$.
Conditionally on $\St_0$, $\St'_0$ and $\mathscr P_0$, we have, independently for each $(k,y)\in \mathscr P_0$,
$$\P(\mathcal I^{x,y}\cap [k+1, k+2]\ne \emptyset)\ge \sfrac{\lambda p(a,1)}{2N},$$
whence the probability that there exists $(k,y)\in \mathscr P_0$ such that $ \mathcal I^{x,y}\cap [k+1, k+2]\ne \emptyset$ is at least {$\sfrac {\lambda P_0 p(a,1)} {4N} \wedge \sfrac 12$.} Now, the probability that $x$ gets infected on $[0,T]$ is at least {$\sfrac {\lambda P_0 p(a,1)}{4N} \wedge \sfrac 12$}. Conditionally on this, writing $T_x$ the first time when it gets infected, we have that with probability at least $q_1 q_2$ (when $T$ and $N$ large), the star $x$ is $T$-stable, $[T_x,T]$-susceptible and $[T_x,T]$-infected, whence $X_T(x)=1$.
In other words, 
we can bound $S''_0$ from beow by a binomial random variable with parameters 
$$\lfloor aN\rfloor -S_0 \mbox{  and } q_1 q_2\big(\sfrac {\lambda p(a,1)}{4N} P_0 \wedge \sfrac 12\big).$$
Gathering the results, when $T$ is large we can use Lemma~\ref{discreteMC2} with $n=\lfloor aN \rfloor$, $\rho=q_1 q_2$, $\rho'=\sfrac {\lambda}{24}  a p(a,1)$, $\rho''=\lambda q_1 q_2 a p(a,1) /4$, and $\rho'''=\theta' T /20 a$, under the condition $\rho \rho' \rho'' \rho'''>1$. This condition is satisfied if $\lambda^2 T(a,\lambda) a p(a,1)^2$ is large enough, which concludes the proof of the delayed indirect spreading case of Theorem~\ref{teolower}.

\medskip

\section{Fast extinction and upper bounds}

\medskip

To obtain upper bounds we need to show that no mechanism can outperform the ones examined in the lower bounds. This cannot be done explicitly, but requires a more abstract supermartingale argument, which we now introduce. We start by coupling our process to a simpler process which is a stochastic upper bound.  
%
%

\subsection{A  coupling}
We construct a coupling between the contact process on the dynamic network, described by the pair of processes~$(X,\mathscr G^{\ssup N})$, and a process~$Y$, 
which we call the `wait-and-see' process. 
The process $(Y_t\colon t>0)$ takes values in $\{0,1\}^N \times \{0,1\}^{N\otimes N}$, where $N \otimes N$ is the set of potential edges, ie.\ unordered pairs of distinct vertices in $\{1,\ldots,N\}$.  We say a vertex $x$ is infected at time $t$ (for the wait-and-see process) if $Y_t(x)=1$, and we say a potential edge $\{x,y\}$ is \emph{revealed} at time~$t$ if $Y_t(x,y)=1$. Informally, a potential edge is unrevealed at time $t$ if we have no information about its presence in the dynamic network $\mathscr G^{\ssup N}_t$.
%
The wait-and-see model evolves according to the following rules:
\begin{itemize}
\item Every infected vertex $x$ recovers at rate 1.
\item If $x$ is infected then it infects every uninfected vertex $y$, 
\begin{itemize}
\item with rate $\lambda$ if $\{x,y\}$ is revealed (ie.\ if $Y_t(x,y)=1$) and 
\item with rate $\lambda p_{x,y}$ if it is unrevealed (ie.\ if $Y_t(x,y)=0$). 
\end{itemize}
In the latter case, when $x$ infects $y$, the value of $Y_t(x,y)$ immediately turns to 1. 
\item If $x$ and $y$ are both infected and $\{x,y\}$ is unrevealed, it gets revealed at rate~$\lambda p_{x,y}$.
\item Finally, each vertex updates at rate $\kappa_x$. Updating of $x$ means that all 
its adjacent potential edges turn to unrevealed.
\end{itemize}

\begin{lemma}
Fix deterministic initial conditions $X_0(v)\le Y_0(v)$ for all $v\in\{1,\ldots, N\}$.
There exists a coupling of
\begin{itemize}
\item the dynamic random network $(\mathscr G^{\ssup N}_t \colon t\geq 0)$, 
\item the original infection process on this network $(X_t \colon t\ge0)$, and 
\item the wait-and-see process $(Y_t \colon t\ge 0)$, started from vertices in $Y_0$ infected and all its edges unrevealed, 
\end{itemize}
such that, at all times~$t\ge0$, we have  $X_t(v)\le Y_t(v)$ for all $v\in\{1,\ldots, N\}$, 
and every revealed edge is an edge in \smash{$\mathscr G^{\ssup N}_t$.}
\end{lemma}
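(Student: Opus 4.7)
I would realize all three processes on a common probability space via an enriched graphical representation in the style of Section~3, sharing the update Poisson processes $\mathcal U^x$ and the recovery Poisson processes $\mathcal R^x$ between $X$ and $Y$. An event $t\in\mathcal U^x$ simultaneously re-samples the edges adjacent to~$x$ in $\mathscr G^{\ssup N}$ (according to the Bernoullis $C_n^{x,y}$) and turns all those edges unrevealed in~$Y$, while an event $t\in\mathcal R^x$ heals~$x$ in both $X$ and~$Y$. Since $X_0\le Y_0$ by hypothesis and every edge starts unrevealed, the two target invariants $X_t(v)\le Y_t(v)$ and $\{\text{revealed in }Y_t\}\subseteq\mathscr G^{\ssup N}_t$ hold at $t=0$.

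For the infection/reveal dynamics along each potential edge $\{x,y\}$, I would drive $X$ and $\mathscr G^{\ssup N}$ via the usual $\mathcal I_0^{x,y}$ and $C_n^{x,y}$ of Section~3, and drive $Y$ from the same atoms according to the following rule: at each $t\in\mathcal I_0^{x,y}$, if $\{x,y\}$ is revealed in $Y_{t^-}$ then $Y$ performs a rate-$\lambda$ infection attempt; if $\{x,y\}$ is unrevealed and at least one endpoint is $Y$-infected, $Y$ consults $C_n^{x,y}$ and, whenever $C_n^{x,y}=1$, reveals the edge and (if exactly one endpoint is $Y$-infected) transmits the infection. The only non-trivial step in propagating the invariants is an $X$-infection along~$\{x,y\}$ at~$t$: here $C_n^{x,y}=1$ necessarily, and by induction $x\in X_{t^-}\subseteq Y_{t^-}$, so the rule above forces $y\in Y_t$, via either the already-revealed branch or a simultaneous reveal-and-infect; in both cases any newly revealed edge is an actual edge of $\mathscr G^{\ssup N}_t$. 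A routine induction over the finitely many atoms in any bounded time interval, strictly analogous to the one underpinning Proposition~\ref{dualprop}, completes the verification.

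The main technical obstacle is to make sure that $Y$ marginally has the Markov law of the wait-and-see process; reusing a single $C_n^{x,y}$ across all attempts in one update interval does not immediately produce the memoryless rate $\lambda p_{x,y}$ for reveals. To remedy this I would enlarge the graphical representation by attaching an independent Bernoulli mark to each atom of $\mathcal I_0^{x,y}$ and building the $C_n^{x,y}$'s jointly with these marks (together with an extra piece of adjustment randomness) in such a way that each $C_n^{x,y}$ retains its $\mathrm{Bernoulli}(p_{x,y})$ marginal, no mark can succeed unless the underlying edge is present in $\mathscr G^{\ssup N}$, and the point process of successful marks on any unrevealed, $Y$-active pair is Poisson with rate $\lambda p_{x,y}$. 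This joint construction of marks and edge indicators is the delicate step; once it is in place the invariants continue to hold by the event-by-event analysis above, and all three marginal laws come out correctly.
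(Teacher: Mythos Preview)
The paper does not give a proof of this lemma at all; it simply refers to Proposition~6.1 of \cite{JM15}. So there is nothing in the paper to compare your argument against, and your sketch has to stand on its own.

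Your overall architecture---shared $\mathcal U^x$, $\mathcal R^x$, $\mathcal I_0^{x,y}$, and an event-by-event induction on the two invariants---is exactly right, and you have correctly located the one genuine difficulty. But the joint construction you propose for that difficulty is impossible as stated. You ask for $C_n^{x,y}$ to keep its Bernoulli$(p_{x,y})$ marginal, for \emph{every} successful mark to force $C_n^{x,y}=1$, and for the successful marks (while the pair is unrevealed and $Y$-active) to form a Poisson process of rate $\lambda p_{x,y}$. Condition on an update interval of length $T$ with the pair $Y$-active throughout: by the Poisson requirement, $\P(\text{some mark succeeds})=1-e^{-\lambda p_{x,y}T}$, and by your containment this is at most $\P(C_n^{x,y}=1)=p_{x,y}$; for $T$ large this fails. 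In other words, once a first look has returned $C_n=0$, a memoryless reveal clock for $Y$ must still be allowed to ring later in the same interval, and at that ring the invariant ``revealed $\Rightarrow$ present in $\mathscr G^{\ssup N}$'' is necessarily violated.

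A workable construction is to reverse the dependence: attach i.i.d.\ Bernoulli$(p_{x,y})$ marks to the atoms of $\mathcal I_0^{x,y}$, let $Y$ reveal at the first $Y$-active atom whose mark equals~$1$, and \emph{define} $C_n^{x,y}$ to be the mark at the first $Y$-active atom in the interval (a fresh Bernoulli if there is none). A stopping-time argument shows the family $(C_n^{x,y})$ is i.i.d.\ Bernoulli$(p_{x,y})$ and independent of $(\mathcal U,\mathcal R,\mathcal I_0)$, so $(\mathscr G^{\ssup N},X)$ has the correct law; $Y$ has the correct law by thinning; and $X\le Y$ is preserved because $C_n=1$ forces the edge to be revealed already at the first $Y$-active atom, hence before any $X$-active atom. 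What this construction does \emph{not} deliver is the literal second invariant: if $C_n=0$ and a later mark succeeds, $Y$ reveals an absent edge. Since the only use of the lemma in Section~5 is the domination $X\le Y$, this is harmless for the applications, but it does mean you should either weaken the second clause (it holds up to the first $Y$-look) or consult \cite{JM15} for the exact formulation intended there.
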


The proof of this lemma is similar to the proof of Proposition~6.1 in \cite{JM15},
so we omit it.




\subsection{Proof of Theorem \ref{teoupper}.}

In this section we prove Theorem~\ref{teoupper} for the vertex component of~$Y$, and hence for $X$ since we have that $X\leq Y$ stochastically. We use the function~$S$, given in the assumptions of the theorem, to define a function~$m_t$ which, based on the state of $Y$, attaches a score to every vertex. We take the accumulated score of the vertices in the network, raise to the power~$\delta$, and subtract a suitable drift to get a supermartingale. We then exploit optional stopping at the extinction time to get upper bounds and prove both statements of the theorem.
\medskip

To prove the first statement, assume there exists $S$ and $D$ satisfying \eqref{condD}, \eqref{defiT} and \eqref{condS1}, and introduce the notation $s(x)=S(x/N)$ and $t(x)=c[T_{\lambda}(x/N)\kappa_x]^{-1}s(x)$, for vertices 
$x\in \{1,\ldots, N\}$, where \smash{$c=\min\{\kappa_0,\frac{\kappa_0^2}{16 c_2},\frac{1}{4}\},$}
which by hypothesis is larger than~$4D$. Given these functions, we define the score of a configuration as $$M_t:=\sum_{x=1}^N m_t(x),$$ where\vspace{-3mm}
$$
m_t(x)= \left\{ \begin{array} {ll}
s(x) + (2\kappa_x^{-1} \lambda N_t(x) \wedge \frac 12) (2 t(x)) &\text{ if }Y_t(x)=1, \\
(2\kappa_x^{-1} \lambda N_t(x) \wedge 1) (s(x)+t(x))& \text{ if }Y_t(x)=0.
\end{array}\right.
$$
and \smash{$N_t(x)=\sum_{y\neq x}Y_t(x,y)$} is the number of revealed neighbours of $x$ at time~$t$. 
\smallskip

Even though it may seem a bit obscure at first glance, the score is actually natural; every infected vertex has a base score of $s(x)$ or $0$ (depending on whether it is infected or not) and $m_t(x)$ increases linearly on the amount of its revealed neighbours, which reflects the fact that revealed neighbours make the propagation of the infection easier. In both cases the score grows linearly up until some maximal cap at which $m_t$ is the same for infected and non-infected vertices; a natural choice since from a certain amount of revealed neighbours on, we can think of vertices as permanently infected.\smallskip

Our aim is to prove that $Z_t:=M_t^{\delta}+\delta\omega t$ defines a supermartingale, for some suitable~$\omega$, by showing that the expected infinitesimal change of $M_t$ is less than 
\smash{$-\omega M_t^{1-\delta}$. } We start with a few basic observations on the score function $m_t$ and on the functions $t$ and $s$.
\begin{itemize}
\item From its definition and our choice of $c$, we have $t(x)\leq s(x)$ and from the hypothesis on $S$, we have that \smash{$s(x)^{1-\delta}\leq \frac{\mathfrak{c}}{c}\kappa_xt(x)$.}
\item The score of a vertex is monotone with respect to the value of $Y_t(x)$ and of $N_t(x)$. 
\item The maximal value $m_t(x)= s(x) + t(x)$ is obtained if either $Y_t(x)=1$ and $N_t(x)\ge \lambda^{-1} \kappa_x/4$, or $Y_t(x)=0$ and $N_t(x)\ge \lambda^{-1} \kappa_x/2.$ 
\end{itemize}
Based on these observations, we can easily obtain upper bounds for the expected infinitesimal change in the score $m_t(x)$, knowing the infection process up to time $t$. We give the bounds depending on the values of $Y_t(x)$ and $N_t(x)$.\medskip

$\mathbf{ (i)\ Y_t(x)=1,\ N_t(x)\ge \lambda^{-1}\kappa_x/4.}$ In this case the score of $x$ can only decrease (or remain unchanged) with each possible change, so we obtain the bound considering only an update event at $x$, which yields
$$\frac 1 {dt} \E[m_{t+dt}(x)-m_t(x) \vert \F_t]\le -\kappa_x t(x).$$

$\mathbf{(ii)\ Y_t(x)=1,\ N_t(x)< \lambda^{-1}\kappa_x/4.}$ 
In this case we can bound the infinitesimal change by the expression
$$\Big(2\kappa_x^{-1}\lambda N_t(x)[s(x)-t(x)]-s(x)\Big)+4\lambda^{2}\kappa_x^{-1}t(x)\sum_{y \colon Y_t(x,y)=0}p_{x,y},$$
where the first term comes from the recovery at $x$ and the second one from the possible revealing of a neighbouring edge. As $t(x)\le s(x)$ and $N_t(x)< \lambda^{-1}\kappa_x/4$ the first term is bounded by 
$$-\frac{s(x)+t(x)} 2\le -\frac {s(x)} 2 \le -\frac 1 {2c} \kappa_x t(x) T_\lambda(\sfrac x N).$$ 
 On the other hand, since $\sum_{y}p_{x,y}$ can be bounded by $\int p(\sfrac x N, t) dt \le c_2\left(\sfrac{x}{N}\right)^{-\gamma}$, we can bound the second term by
$$4 c_2\kappa_0^{-2}(\kappa_x t(x))\left[\lambda^{2}\left(\frac{x}{N}\right)^{-\gamma+2\gamma\eta}\right]\;\leq\;4 c_2\kappa_0^{-2}\kappa_x t(x)T_{\lambda}\left(\sfrac{x}{N}\right),$$
so by our choice of $c$ we obtain
$$\frac 1 {dt} \E[m_{t+dt}(x)-m_t(x) \vert \F_t]\le -\frac{\kappa_x t(x) T_\lambda(\frac x N)}{4c}\le -\kappa_x t(x).$$

$\mathbf{(iii)\ Y_t(x)=0,\ N_t(x)\ge \lambda^{-1}\kappa_x/2}$. As in the first scenario, the score is maximal in this case. We obtain the bound again considering only an update event at $x$, which yields
$$\frac 1 {dt} \E[m_{t+dt}(x)-m_t(x) \vert \F_t]\le -\kappa_x m_t(x).$$

$\mathbf{(iv)\ Y_t(x)=0,\ N_t(x)\le \lambda^{-1}\kappa_x/2}$. In this case we can bound every positive increment of $m(x)$ by the maximal score $s(x)+t(x)$, hence we can bound the infinitesimal change by
$$-\kappa_x m_t(x)
+ \lambda N_t(x)[s(x)+t(x)]
+ \sum_{y \colon Y_t(y)=1}\lambda p_{x,y}[s(x)+t(x)].$$
where the first term comes from possible updates of $x$, the second from infections coming through neighbouring revealed edges and the third from infections coming through unrevealed edges. Since $\ N_t(x)\le \lambda^{-1}\kappa_x/2$, the second term is exactly $\frac{\kappa_x}{2} m_t(x)$ and hence, since $t(x)\leq s(x)$, we obtain
$$\frac 1 {dt} \E[m_{t+dt}(x)-m_t(x) \vert \F_t]\le-\frac{\kappa_x}{2} m_t(x)
+ 2\lambda\sum_{y\colon Y_t(y)=1} p_{x,y}s(x).$$

Now, we can consider the whole score, and write
%
\begin{align*}
\frac 1 {dt} & \E[M(t+dt) -M(t)\vert \F_t]= \sum_x  \frac 1 {dt} \E[m_{t+dt}(x)-m_t(x) \vert \F_t] \\
& \le \sum_{\heap{x\colon Y_t(x)=0}{N_t(x)\ge \lambda^{-1} \kappa /2} } -
\kappa_x m_t(x) 
+ \sum_{x \colon  Y_t(x)=1} -\kappa_x t(x) \\
& + \sum_{\heap{x\colon Y_t(x)=0}{N_t(x)< \lambda^{-1} \kappa /2}} -\frac{\kappa_x}{2} m_t(x) 
 + \sum_{\heap{x\colon Y_t(x)=0}{N_t(x)< \lambda^{-1} \kappa /2}} 2\lambda \sum_{y\colon Y_t(y)=1}  p_{x,y}s(x).
\end{align*}
For the last term, we can reverse the role of $x$ and $y$ obtaining the expression 
$$\sum_{x \colon Y_t(x)=1} 2\lambda \sum_{y} p_{x,y}s(y)\;\leq\;2 D\sum_{x \colon Y_t(x)=1}\frac{s(x)}{T_{\lambda}(x/N)}\;\leq\;\frac 12 \sum_{x \colon Y_t(x)=1} \kappa_x t(x),$$
where the first inequality comes from our hypothesis on $S$ for sufficiently large $N$.

We thus arrive at
\begin{equation}\label{ineqmart}\frac 1 {dt} \E[M(t+dt)-M(t)\vert \F_t]\le 
-\;\frac{1}{2}\sum_{x \colon Y_t(x)=0}\kappa_x m_t(x)\;- \frac{1}{2}\sum_{x \colon Y_t(x)=1}  \kappa_x t(x),
\end{equation}
which is clearly negative and hence $(M(t) \colon t\geq 0)$ is a supermartingale. To show that $(Z_t \colon t\ge0)$ is a supermartingale note that the second term satisfies
\begin{equation}\label{step1}-\;\frac 12 \sum_{x\colon Y_t(x)=1}\kappa_xt(x)\leq  -\; \frac 12 \sum_{x \colon Y_t(x)=1}\frac{c}{\mathfrak{c}}s(x)^{1-\delta}\leq -\;\frac{c}{4\mathfrak{c}}\sum_{x \colon Y_t(x)=1}m_t(x)^{1-\delta}.
\end{equation} 
On the other hand, since $S$ is bounded from below by some positive constant, say $s_0$, we can bound the first term as follows,
$$-\;\frac{1}{2}\sum_{x\colon  Y_t(x)=0}\kappa_x m_t(x)\;\leq\;-\;\frac{1}{2}\sum_{\substack{x\colon Y_t(x)=0\\ m_t(x)\neq 0}}2\lambda s(x)\;\leq\;-\;\lambda s_0^{\delta}\sum_{\substack{x\colon Y_t(x)=0\\ m_t(x)\neq 0}}s(x)^{1-\delta}$$
where the last inequality comes from $s(x)/s_0>1$. Since $m_t(x)\leq 2s(x)$ we deduce that this term is bounded by \smash{$-\frac12 \lambda s_0^{\delta}\sum_{x: Y_t(x)=0}m_t(x)^{1-\delta}$} so, together with \eqref{step1} we conclude that there exists some $\omega>0$ depending on $\lambda$ but not on $N$ such that
\begin{equation}\label{desM}\frac 1 {dt} \E[M(t+dt)-M(t)\vert \F_t] \le - \omega\sum_x m_t(x)^{1-\delta}\le -\omega M(t)^{1-\delta},\end{equation}
where the last inequality is due to $0<\delta<1$, and hence \smash{$Z_{t\wedge T_{\rm ext}}$} defines a positive supermartingale which converges almost surely to $Z_{T_{\rm ext}}$. Since $T_{\rm ext}$ is increasing in the initial condition of $Y$ it is enough to take $Y(0)=1$ to prove the theorem. We infer from the optional stopping theorem that
\begin{equation}
\label{desfinalproof1}
\delta \omega\, \E[T_{\rm ext}]\;=\;\E[Z_{T_{\rm ext}}]\;\leq\;\E[Z_0]\;=\;\E\big[M_0^{\delta}\big]\;=\;N^{\delta}\Big[\frac{1}{N}\sum_{x=1}^{N}s(x)\Big]^{\delta},
\end{equation}
but the expression inside the brackets converges to $\int_0^1 S(x)dx$ which is a fixed constant and hence the first statement of Theorem~\ref{teoupper} is proved.\medskip

In order to prove the second statement we use the duality described in Proposition~\ref{dualprop} 
to deduce \smash{$I_N(t)=\frac{1}{N}\sum_{x=1}^N\P_x(X_t\not=0)$} where $\P_x$ corresponds to the law of the process with initial condition $X_0=\delta_x$. Since $Y_t$ stochastically bounds $X_t$ from above, $I_N(t)\leq$ \smash{$\frac{1}{N}\sum_{x=1}^N\P_x(t<T_{\rm ext})$}, where in this context $T_{\rm ext}$ is the extinction time of $Y$.
Defining $T_{\rm hit}$ as the first time that $Y_t(x)=1$ for some $x\leq \lceil aN\rceil$, and $T:=T_{\rm ext}\wedge T_{\rm hit}$, we obtain 
\begin{equation}\label{des1fin2} \P(t<T_{\rm ext})\le \P(T_{\rm hit}<T_{\rm ext})+\P(t<T),
\end{equation}
which leads us to use the stopped process $Y_{t\wedge T}$ instead of $Y_t$. 

We extend the function $S$ defined on $[a,1]$ to the whole interval $[0,1]$ by setting 
$$\bar S(x) = S(x\vee \sfrac {\lceil a N\rceil} N).$$ This function satisfies condition \eqref{defiT} for $x\ge a$ as well as condition \eqref{condS1} for $x\in\{\lceil a N \rceil +1, \ldots, N\}$. 
A close look at the latter proof shows that if we start with $Y_0=0$ on $\{1,\ldots, \lceil aN\rceil\}$ and define $s$, $t$, $m$, and $M$ and $Z$ as before, then the stopped processes $M_{t\wedge T}$ and $Z_{t\wedge T}$ are positive supermartingales.
As $M_{t\wedge T}$ converges almost surely to $M_{T}=M_{T_{\rm hit}}{\bf1}_{\{T_{\rm hit}<T_{\rm ext}\}}\geq s(\lceil aN\rceil){\bf1}_{\{T_{\rm hit}<T_{\rm ext}\}}$, by the optional stopping theorem we get 
\begin{equation}\label{des2fin2}
\P_x(T_{\rm hit}<T_{\rm ext})\le \frac {\E_x(M_0)} {s(aN)}=\frac {s(x)} {s(\lceil aN\rceil)}.
\end{equation}
On the other hand, since $Z_{t\wedge T}$ is a positive supermartingale, it converges almost surely to \smash{$Z_T=M_T^{\delta}+\delta\omega T\geq \delta\omega T,$} so by the optional stopping theorem,
\begin{equation}\label{des3fin2}\P_x(t<T)\;\leq\;\sfrac{1}{t}\E_xT\;\leq\;\sfrac{1}{\delta\omega t}\E_x[Z_0]\;=\;\sfrac{s(x)^{\delta}}{\delta\omega t}.
\end{equation}
Gathering \eqref{des1fin2}, \eqref{des2fin2} and \eqref{des3fin2}, and bounding the probability of survival by 1 whenever $x\leq \lceil aN\rceil$, we obtain
$$I_N(t)\;\leq\;\frac{\lceil aN\rceil}{N}\;+\;\frac{1}{Ns(\lceil aN\rceil)}\sum_{x=\lceil aN\rceil +1}^{N}s(x)\;+\;\frac{1}{\delta\omega t N}\sum_{x=\lceil aN\rceil +1}^{N}s(x)^{\delta}.$$
Noticing that all these terms converge when $N\rightarrow\infty$, we have that for all $N$ and $t\ge0$,
$$I_N(t)\;\leq\;a\;+\;\frac{1}{S(a)}\int_a^1S(x)dx\;+\;\frac{1}{\delta\omega t}\int_a^1S(x)^{\delta}dx+ \eps(N),$$
with $\eps(N)\to 0$, which concludes the result when taking $t$ large enough (note both integrals are finite since $S$ is continuous on $[a,1]$). 

\section{Application to the factor kernel}

In this section we prove the first half of Theorem~\ref{teofinal} applying both Theorem~\ref{teolower} and \ref{teoupper} to the factor kernel $p(x,y)=\beta x^{-\gamma}y^{-\gamma}$. Our proof is structured as follows:
\begin{enumerate}
\item For each of the four strategies in Theorem~\ref{teolower} we find the function $a(\lambda)$ of maximal order satisfying the respective condition given in the theorem.\\[-3mm]
\item We define $a_0(\lambda)$ as the maximum of these functions over the four strategies. This function gives for each  $\lambda$ the definition of the set of stars. Using Theorem \ref{teolower} we derive from it a lower bound of order $\lambda a_0 p(a_0,1) \wedge 1$, or more simply $\lambda a_0(\lambda)^{1-\gamma}$, for the lower metastable density. 
\\[-3mm]
\item We search for a nonincreasing function $S$ and for $a_1=a_1(\lambda) \in[0,1]$ as small as possible such that the following inequality is satisfied for small $\lambda$,
\begin{equation}
\label{int_condS2}
 \lambda T_\lambda(x) \int_0^1 p(x,y) S(y \vee a_1) dy \le D S(x),  \ x\in[a_1,1].
\end{equation}
Observe that this is a continuous version of Inequality~\eqref{condS2}, and actually implies Inequality~\eqref{condS2}, as by the monotonicity of $p$ and $S$, we have, for $i\ge a_1 N$,
$$\lambda T_{\lambda}\left(\sfrac{i}{N}\right)\sum_{j=1}^Np_{i,j}S\left(\sfrac{j \vee \lceil a_1 N\rceil}{N}\right)\leq \lambda T_{\lambda}\left(\sfrac{i}{N}\right)
\left(\sfrac{i}{N}\right)^{-\gamma}\int_0^1S(y\vee a_1)y^{-\gamma}\, dy.$$
If we can take $a_1=0$ in~\eqref{condS2}, then we apply Theorem~\ref{teoupper}, (1), and deduce fast extinction. In the other cases we will always have proven already metastability, and Theorem~\ref{teoupper}, (2) then gives us an upper bound on the upper metastable density~as
$$ \rho^+(\lambda) \le a_1(\lambda) +\frac 1 {S(a_1(\lambda))} \int_{a_1(\lambda)}S(y) dy.$$
Note we have not discussed how to choose the function $S$. This will be further discussed in the examples below.
\end{enumerate}
To avoid cluttered notation we henceforth assume $\beta=1$, which does not affect the results.%

\subsubsection*{The function $a_0(\lambda)$}
\label{fk}

Our aim is to find for each strategy the maximal function satisfying the respective condition in 
Theorem~\ref{teolower}. 
\begin{itemize}
\item {\bf Quick Direct Spreading}: We study the expression
$\lambda ap(a,a)=\lambda a^{1-2\gamma}$ and check whether it is bounded away from zero. If $\gamma\leq 1/2$ this is never satisfied. If $\gamma>1/2$ we impose that the expression be constant 
and obtain $$a(\lambda)=r\lambda^{\frac{1}{2\gamma-1}} \quad \mbox{ for some $r>0$.}$$
\item {\bf Delayed Direct Spreading}: We study the expression
$\lambda Tap(a,a)=\lambda T a^{1-2\gamma}$ with
$T$ as in Theorem~\ref{teolower}.
To facilitate our study we impose $T\rightarrow\infty$ instead of just being large, but this translates into \smash{$T\log^2(T)=C\lambda^2 a^{-\gamma(1-2\eta)}\rightarrow\infty$} which can only occur if $\eta<1/2$. 
To ensure boundedness of the expression from zero we observe that 
$\lambda T a^{1-2\gamma}\;\leq\;\lambda^3a^{1-3\gamma+2\gamma\eta}$
and hence it suffices that $1-3\gamma+2\gamma\eta<0$. Assuming this and $\eta<1/2$ we find $a(\lambda)$ by imposing that $\lambda T a^{1-2\gamma}$ be constant, say equal to $c$,
and deduce from the definition of $T$ that
$$\left[\log(c)-\log\left(\lambda a^{1-2\gamma}\right)\right]^2\;=\;\sfrac{C}{c}\lambda^3a^{1-3\gamma+2\gamma\eta}.$$
Since $T\rightarrow\infty$, the expression on the left goes to infinity, and hence $a$ is of the form $$a(\lambda)=\lambda^{\frac{3}{3\gamma-2\gamma\eta-1}}f(\lambda),$$ 
for some function $f$ going to zero as $\lambda\rightarrow 0$. Replacing this new expression for $a$ we obtain
$[\log(c)-c_1\log(\lambda)-c_2\log(f(\lambda))]^2=\frac{C}{c}f(\lambda)^{1-3\gamma+2\gamma\eta}.$
The expression on the right tends to infinity polynomially in $f$, so the equality holds only if $\log\lambda$ dominates $\log f(\lambda)$ giving $f(\lambda)$ of order \smash{$[-\log(\lambda)]^{\frac{2}{1-3\gamma+2\gamma\eta}}$}. 
We finally get the maximal $a$ of the form
$$a(\lambda)\;=\;r\Big[\frac{\lambda^3}{(\log\lambda)^2}\Big]^{\frac{1}{3\gamma-2\gamma\eta-1}}
\quad \mbox{ for some $r>0$.}$$
\item {\bf Quick Indirect Spreading}: We study the expression
\smash{$\lambda^2 a^{1-\gamma}p(a,1)$}
\smash{$=\lambda^2 a^{1-2\gamma}$}. If $\gamma\leq 1/2$ this inevitably tends to zero and 
otherwise we obtain that the maximal $a$ is of the form
$$a(\lambda)=r\lambda^{\frac{2}{2\gamma-1}}\quad \mbox{ for some $r>0$.}$$
\item {\bf Delayed Indirect Spreading}: In this case, we need to consider the expression
\smash{$\lambda^2 Ta^{1-\gamma}p(a,1)=\lambda^2 T a^{1-2\gamma}$}
whose study is analogous to what was done in the delayed direct spreading case, obtaining that condition~(iv) can only hold when $\eta<1/2$ and $1-3\gamma+2\gamma\eta<0$ and in this case $a$ must be of the form
$$a(\lambda)\;=\;r\left[\frac{\lambda^4}{(-\log(\lambda))^2}\right]^{\frac{1}{3\gamma-2\gamma\eta-1}}\quad \mbox{ for some $r>0$.}$$
\end{itemize}
For the final form of $a_0$, we notice first that on the set
\begin{equation}
\label{regfast}
\Big\{(\gamma,\eta) \colon \gamma\leq \sfrac{1}{3-2\eta},\;\eta\leq \sfrac{1}{2}\;\mbox{ or }\;\gamma\leq \sfrac{1}{2},\;\eta\geq \sfrac{1}{2}\Big\}
\end{equation}
none of the conditions of Theorem~\ref{teolower} hold, so we expect that fast extinction occurs for parameters inside this region. For the construction of $a_0$ on the complement of this set, we note that all of the functions $a$ have the form $\lambda^{\mathfrak{e}'+o(1)}$, and since $\lambda<1$, for each $\lambda$ small the dominant survival strategy (that is, the one which gives the largest lower bound for the density) corresponds to the expression with the smallest exponent. If $\eta<1/2$ this gives 
$$a_0(\lambda)\;=\;\left\{\begin{array}{cl}r\left[\frac{\lambda^3}{(-\log(\lambda))^2}\right]^{\frac{1}{3\gamma-2\gamma\eta-1}}&\mbox{ if }\frac{1}{3-2\eta}<\gamma<\frac{2}{3+2\eta},\\&\\
r\lambda^{\frac{1}{2\gamma-1}}&\mbox{ if }\frac{2}{3+2\eta}<\gamma<1,
\end{array}\right.$$
while in the case $\eta>1/2$ we obtain
$$a_0(\lambda)=r\lambda^{\frac{1}{2\gamma-1}}\;\mbox{ if }\;\gamma>\sfrac{1}{2}.$$
Computing the lower bound for the density as $\lambda a_0^{1-\gamma}$ for the values of the parameter where $a_0$ is defined gives the lower bound in~\eqref{dens1}.

\subsubsection*{The function $S$}

In the case of the factor kernel, Inequality~\eqref{int_condS2} takes the simple form
$$ \lambda T_\lambda(x) x^{-\gamma} \int_0^1 y^{-\gamma} S(y \vee a) dy \le D S(x).
$$
Since the integral does not depend on $x$, a natural choice is to consider the function
$$ S(x)=T_\lambda(x) x^{-\gamma}.$$
This scoring function is also somehow natural, as we now explain. The average degree of $x$ is of order $x^{-\gamma}$ and if $x$ is infected, we should wait on average at most time $T_\lambda(x)$ before $x$ turns to healthy and surrounded by unrevealed edges. More precisely, this should happen roughly at time $\max\{T(x,\lambda),1\}$, where $T(x,\lambda)$ is as in~\eqref{T(a,lambda)}, but this is bounded by $T_\lambda(x)$, and even of the same order, up to logarithmic terms\footnote{The discrepancy between the time-scale function $T_\lambda(x)$ used in Theorem~\ref{teoupper} and that in Theorem~\ref{teolower} explains why, as we will see, the lower bounds for $\rho^-(\lambda)$ and the upper bounds for $\rho^+(\lambda)$ that we get match only up to logarithmic terms. They match indeed up to a constant multiplicative term when $T_\lambda(x)=1$.}. Thus $\lambda S(x)$ should be a reasonable upper bound for the average number of infections sent by vertex $x$ before the first time when it is healthy and surrounded by unrevealed edges (namely the first time $t$ for which $m_t(x)=0$), and thus $S$ seems a reasonable scoring function.
\smallskip

Using this choice of $S$, Inequality~\eqref{int_condS2} becomes
$$ \lambda \int_0^1 y^{-\gamma} S(y \vee a) dy \le D.$$
Using that $T_\lambda(x)\le 1+\lambda^2 x^{-\gamma(1-2\eta)}$, the left hand side can be bounded as follows,
\begin{eqnarray*}
\lambda \int_0^1 y^{-\gamma} S(y \vee a) dy &=& \lambda a^{1-2\gamma} T_\lambda(a)+ \lambda \int_a^1 S(y) y^{-\gamma} dy \\
&\le& \lambda a^{1-2\gamma} T_\lambda(a)+ \lambda^3 \int_a^1 y^{-\gamma(3-2\eta)} dy + \lambda \int_a^1 y^{-2\gamma} dy \\
&\le& \rho (\lambda^3 a^{1-\gamma(3-2\eta)}+\lambda a^{1-2\gamma}+ \lambda),
\end{eqnarray*}
for some constant $\rho>0$ depending only on $\gamma$, $\eta$, under the hypothesis $\gamma\notin\{\sfrac 12, \sfrac 1 {3-2\eta}\}$. Note that in the last inequality we have used that 
\smash{$\int_a^1 y^\iota dy \le \sfrac 1 {|1+\iota|}(1+a^{1+\iota})$} for $\iota\ne -1$, and the upper bound is sharp up to a multiplicative constant. \smallskip

We now consider three different cases.\smallskip 

\noindent
{\bf (1)\ The case $\eta<\frac{1}{2}$ and $\gamma<\frac{1}{3-2\eta}$, or $\eta\ge \frac 12$ and $\gamma<\frac 12$.}

\noindent
Here we can take $a_1=0$. Indeed, we have
$\lambda \int_0^1 y^{-\gamma} S(y) dy \le \rho \lambda$, therefore Inequality~\eqref{int_condS2} is satisfied if $\lambda\le D/\rho$.
Moreover, the function $S$ satisfies $T_{\lambda}(x)\leq S(x)^{\delta}$ for
\begin{itemize}
\item $\delta\geq\frac{1-2\eta}{2-2\eta}$ whenever $0\leq\eta<1/2$,
\item $\delta>0$ in the case $\eta>1/2$.
\end{itemize}
Using Theorem~\ref{teoupper}, (1), we deduce fast extinction for small $\lambda$. More precisely, when $\eta<1/2$, we get \smash{$\E[T_{\rm ext}]\leq \omega'N^{_{\frac{1-2\eta}{2-2\eta}}}$} for some $\omega'<+\infty$. In the case $\eta\ge 1/2$, for every $\delta>0$, there is some $\omega'<+\infty$ such that \smash{$\E[T_{\rm ext}]\leq \omega'N^\delta$}. In particular, the extinction time grows even slower than polynomially.
\smallskip

\noindent
{\bf (2)\ The case $\eta<\frac{1}{2}$ and $\frac{1}{3-2\eta}<\gamma<\frac 2 {3+2\eta}$.}
\smallskip

\noindent
A sufficient condition for Inequality~\eqref{int_condS2} to be satisfied is
$$ \max(\lambda^3 a^{1-\gamma(3-2\eta)},\lambda a^{1-2\gamma}, \lambda)\le D/3\rho.$$
This requires, in particular, that 
$$a\ge a_1(\lambda):= r \lambda^{\sfrac 3 {3\gamma-2\eta-1}},$$
where $r=(D/(3\rho))^{\sfrac 1 {3\gamma-2\eta-1}}<\infty$.
One can check this is also a sufficient condition for small $\lambda$ when $\gamma<\frac 2 {3+2\eta}$.
Applying Theorem~\ref{teoupper}, (2), we deduce that
$$\rho^+(\lambda)\;\leq\; a_1(\lambda)+\frac{1}{S(a_1(\lambda))}\int_{a_1(\lambda)}^1S(y)\, dy.$$
From now on, we write $f(\lambda)\lesssim g(\lambda)$ if the function $g(\lambda)/f(\lambda)$ is bounded from below by a positive constant, and similarly for $\gtrsim$.  One can check the following:
\begin{eqnarray*}
\int_{a_1(\lambda)}^1S(y)\, dy &\lesssim& \int_{a_1(\lambda)}^1 \lambda^2 y^{-\gamma(2-2\eta)}+y^{-\gamma}\, dy \\
&\lesssim& \lambda^2 a_1(\lambda)^{1-2\gamma+2\gamma \eta} +1 
\lesssim \lambda^{\sfrac {1+2\gamma \eta} {3\gamma - 2\gamma \eta -1}}+1 \lesssim 1.
\end{eqnarray*}
This, together with $S(a_1(\lambda))\ge \lambda^2 a_1^{-\gamma(2-2\eta)} \gtrsim \lambda^{- \sfrac {2-2\gamma \eta} {3\gamma-2\gamma \eta-1}}$, gives
$$ \rho^+(\lambda) \le c \lambda^{ \sfrac {2-2\gamma \eta} {3\gamma-2\gamma \eta-1}},$$
for some constant $c<\infty$. Thus we obtain the upper bound \eqref{dens1} in this region.
\smallskip

\noindent
{\bf (3)\ The case $\eta\ge \frac{1}{2}$ and $\gamma>1/2$, or $\eta\ge 1/2$ and $\gamma>1/2$.}
\smallskip

\noindent
Similarly as in the previous case, it suffices for small $\lambda$ to require
$$a\ge a_1(\lambda):= r \lambda^{\sfrac 1 {2\gamma-1}},$$
with $r=(D/(3\rho))^{\sfrac 1 {2\gamma-1}}.$
We have $S(x)=x^{-\gamma}$ for $x\ge a_1(\lambda)$, and Theorem~\ref{teoupper}, (2), yields
$$ \rho^+(\lambda)\le a_1^{\gamma} \int_0^1 y^{-\gamma} dy\le c \lambda^{\sfrac \gamma{2\gamma-1}},$$
for some constant $c<\infty$. This gives again the upper bound~\eqref{dens1} and concludes our study of the metastable densities for the factor kernel.

\addtocontents{toc}{\setcounter{tocdepth}{1}}
\section{Application to the preferential attachment kernel}

In this section we derive results for the preferential attachment kernel given by
$p(x,y)=\beta \min\{x,y\}^{-\gamma}\max\{x,y\}^{\gamma-1}$ following the programme set out at the beginning
of Section~6. Again, we assume $\beta=1$ for simplicity. Taking into account that
$p(a,a)=a^{-1}$ and $p(a,1)=a^{-\gamma}$ straightforward calculations allow us to deduce the values of the maximal order functions $a(\lambda)$ summarised in the table below. The third column gives the conditions needed to define the maximal function~$a(\lambda)$.
\smallskip

\begin{center}
\begin{tabular}{|c|c|c|}
\hline
\hline
\phantom{$^{H^H}_{y_y}$}Strategy\phantom{$^{H^H}_{y_y}$}&$a(\lambda)$&Condition\\
\hline
\hline
\phantom{$^{H^H}_{y_y}$}{\bf Quick Direct Spreading}\phantom{$^{H^H}_{y_y}$}&$--$&$--$\\
\hline
{\bf Delayed Direct Spreading}&$\left(\frac{\lambda^3}{(-\log(\lambda)^2)}\right)^{\frac{1}{\gamma(1-2\eta)}}$&$\eta<1/2$\\
\hline
{\bf Quick Indirect Spreading}&$\lambda^{\frac{2}{2\gamma-1}}$&$\gamma>1/2$\\
\hline
{\bf Delayed Indirect Spreading}&$\left(\frac{\lambda^4}{(-\log(\lambda)^2)}\right)^{\frac{1}{3\gamma-2\gamma\eta-1}}$&$\eta<\frac{1}{2}$ and $\gamma>\frac{1}{3-2\eta}$\\
\hline
\end{tabular}
\end{center}
\medskip

\noindent
Taking the maximum over permissible strategies we deduce $a_0(\lambda)$.
If $\eta<1/2$ we~get
$$a_0(\lambda)\;=\;\left\{\begin{array}{cl}r\left[\frac{\lambda^3}{(-\log(\lambda))^2}\right]^{\frac{1}{\gamma(1-2\eta)}}&\mbox{ if }0<\gamma<\frac{3}{5+2\eta},\\&\\r\left[\frac{\lambda^4}{(-\log(\lambda))^2}\right]^{\frac{1}{3\gamma-2\gamma\eta-1}}&\mbox{ if }\frac{3}{5+2\eta}<\gamma<\frac{1}{1+2\eta},\\&\\
r\lambda^{\frac{2}{2\gamma-1}}&\mbox{ if }\frac{1}{1+2\eta}<\gamma<1.
\end{array}\right.$$
If $\eta>1/2$  and $\gamma>1/2$ we get $a_0(\lambda)=r\lambda^{\frac{2}{2\gamma-1}}$, but if
$\eta>1/2$ and $\gamma<1/2$ none of the  strategies succeed. Calculating $\lambda a_0(\lambda)^{1-\gamma}$ gives the lower bounds as in~\eqref{dens2} and there is slow extinction for all parameters except for the case $\eta>1/2$ and $\gamma<1/2$, as expected. 
\medskip

To get upper bounds in the case of the preferential attachment kernel, 
the choice of a scoring function $S$ is much more delicate.
Our initial approach has been to search for a function $S$ giving equality in~\eqref{int_condS2}, or in the related Fredholm equation of the second kind
\begin{equation}\label{freddie}
\int_{a_1(\lambda)}^1 T_{\lambda}(x)p(x,y)S(y)\, dy\;=\;\sfrac{D}{\lambda}S(x),
\end{equation}
as such an~$S$ is a plausible candidate to give the best possible bounds in~Theorem~\ref{teoupper}. 
To carry out this programme requires extensive calculations with Bessel functions and modified Bessel functions. However, it turns out that relatively crude approximations to these functions also suffice 
and this is the approach we now follow.
\smallskip

\subsubsection*{The upper bound for $\gamma>1/2$.}
We start with the case $\gamma>1/2$, and we define $S$ by 
$$S(x)= T_\lambda(x) (x^{\gamma-1}+ \rho \lambda x^{-\gamma}),$$ 
where $\rho>0$ is a constant to be chosen later. To argue why $S$ may be a ``reasonable scoring function'', it is useful to note that the cardinality of the sets
$\{y\le x \colon y\sim x\}$ and $\{y\le x \colon\exists z\ge x, y\sim z\sim x \}$ are of order $x^{\gamma-1}$ and $x^{-\gamma}$, respectively. Thus, $\lambda S(x)$ might be a reasonable upper bound for the number of other strong vertices a strong vertex $x$ can typically infect, either directly or indirectly, before it totally recovers (namely $m_t(x)=0$).

Using this function, Inequality~\eqref{int_condS2} becomes
$$ \frac \lambda {x^{\gamma-1}+ \rho \lambda x^{-\gamma}} \int_0^1 p(x,y) S(y \vee a) dy \le D.$$
We denote the left hand side by $I(x,a)$, and observe that the hypothesis $\gamma>1/2$ implies
$$p(x,y)\le x^{\gamma-1} y^{-\gamma}+ x^{-\gamma} y^{\gamma-1} \le 2p(x,y).$$
If $x\ge a$, we can bound $I$ using the notation $\alpha=\gamma(1-2\eta)$ as 
\begin{align*}
I(x& ,a)= \frac {\lambda}{x^{\gamma-1}+ \rho \lambda x^{-\gamma}} 
\left[\int_0^a p(x,y) S(a) dy +\int_a^1 p(x,y) T_\lambda(y) (y^{\gamma-1}+ \rho \lambda y^{-\gamma}) \,dy \right] \\
&\le \frac {\lambda}{x^{\gamma-1}+ \rho \lambda x^{-\gamma}} 
\left[x^{\gamma-1}S(a)\int_0^a y^{-\gamma} \,dy+ \int_a^1 p(x,y) (1+ \lambda^2 y^{-\alpha}) (y^{\gamma-1}+ \rho \lambda y^{-\gamma}) \,dy \right] \\
&\le \frac {\lambda}{x^{\gamma-1}+ \rho \lambda x^{-\gamma}} 
\Big[ x^{\gamma-1} \Big(\frac {a^{1-\gamma} S(a)}{1-\gamma}+ \int_a^1 (y^{-1}+ \rho \lambda y^{-2\gamma} + \lambda^2 y^{-1-\alpha}+\rho \lambda^3 y^{-\alpha-2 \gamma}) \, dy \Big)\\
& \qquad \qquad+ x^{-\gamma} 
\int_a^1 (y^{2 \gamma-2}+ \rho \lambda y^{-1} + \lambda^2 y^{2\gamma-2-\alpha}+\rho \lambda^3 y^{-1-\alpha}) \,dy\Big] \\
&\le \lambda \Big(\frac {a^{1-\gamma} S(a)}{1-\gamma}+\int_a^1 (y^{-1}+ \rho \lambda y^{-2\gamma} + \lambda^2 y^{-1-\alpha}+\rho \lambda^3 y^{-\alpha-2 \gamma}) \,dy \Big)\\
&\qquad \qquad
+ \frac 1 \rho \int_a^1 (y^{2 \gamma-2}+ \rho \lambda y^{-1} + \lambda^2 y^{2\gamma-2-\alpha}+\rho \lambda^3 y^{-1-\alpha})\, dy,
\end{align*}
At this point, we observe that the bounds we used are tight up to a multiplicative constant. Indeed, replacing a ``max'' by a sum can multiply the result by 2 at worst. The last inequality is tight because taking $x=1$ gives at least $1/(1+\rho \lambda)\ge 1/2$ times the first term (for $\lambda<1/\rho$), while taking $x=a$ gives at least $1/2$ times the second term, if we further suppose
\smash{$\rho \lambda a^{-\gamma}>a^{\gamma-1}$} or 
\smash{$a\le (\rho \lambda)^{\frac 1 {2 \gamma -1}}$} (one can check \emph{a posteriori} that $a_1$ below will always satisfy this property).\smallskip

For simplicity\footnote{We get an additional factor $\log a$ when the exponent is $-1$. However, the reader can check this actually never concerns the leading term, so our results also hold when one of the exponents equals $-1$.}, we now suppose that the exponents in the integrals are different from~$-1$, and use again the inequality \smash{$\int_a^1 y^{\iota} dy \le \frac 1 {|1+\iota|} (a^{1+\iota}+1)$} for $\iota\ne -1$. This allows to give a relatively simple upper bound for $I(x,a)$ (again tight up to a multiplicative constant) as
$$
I(x,a) \lesssim  \frac 1 \rho+\rho \lambda^2+\lambda |\log a|+  \rho \lambda^2 a^{1-2\gamma}+ \lambda^3 a^{-\alpha} + \rho \lambda^4 a^{1-\alpha-2\gamma}+ \frac 1 \rho a^{2\gamma-1}+ \frac 1 \rho \lambda^2 a^{2\gamma-1 - \alpha}.
$$

We want this to be smaller than $D$. To this end, we now fix $\rho=2/D$, so the first term is smaller than $D/2$. Now, we can ensure $I(x,a)$ is smaller than $D$ by requesting
\begin{itemize}
\item $a> r \lambda^{3/\alpha}$ with $r$ large,
\item $a> r \lambda^{2/(2\gamma-1)}$ with $r$ large,
\item $a> r \lambda^{4/(2\gamma+\alpha-1)}$ with $r$ large,
\item $a> r \lambda^{2/(\alpha+1-2 \gamma)}$ with $r$ large (only in the case $\alpha+1-2 \gamma>0$).
\end{itemize}
We then choose $a_1=a_1(\lambda)$ the smallest value making all these requests satisfied. After some more computations, these give 
\begin{itemize}
\item $a_1(\lambda)= r\lambda^{3/\alpha}$ in the case $1/2<\gamma<3/(5+2\eta)$,
\item $a_1(\lambda)= r\lambda^{4/(2\gamma+\alpha-1)}$ in the case $3/(5+2\eta)<\gamma<1/(1+2\eta)$,
\item $a_1(\lambda)= r\lambda^{2/(2\gamma-1)}$ in the case $\gamma>1/(1+2\eta)$.
\end{itemize}
Theorem~\ref{teoupper}, (2), now gives the upper bound for the upper metastable density
$$ \rho^+(\lambda)\le a_1(\lambda) +\frac 1 {S(a_1(\lambda))} \int_{a_1(\lambda)}^1 S(y) \, dy.$$
The reader can check that in all three cases, $S$ is integrable and the integral gives a constant term, while $S(a_1(\lambda))^{-1}$ is of same order as $\lambda a_1^{1-\gamma}$. Actually the expression $\lambda a^{1-\gamma} S(a)$ appears in the upper bound of $I(1,a)$, and the choice of $a_1$ made it small, but of constant order. Finally we get an upper bound of order $\lambda a_1^{1-\gamma}$, which matches~\eqref{dens2}.

\subsubsection*{The upper bound for $\gamma<1/2$ and $\eta<1/2$.}

Here, we define the scoring function~$S$~by
$$ S(x)=(x^{-\gamma}+ \lambda x^{\gamma-1}) T_\lambda(x).$$
and avoid to use the inequality $p(x,y)\le x^{\gamma-1} y^{-\gamma}+ x^{-\gamma} y^{\gamma-1} $, as it is not sharp anymore. Now Inequality~\eqref{int_condS2} is equivalent to the inequality $I(x,a)\le D$ for $x\ge a$, where
\begin{eqnarray*}
 I(x,a)&:=&\frac \lambda { x^{-\gamma}+\lambda x^{\gamma-1}} \left(\int_0^x y^{-\gamma} x^{\gamma-1} S(y \vee a) \, dy + \int_x^1 x^{-\gamma}y^{\gamma-1} S(y) \, dy\right)\\
 &\le& \frac {\lambda x^{\gamma-1}} { x^{-\gamma}+\lambda x^{\gamma-1}} 
 \Big( \frac {a^{1-\gamma}S(a)}{1-\gamma} + \int_a^x (y^{-2\gamma}+\lambda y^{-1}+\lambda^2 y^{-2\gamma-\alpha}+\lambda^3 y^{-1-\alpha}) \, dy\Big) \\
 &&\qquad \qquad + \frac {\lambda x^{-\gamma}} { x^{-\gamma}+\lambda x^{\gamma-1}} 
 \int_x^1 (y^{-1}+\lambda y^{2\gamma -2} + \lambda^2 y^{-1-\alpha} +\lambda^3 y^{2\gamma-2-\alpha} ) \, dy\\
 &\le& I_1(x,a)+I_2(x).
\end{eqnarray*}
We again suppose for simplicity that the exponents do not equal $-1$, and after tedious but straightforward calculations we obtain the following simple upper bounds for $x\in[a,1]$,
\begin{eqnarray*}
I_1(x,a) &\lesssim& \lambda |\log a|+ \lambda^3 a^{-\alpha}+\lambda^2 a^{1-2\gamma - \alpha}, \\
I_2(x) ) &\lesssim& \lambda |\log x|+ \lambda^3 x^{-\alpha}.
\end{eqnarray*}
For example, one of the terms of $I_2(x)$ is 
\begin{eqnarray*}
 \frac {\lambda x^{-\gamma}} { x^{-\gamma}+\lambda x^{\gamma-1}}
  \int_x^1 \lambda^3 y^{2\gamma-2-\alpha}  dy
  &\lesssim& 
   \frac {\lambda x^{-\gamma}} { x^{-\gamma}+\lambda x^{\gamma-1}} \left(\lambda^3 + \lambda^3 x^{2\gamma-1-\alpha}\right)\\
   &\lesssim& 
  \lambda^4 \frac { x^{-\gamma}} { x^{-\gamma}+\lambda x^{\gamma-1}} +
    \lambda^3 x^{-\alpha} \frac { \lambda x^{\gamma-1}} { x^{-\gamma}+\lambda x^{\gamma-1}} \\
&\lesssim& \lambda^4+ \lambda^3 x^{-\alpha} \lesssim  \lambda |\log x|+ \lambda^3 x^{-\alpha},
\end{eqnarray*}
and we bound the other terms similarly. As in the case $\gamma>1/2$ we search for the minimal value making $I_1$ and $I_2$ small, and find that in the region $\gamma<1/2$, $\eta<1/2$, we can always take $a_1(\lambda)= r \lambda^{3/\alpha}$.
Finally, as in the case $\gamma>1/2$, we obtain the upper bound
$$ \rho^+(\lambda)\le a_1(\lambda)+ \frac 1 {S(a_1(\lambda))}\int_{a_1(\lambda)}^1 S(y) \, dy.$$
Again, the integral is of constant order, while $S(a_1(\lambda))^{-1}$ is of same order as $\lambda a_1(\lambda)^{1-\gamma}$, yielding an upper bound matching~\eqref{dens2}.

\subsubsection*{The upper bound for $\gamma<1/2$ and $\eta>1/2$.}

In this case, none of the scoring functions introduced before enable us to prove slow extinction. Besides, the time-scale function is simply $T_\lambda(x)=1$. We define the scoring function $S(x)=x^{-\gamma'}$, with $\gamma'\in(\gamma, 1-\gamma)$. Inequality~\eqref{int_condS2} for $a_1=0$ becomes
$$ \lambda\int_0^1 p(x,y) y^{-\gamma'} dy \le D x^{-\gamma'}.$$
But simple calculations give
\begin{eqnarray*}
\lambda\int_0^1 p(x,y) y^{-\gamma'} dy &=&\lambda x^{\gamma-1} \int_0^x y^{-\gamma-\gamma'} dy + \lambda x^{-\gamma} \int_x^1 y^{\gamma-1-\gamma'} dy\\
&\le& \lambda x^{-\gamma'}\Big(\frac 1{1-\gamma-\gamma'} + \frac 1 {\gamma'-\gamma}\Big),
\end{eqnarray*}
thus Inequality~\eqref{int_condS2} is satisfied for small $\lambda$. Moreover $T_\lambda(x)=1$ satisfies~\eqref{defiT} for every $\delta>0$, thus we have fast extinction, and the expected extinction time grows subpolynomially.
This completes our analysis for the preferential attachment kernel.

\addtocontents{toc}{\setcounter{tocdepth}{2}}
\section{Conclusions}

We have investigated the effect of fast network dynamics on the behaviour of the contact process on scale-free networks modelled as inhomogeneous random network with suitable connection kernels. The stationary network dynamics consists of vertices updating their neighbourhoods independently of the contact process. Variation of a parameter~$\eta$, which controls the rate at which the most powerful vertices update, allows an interpolation between a scenario where vertices update on the time-scale of the contact process ($\eta=0$) and a mean-field model where updates occur on a time scale of much faster order ($\eta\uparrow\infty$). 
We develop general techniques to study the behaviour of the extinction time and metastable densities at small infection rates for this class of models. Lower bounds are based on the identification of four core survival strategies for the contact process, and upper bounds are using coupling and supermartingale techniques. 
\smallskip

Our focus is on two paradigmatic connection kernels, the factor kernel and the preferential attachment kernel, which exhibit very different behaviour.  For the factor kernel we identify a phase transition between fast and slow extinction, and, in case $\eta<\frac12$, a further transition within the slow extinction phase between two types of metastable densities. For the preferential attachment kernel a phase transition between fast and slow extinction only occurs when $\eta>\frac12$. For $\eta<\frac12$ we always have slow extinction and two phase transitions in the behaviour of the metastable densities. The analytical work necessary in the case of the preferential attachment kernel is particularly delicate.
\smallskip

In a future paper we will discuss slowly evolving networks. This will include updating edges individually as well as vertex updating in the case where the most powerful vertices update very slowly ($\eta<0$) and will allow us to interpolate between a scenario where vertices update on the time-scale of the contact process ($\eta=0$) and the case of static networks ($\eta \downarrow -\infty$). The mathematical problems emerging in this work will require, in part, significantly different methods from those explained in this paper.
\pagebreak[3]
\bigskip

\pagebreak[3]

\noindent {\bf Acknowledgements:} \emph{EJ was supported by CNRS and by the LABEX MILYON (ANR-10-LABX-0070) of Universit\'e de Lyon, PM was supported by EPSRC grant EP/K016075/1 and AL was supported by the CONICYT-PCHA/Doctorado nacional/2014-21141160 scholarship. Part of this work was completed when AL was visiting PM at the University of Bath for six months in 2016/17. We would like to thank Remco van der Hofstad for the suggestion to investigate degree dependent update rates.}

\pagebreak[3]

\bigskip


{\footnotesize
\noindent
{\bf Emmanuel Jacob}, Ecole Normale Sup\'erieure de Lyon, Unit\'e de Math\'ematiques Pures et Appliqu\'ees,  UMR CNRS 5669 , 46, All\'ee d'Italie, 69364 Lyon Cedex 07, France

\noindent
{\bf Amitai Linker}, Universidad de Chile,  Department of Mathematical Engineering, Av.~Beauchef 851, piso~5, Santiago, Chile.

\noindent
{\bf Peter M\"orters}, Universit\"at zu K\"oln,  Mathematisches Institut, Weyertal 86--90, 50931~K\"oln, Germany.}

\end{document}